\newtheorem*{thma}{Theorem~A}
\newtheorem*{thmb}{Theorem~B}
\newtheorem*{thmc}{Theorem~C}
\newtheorem{thm}{Theorem}[section]
\newtheorem{fact}[thm]{Fact}
\newtheorem{lemma}[thm]{Lemma}
\newtheorem{cor}[thm]{Corollary}
\newtheorem{prop}[thm]{Proposition}
\newtheorem{claim}{Claim}[thm]
\newtheorem{subclaim}{Subclaim}[claim]
\theoremstyle{definition}
\newtheorem{defn}[thm]{Definition}
\newtheorem{convention}[thm]{Convention}
\newtheorem{subdefn}{Definition}[claim]
\theoremstyle{remark}
\newtheorem{remark}[thm]{Remark}
\newtheorem*{uremark}{Remark}
\newtheorem{question}[thm]{Question}
\DeclareMathOperator{\trcl}{trcl}
\DeclareMathOperator{\clps}{clps}
\DeclareMathOperator{\reflexive}{Reflexive}
\DeclareMathOperator{\transitive}{Transitive}
\DeclareMathOperator{\baire}{fnc}
\DeclareMathOperator{\ord}{OR}
\DeclareMathOperator{\cf}{cf}
\DeclareMathOperator{\otp}{otp}
\DeclareMathOperator{\acc}{acc}
\DeclareMathOperator{\dom}{dom}
\DeclareMathOperator{\rng}{Im}
\DeclareMathOperator{\Seq}{Lev}
\DeclareMathOperator{\cof}{cof}
\DeclareMathOperator{\pr}{pr}
\DeclareMathOperator{\dl}{Dl}
\renewcommand{\mid}{\mathrel{|}\allowbreak}
\renewcommand{\restriction}{\mathbin\upharpoonright}
\newcommand{\s}{\subseteq}
\newcommand{\br}{\blacktriangleright}
\newcommand{\diagonal}{\bigtriangleup}
\newcommand*{\axiomfont}[1]{\textsf{\textup{#1}}}
\newcommand{\gj}{\axiomfont{GJ}}
\newcommand{\zf}{\axiomfont{ZF}}
\newcommand{\gch}{\axiomfont{GCH}}
\newcommand{\lcc}{\axiomfont{LCC}}
\newcommand{\redulo}{\hookrightarrow_1}
\newcommand*{\sq}[1]{\mathrel{\le^{#1}}}
\newcommand*{\sqc}[1]{\mathrel{\subseteq^{#1}}}
\title{Inclusion modulo nonstationary}
\author{Gabriel Fernandes}
\author{Miguel Moreno}
\author{Assaf Rinot}
\address{Department of Mathematics, Bar-Ilan University, Ramat-Gan 5290002, Israel.}
\urladdr{http://u.math.biu.ac.il/\textasciitilde zanettg}
\urladdr{http://miguelmath.com/}
\urladdr{http://www.assafrinot.com}
\subjclass[2010]{Primary 03E35. Secondary 03E45, 54H05.}
\keywords{Universal order, nonstationary ideal, diamond sharp, local club condensation, higher Baire space.}
\begin{document}
\begin{abstract} A classical theorem of Hechler asserts that the structure $\left(\omega^\omega,\le^*\right)$ is universal in the sense that
for any $\sigma$-directed poset $\mathbb P$ with no maximal element, there is a $ccc$ forcing extension in which
$\left(\omega^\omega,\le^*\right)$ contains a cofinal order-isomorphic copy of $\mathbb P$.
In this paper, we prove a consistency result concerning the universality of the higher analogue $\left(\kappa^\kappa,\le^S\right)$.

\medskip

\textbf{Theorem.} Assume $\gch$. For every regular uncountable cardinal $\kappa$,
there is a cofinality-preserving $\gch$-preserving forcing extension
in which for every analytic quasi-order $\mathbb Q$ over $\kappa^\kappa$
and every stationary subset $S$ of $\kappa$,
there is a Lipschitz map reducing $\mathbb Q$ to $(\kappa^\kappa,\le^S)$.
\end{abstract}

\maketitle

\section{Introduction}

Recall that a \emph{quasi-order} is a binary relation which is reflexive and transitive.
A well-studied quasi-order over the Baire space $\mathbb N^{\mathbb N}$ is the binary relation $\le^*$ which is defined
by letting, for any two elements $\eta:\mathbb N\rightarrow\mathbb N$ and $\xi:\mathbb N\rightarrow\mathbb N$,
$$\eta\le^*\xi\text{ iff }\{n\in\mathbb N\mid \eta(n)>\xi(n)\}\text{ is finite}.$$

This quasi-order is behind the definitions of cardinal invariants $\mathfrak b$  and $\mathfrak d$ (see~\cite[\S2]{MR2768685}),
and serves as a key to the analysis of \emph{oscillation of real numbers} which is known to have prolific applications to topology, graph theory, and forcing axioms (see~\cite{MR980949}).
By a classical theorem of Hechler \cite{MR0360266}, the structure $(\mathbb N^{\mathbb N},\le^*)$ is universal
in that sense that for any $\sigma$-directed poset $\mathbb P$ with no maximal element, there is a $ccc$ forcing extension in which
$(\mathbb N^{\mathbb N},\le^*)$ contains a cofinal order-isomorphic copy of $\mathbb P$.

In this paper, we consider (a refinement of) the higher analogue of the relation $\le^*$ to the realm of the generalized Baire space $\kappa^\kappa$ (sometimes refered as the higher Baire space),
where  $\kappa$ is a regular uncountable cardinal. This is done by simply replacing the ideal of finite sets with the ideal of nonstationary sets, as follows.\footnote{A comparison of the generalization considered here with the one obtained
by replacing the ideal of finite sets with the ideal of bounded sets may be found in \cite[\S8]{MR1355135}.}

\begin{defn}
Given a stationary subset $S\s\kappa$, we define a quasi-order $\sq S$ over $\kappa^\kappa$
by letting, for any two elements $\eta:\kappa\rightarrow\kappa$ and $\xi:\kappa\rightarrow\kappa$,
$$\eta\sq S\xi\text{ iff }\{\alpha\in S\mid \eta(\alpha)>\xi(\alpha)\}\text{ is nonstationary}.$$
\end{defn}

Note that since the nonstationary ideal over $S$ is $\sigma$-closed, the quasi-order $\le^S$ is well-founded,
meaning that we can assign a \emph{rank} value $\|\eta\|$ to each element $\eta$ of $\kappa^\kappa$.
The utility of this approach is demonstrated in the celebrated work of Galvin and Hajnal \cite{MR0376359} concerning the behavior of the power function over the singular cardinals,
and, of course, plays an important role in Shelah's \textit{pcf theory} (see~\cite[\S4]{MR2768693}).
It was also demonstrated to be useful in the study of partition relations of singular cardinals of uncountable cofinality \cite{MR2494318}.

\medskip

In this paper, we first address the question of how $\sq S$ compares with $\sq{S'}$ for various subsets $S$ and $S'$. It is proved:

\begin{thma} Suppose that $\kappa$ is a regular uncountable cardinal and $\gch$ holds.
Then there exists a cofinality-preserving $\gch$-preserving forcing extension in which for all stationary subsets $S,S'$ of $\kappa$,
there exists a map $f:\kappa^{\le\kappa}\rightarrow2^{\le\kappa}$ such that, for all $\eta,\xi\in\kappa^{\le\kappa}$,
\begin{itemize}
\item[$\bullet$] $\dom(f(\eta))=\dom(\eta)$;
\item[$\bullet$] if $\eta\s\xi$, then $f(\eta)\s f(\xi)$;
\item[$\bullet$] if $\dom(\eta)=\dom(\xi)=\kappa$, then  $\eta\sq S\xi$ iff $f(\eta)\sq{S'}f(\xi)$.
\end{itemize}
\end{thma}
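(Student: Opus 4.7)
Starting from a model of $\gch$, I would carry out a ${<}\kappa$-support iteration of length $\kappa^{++}$ of ${<}\kappa$-closed, $\kappa^+$-cc forcings of size $\kappa^+$, with book-keeping that enumerates every pair $(S,S')$ of (names for) stationary subsets of $\kappa$. At the stage assigned to $(S,S')$, the generic object is a regressive function $\alpha^*\colon S'\to S$ together with a threshold function $g\colon S'\to\kappa$ satisfying $g(\gamma)<\alpha^*(\gamma)$, coupled to a diamond-sharp-style guessing sequence. Standard closure and chain-condition arguments (in line with the paper's keyword list) should show that cofinalities and $\gch$ survive.

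\textbf{Properties of $(\alpha^*,g)$ in the extension.} For each pair $(S,S')$, I want the generically added $(\alpha^*,g)$ to satisfy:
\begin{enumerate}
\item[(a)] for every club $C\subseteq\kappa$, the set $\{\gamma\in S'\mid \alpha^*(\gamma)\notin C\}$ is nonstationary;
\item[(b)] for all $\eta,\xi\in\kappa^\kappa$, if $E:=\{\alpha\in S\mid \eta(\alpha)>\xi(\alpha)\}$ is stationary in $\kappa$, then so is $\{\gamma\in S'\mid \xi(\alpha^*(\gamma))\le g(\gamma)<\eta(\alpha^*(\gamma))\}$.
\end{enumerate}

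\textbf{Definition of $f$ and verification.} Given such a pair $(\alpha^*,g)$ for $(S,S')$, define, for $\eta\in\kappa^{\le\kappa}$ and $\gamma<\dom(\eta)$,
\[
f(\eta)(\gamma)=\begin{cases}1 & \text{if }\gamma\in S'\text{ and }\eta(\alpha^*(\gamma))>g(\gamma),\\ 0 & \text{otherwise.}\end{cases}
\]
Since $\alpha^*$ is regressive, $f(\eta)(\gamma)$ depends only on $\eta\restriction(\gamma+1)$, so $f$ is both domain- and inclusion-preserving. A direct computation yields
$\{\gamma\in S'\mid f(\eta)(\gamma)>f(\xi)(\gamma)\}=\{\gamma\in S'\mid \xi(\alpha^*(\gamma))\le g(\gamma)<\eta(\alpha^*(\gamma))\}$. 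If $\eta\sq S\xi$, pick a club $C$ witnessing it; the right-hand set is contained in $\{\gamma\in S'\mid \alpha^*(\gamma)\notin C\}$, which is nonstationary by (a). Conversely, if $\eta\not\sq S\xi$, the right-hand set is stationary by (b). This gives $\eta\sq S\xi\iff f(\eta)\sq{S'}f(\xi)$, completing the construction.

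\textbf{Main obstacle.} The crux is property (b): a single $g$ must hit the interval $[\xi(\alpha^*(\gamma)),\eta(\alpha^*(\gamma)))$ for stationary many $\gamma$, simultaneously for every pair $(\eta,\xi)$ appearing in the extension. This calls for a guessing principle for the \emph{values} of arbitrary $\kappa\to\kappa$ functions, not merely for initial segments, and is where I expect the diamond-sharp / local-club-condensation machinery signalled by the keywords to be essential. Arranging this while also maintaining property (a), preserving $\gch$ and cofinalities throughout a $\kappa^{++}$-length iteration, and book-keeping pairs $(S,S')$ whose names can appear arbitrarily late, constitutes the main technical engineering.
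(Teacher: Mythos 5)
Your approach diverges substantially from the paper's, and as written it has a genuine gap that you yourself flag as "the main obstacle" but do not resolve. The paper does not iterate over pairs $(S,S')$: it forces \emph{once} to obtain a nice filtration of $H_{\kappa^+}$ witnessing local club condensation (together with a $\Delta_1$-definable well-ordering and $\gch$), and then proves (Theorem~\ref{diamond_from_lcc}) that this single object yields $\dl^*_S(\Pi^1_2)$ simultaneously for \emph{every} stationary $S\s\kappa$. Theorem~A is then derived from the more general Theorem~\ref{Sigmacompl}: $\dl^*_{S'}(\Pi^1_2)$ yields a $1$-Lipschitz map reducing the analytic quasi-order $\le^S$ to $\sqc{S'}$, and the $1$-Lipschitz property is exactly what allows one to restrict the map to initial segments and obtain a domain- and inclusion-preserving $f:\kappa^{\le\kappa}\to 2^{\le\kappa}$. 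So the paper's map $f(\eta)(\alpha)$ is computed by asking whether a $\Pi^1_2$-formula involving $\eta\restriction\alpha$ reflects inside a guessed structure $N_\alpha$; your map is computed from a single value $\eta(\alpha^*(\gamma))$ compared against a fixed threshold $g(\gamma)$.

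The concrete problems with your plan are the following. First, property~(b) is precisely the content you would need to prove and you only gesture at it; this is not a routine lemma but the entire mathematical content. Second, and more structurally, your iteration cannot deliver~(b) even in principle as described: the pair $(\alpha^*,g)$ is committed at the stage $\beta$ of the iteration where the book-keeping treats $(S,S')$, but the quantifier in~(b) ranges over \emph{all} $\eta,\xi\in\kappa^\kappa$ of the final model, including those introduced at stages $\delta>\beta$. A threshold function $g$ fixed at stage $\beta$ has no access to a generic $\eta$ added later, and ${<}\kappa$-closure works against you here (it prevents the later forcing from being trivial on $\kappa^\kappa$ while also preventing $g$ from being re-adjusted unboundedly often). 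The paper sidesteps this entirely: it forces one combinatorial object that is "second-order complete" over $H_{\kappa^+}$ of the final model, so every stationary $S$ and every pair $\eta,\xi$ is automatically covered. If you want to salvage an $(\alpha^*,g)$-style reduction you would need $g$ to satisfy a $\Pi^1_2$-type reflection or a very strong function-value-guessing principle in the final model, not just at the stage where it is added --- which is in effect re-deriving the machinery the paper builds through $\dl^*_S(\Pi^1_2)$. Finally, note that the paper's route buys you much more: Theorem~B handles arbitrary analytic quasi-orders, not just the relations $\le^S$, at no extra cost; your construction is bespoke to $\le^S$.
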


Note that as $\rng(f\restriction\kappa^\kappa)\s 2^\kappa$, the above assertion is non-trivial even in the case $S=S'=\kappa$,
and forms a contribution to the study of lossless encoding of substructures of  $(\kappa^{\le\kappa},\ldots)$ as substructures of $(2^{\le\kappa},\ldots)$ (see, e.g., \cite[Appendix]{paper20}).

\medskip

To formulate our next result --- an optimal strengthening of Theorem~A ---
let us recall a few basic notions from generalized descriptive set theory.
\emph{The generalized Baire space} is the set $\kappa^\kappa$ endowed with
the \emph{bounded topology}, in which a basic open set takes the form
$[\zeta]:=\{\eta\in \kappa^\kappa \mid \zeta\subset \eta\}$, with $\zeta$, an element of $\kappa^{<\kappa}$.
A subset $F\s\kappa^\kappa$ is \emph{closed} iff its complement is open iff there exists a tree $T\s\kappa^{<\kappa}$ such that
$[T]:=\{ \eta\in\kappa^\kappa\mid\forall\alpha<\kappa(\eta\restriction\alpha\in T)\}$ is equal to $F$.
A subset $A\s\kappa^\kappa$ is \emph{analytic} iff there is a closed subset $F$ of the product space $\kappa^\kappa\times \kappa^\kappa$
such that its projection $\pr(F):=\{\eta\in\kappa^\kappa\mid \exists\xi\in\kappa^\kappa~(\eta,\xi)\in F\}$ is equal to $A$.
\emph{The generalized Cantor space} is the subspace $2^\kappa$ of $\kappa^\kappa$ endowed with the induced topology.
The notions of open, closed and analytic subsets of $2^\kappa$, $2^\kappa\times2^\kappa$ and  $\kappa^\kappa\times\kappa^\kappa$
are then defined in the obvious way.

\begin{defn}
The restriction of the quasi-order $\sq S$ to $2^\kappa$ is denoted by $\sqc S$.
\end{defn}

For all $\eta,\xi\in\kappa^\kappa$, denote $\Delta(\eta,\xi):=\min(\{\alpha<\kappa\mid \eta(\alpha)\neq\xi(\alpha)\}\cup\{\kappa\})$.

\begin{defn}
Let $R_1$ and $R_2$ be binary relations over $X_1,X_2\in \{2^\kappa, \kappa^\kappa\}$, respectively.
A function $f:X_1\rightarrow X_2$ is said to be:
\begin{enumerate}[(a)]
\item a \emph{reduction of $R_1$ to $R_2$} iff, for all $\eta,\xi\in X_1$, $$\eta\mathrel{R_1}\xi\text{ iff }f(\eta)\mathrel{R_2}f(\xi).$$
\item \emph{$1$-Lipschitz} iff for all $\eta,\xi\in X_1$,  $$\Delta(\eta,\xi)\le\Delta(f(\eta),f(\xi)).$$
\end{enumerate}

The existence of a function $f$ satisfying (a) and (b) is  denoted by ${R_1}\redulo{R_2}$.
\end{defn}

In the above language, Theorem~A provides a model in which, for all stationary subsets $S,S'$ of $\kappa$,
${\sq S}\redulo{\sqc S'}$.
As $\sq{S}$ is an analytic quasi-order over $\kappa^\kappa$,
it is natural to ask whether a stronger universality result is possible,
namely, whether it is forceable that \emph{any} analytic quasi-order over $\kappa^\kappa$ admits a $1$-Lipschitz reduction to $\sqc{S'}$ for some (or maybe even for all) stationary $S'\s\kappa$.
The answer turns out to be affirmative, hence the choice of the title of this paper.

\begin{thmb} Suppose that $\kappa$ is a regular uncountable cardinal and $\gch$ holds.
Then there exists a cofinality-preserving $\gch$-preserving forcing extension
in which, for every analytic quasi-order $Q$ over $\kappa^\kappa$
and every stationary $S\s\kappa$, ${Q}\redulo{\sqc S}$.
\end{thmb}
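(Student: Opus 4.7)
The plan is to reduce Theorem~B to a single-stationary-set statement via Theorem~A, and then establish that statement using a guessing principle in the forcing extension. For the first step, suppose that in the forcing extension we can show: for every analytic quasi-order $Q$ over $\kappa^\kappa$, there is a stationary $S_0\s\kappa$ and a $1$-Lipschitz reduction $g:\kappa^\kappa\to 2^\kappa$ of $Q$ to $\sqc{S_0}$. Given a target stationary $S\s\kappa$, apply Theorem~A to the pair $(S_0,S)$ to obtain $f:\kappa^{\le\kappa}\to 2^{\le\kappa}$. The domain and monotonicity clauses of Theorem~A together imply that $f\restriction\kappa^\kappa$ is $1$-Lipschitz, since $\Delta(\eta,\xi)=\alpha$ forces $f(\eta)$ and $f(\xi)$ to share the common initial segment $f(\eta\restriction\alpha)=f(\xi\restriction\alpha)$ of length $\alpha$. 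As $g$ lands in $2^\kappa\s\kappa^\kappa$, the composition $f\circ g:\kappa^\kappa\to 2^\kappa$ is a $1$-Lipschitz reduction of $Q$ to $\sqc S$.

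For the second step, fix an analytic $Q=\pr([T])$ for a tree $T\s(\kappa^3)^{<\kappa}$, so that $\eta\mathrel Q\xi$ iff the derived tree $T_{\eta,\xi}:=\{\zeta\in\kappa^{<\kappa}\mid(\eta\restriction|\zeta|,\xi\restriction|\zeta|,\zeta)\in T\}$ admits a $\kappa$-branch. The forcing construction underlying Theorem~A, via local club condensation, supplies in the extension a diamond-sharp sequence on some stationary $S_0$, predicting at each $\alpha\in S_0$ a sufficiently rich collection of candidate triples in $(\kappa^\alpha)^3$. Define $g:\kappa^\kappa\to 2^\kappa$ by transfinite recursion so that $g(\eta)(\alpha)$ depends only on $\eta\restriction\alpha$ and on the guess at $\alpha$; this makes $g$ automatically $1$-Lipschitz. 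The recursion is calibrated so that at levels $\alpha\in S_0$ where the guess detects an exit from $T$ relative to $\eta$ paired with a candidate partner $\xi$, the value $g(\eta)(\alpha)$ is set to $1$ and the analogous bit for the partner to $0$; at all other levels the values coincide. When $\eta\mathrel Q\xi$, a $Q$-witness $\zeta$ keeps every triple $(\eta\restriction\alpha,\xi\restriction\alpha,\zeta\restriction\alpha)$ inside $T$, confining the asymmetric pattern to a non-stationary set. When $\neg(\eta\mathrel Q\xi)$, the well-foundedness of $T_{\eta,\xi}$ combined with the strength of the diamond-sharp sequence produces stationarily many $\alpha$ at which the guess aligns with the truth of $\eta,\xi$ and catches an exit from $T_{\eta,\xi}$, yielding the required stationary asymmetric set.

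The main obstacle lies in reconciling the locality of $g$ with the nonlocal nature of $Q$-membership, coupled with the need for a sufficiently strong guessing principle. The bit $g(\eta)(\alpha)$ must depend only on $\eta\restriction\alpha$, yet must encode something about $\eta$'s global relationship with arbitrary partners. This forces an encoding of tree data directly into the guess, and demands a diamond-type principle strictly stronger than $\Diamond_{S_0}$: the ``sharpness'' provided by local club condensation must ensure that, for every pair $\eta,\xi$ with $\neg(\eta\mathrel Q\xi)$, every potential branch-route out of $T_{\eta,\xi}$ is encountered stationarily often by the guess. The technical heart of the argument is verifying that the forcing of Theorem~A does produce a diamond-sharp sequence with exactly this strength, and that the recursive construction of $g$ correctly translates the guess into the desired reduction, uniformly in the analytic quasi-order~$Q$.
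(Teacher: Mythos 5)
Your high-level architecture---force a local club condensation hypothesis, extract a guessing principle on stationary sets, and use it to build a reduction $g$ whose $\alpha$-th bit depends only on $\eta\restriction\alpha$---is indeed the paper's strategy, and you correctly identify the central tension: $g(\eta)(\alpha)$ must be local yet encode enough about $\eta$'s global $Q$-relations with \emph{every} possible partner. However, the proposal stops short precisely where the argument gets hard.

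The reduction to a single $S_0$ via Theorem~A is circular as written. In the paper, Theorem~A is a \emph{consequence} of Theorem~B (applied to the analytic quasi-order $Q={\sq S}$), not an independently proved stepping stone; you would still have to build the tree-level encoding from scratch. More seriously, the description of $g$ does not pin down anything verifiable. The phrase ``the guess detects an exit from $T$ relative to $\eta$ paired with a candidate partner $\xi$'' conflates a genuinely $\Pi^1_1$ assertion (well-foundedness of the derived tree $T_{\eta,\xi}$) with something a single level $\alpha$ could certify, and leaves unexplained how one bit of $g(\eta)$ can be correct \emph{simultaneously} against all partners $\xi'$. The paper resolves this with three interlocking ideas absent from the proposal: (i)~a transversal lemma (Proposition~\ref{Prop2.4}), extracting from the guessing sequence a transversal $\langle\eta_\alpha\rangle\in\prod_{\alpha\in S}N_\alpha$ such that, whenever a $\Pi^1_2$ sentence holds at $\kappa$, there are stationarily many reflection points $\alpha$ at which moreover $\eta_\alpha=\eta\restriction\alpha$; (ii)~an encoding of ``$Q$ is reflexive'', ``$Q$ is transitive'', and ``$(\eta,\xi)\in Q$'' as $\Sigma^1_1$/$\Pi^1_2$ sentences over $\langle\kappa,{\in},A\rangle$ where $A$ codes $T$; (iii)~the definition $g(\eta)(\alpha)=1$ iff $N_\alpha$ believes $\eta_\alpha\mathrel{Q}\eta\restriction\alpha$ locally and that $Q$ is locally a quasi-order. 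The forward direction then runs on \emph{transitivity} inside $N_\alpha$ (combined with the branch $\zeta$ being captured on a club), and the backward direction runs on \emph{reflexivity} together with $\Pi^1_2$ reflection of $\lnot\phi_Q$ at the stationary set where $\eta_\alpha=\eta\restriction\alpha$. Nothing in this mechanism ``catches exits'' from a well-founded tree; it leverages the algebraic structure of quasi-orders, which is why the paper needs the sentences $\psi_{\reflexive}$ and $\psi_{\transitive}$ at all.

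Finally, the guessing principle cannot literally be Devlin's $\diamondsuit^\sharp$: that principle provably fails when $\kappa$ is ineffable, whereas Theorem~B ranges over all regular uncountable $\kappa$. The paper introduces the weaker $\dl^*_S(\Pi^1_2)$ precisely to survive ineffability, proves it holds after the LCC forcing for every stationary $S$ (via a nontrivial argument that first passes to a non-ineffable $S\subseteq S'$ and then uses the non-ineffability witness $\vec Z$ in the definition of the filtration $\langle N_\alpha\rangle$), and then proves the transversal lemma from it. Identifying the correct principle and proving both the transversal lemma and Theorem~C from it is where the bulk of the work lies, and the proposal defers all of it.
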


\begin{uremark}  The universality statement under consideration is optimal, as ${Q}\redulo{\sqc S}$ implies that $Q$ analytic.
\end{uremark}

The proof of the preceding goes through a new diamond-type principle for reflecting second-order formulas,  introduced here and denoted by $\dl^*_S(\Pi^1_2)$.
This principle is a strengthening of Jensen's $\diamondsuit_S$ and a weakening of Devlin's $\diamondsuit^\sharp_S$.
For $\kappa$ a successor cardinal, we have $\dl^*_S(\Pi^1_2)\Rightarrow\diamondsuit^*_S$ but not $\diamondsuit^*_S\Rightarrow\dl^*_S(\Pi^1_2)$ (see Remark~\ref{rmk38} below).
Another crucial difference between the two is that, unlike $\diamondsuit^*_S$, the principle $\dl^*_S(\Pi^1_2)$ is compatible with the set $S$ being ineffable.

In Section~2, we establish the consistency of the new principle, in fact, proving that it follows from an abstract condensation principle that was introduced and studied in \cite{FHl,HolyWuWelch}.
It thus follows that it is possible to force $\dl^*_S(\Pi^1_2)$ to hold over all stationary subsets $S$ of a prescribed regular uncountable cardinal $\kappa$.
It also follows that, in canonical models for Set Theory (including any $L[E]$ model with Jensen's $\lambda$-indexing which is sufficiently iterable and has no subcompact cardinals), $\dl^*_S(\Pi^1_2)$ holds for every stationary subset $S$ of every regular uncountable (including ineffable) cardinal $\kappa$.

Then, in Section~3, the core combinatorial component of our result is proved:
\begin{thmc} Suppose $S$ is a stationary subset of a regular uncountable cardinal $\kappa$.
If $\dl^*_S(\Pi^1_2)$ holds, then, for every analytic quasi-order $Q$ over $\kappa^\kappa$, ${Q}\redulo{\sqc S}$.
\end{thmc}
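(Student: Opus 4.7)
The plan is to build a $1$-Lipschitz reduction $f:\kappa^\kappa\to 2^\kappa$ by a level-by-level recursion, coupling the tree witnessing the analyticity of $Q$ with the $\Pi^1_2$-reflection provided by $\dl^*_S(\Pi^1_2)$. First, fix a tree $T\s(\kappa\times\kappa\times\kappa)^{<\kappa}$ such that $\eta\mathrel{Q}\xi$ iff there exists $\zeta\in\kappa^\kappa$ with $(\eta\restriction\alpha,\xi\restriction\alpha,\zeta\restriction\alpha)\in T$ for every $\alpha<\kappa$; for $\eta,\xi\in\kappa^\kappa$ write $T_{\eta,\xi}:=\{\sigma\in\kappa^{<\kappa}\mid(\eta\restriction|\sigma|,\xi\restriction|\sigma|,\sigma)\in T\}$, a tree on $\kappa$ whose $\kappa$-branches witness $\eta\mathrel{Q}\xi$. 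Also fix a sequence $\vec D=\langle D_\alpha\mid\alpha\in S\rangle$ witnessing $\dl^*_S(\Pi^1_2)$: for every structure on $\kappa$ expanded by parameters and every $\Pi^1_2$ property $\varphi$ that holds of it, the set of $\alpha\in S$ at which $D_\alpha$ codes the restriction together with a local realization of $\varphi$ is stationary.

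I would define $f$ on $\kappa^{\le\kappa}$ by recursion on $\dom(\eta)$, requiring $\dom(f(\eta))=\dom(\eta)$ and $\eta\s\xi\Rightarrow f(\eta)\s f(\xi)$, which automatically renders $f\restriction\kappa^\kappa$ a $1$-Lipschitz map into $2^\kappa$. Outside $S$, set $f(\eta)(\alpha):=0$. For $\alpha\in S$, parse $D_\alpha$ as coding a triple $(\eta^*,\xi^*,\rho^*)$ with $\eta^*,\xi^*\in\alpha^\alpha$ and $\rho^*$ a purported rank function on $T_{\eta^*,\xi^*}\cap\alpha^{<\alpha}$ certifying that this local subtree admits no $\alpha$-branch. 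Declare the \emph{flag} at $\alpha$ for $\eta$ to fire, i.e., put $f(\eta)(\alpha):=1$, iff $\eta\restriction\alpha=\eta^*$ \emph{and} $\rho^*$ is a bona fide such rank function; otherwise put $f(\eta)(\alpha):=0$. The asymmetric role of $\eta^*$ versus $\xi^*$ is essential: the flag fires on the ``first coordinate'' of a flagged failure only, so that a discrepancy is recorded in the $(\eta,\xi)$-direction and not the $(\xi,\eta)$-direction.

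For the \emph{positive direction}, suppose $\eta\mathrel{Q}\xi$ via some $\zeta$. Then for every $\alpha<\kappa$, $\zeta\restriction\alpha$ is an $\alpha$-branch through $T_{\eta,\xi}\cap\alpha^{<\alpha}$, which precludes the existence of any rank function $\rho^*$ certifying the absence of such a branch whenever $(\eta^*,\xi^*)=(\eta\restriction\alpha,\xi\restriction\alpha)$. Hence the flag never fires for $\eta$ against $\xi$, and $\{\alpha\in S\mid f(\eta)(\alpha)>f(\xi)(\alpha)\}$ is empty, so $f(\eta)\sqc S f(\xi)$. For the \emph{negative direction}, suppose $\eta\not\mathrel{Q}\xi$, so that $T_{\eta,\xi}$ is $\kappa$-wellfounded. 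This is a $\Pi^1_1$-, hence $\Pi^1_2$-, property of $(\kappa,\in,T,\eta,\xi)$. Feeding it into $\dl^*_S(\Pi^1_2)$ yields a stationary set of $\alpha\in S$ at which $D_\alpha$ codes exactly $(\eta\restriction\alpha,\xi\restriction\alpha,\rho^*)$ with $\rho^*$ a genuine rank function on the local subtree. At each such $\alpha$ the flag fires for $\eta$, while for $\xi$ the first-coordinate match $\xi\restriction\alpha=\eta\restriction\alpha$ fails at all sufficiently large $\alpha$ (assuming $\eta\neq\xi$; the case $\eta=\xi$ is trivial since $Q$ is reflexive), giving $f(\eta)(\alpha)=1>0=f(\xi)(\alpha)$ on a stationary set and hence $f(\eta)\not\sqc S f(\xi)$.

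The principal obstacle is the precise design of the ``flag'' predicate and the matching $\Pi^1_2$-formula whose reflection $\dl^*_S(\Pi^1_2)$ must underwrite. The predicate must be tight enough that genuine $T$-branches for $Q$-pairs defuse it on a club—guaranteeing the positive direction—yet rich enough that the global assertion ``$T_{\eta,\xi}$ is $\kappa$-wellfounded, verified via a rank function'' is reflectable by $\dl^*_S(\Pi^1_2)$ to stationarily many local rank-function witnesses—guaranteeing the negative direction. Tuning the coding of $D_\alpha$ so that the asymmetric first-coordinate convention respects reflexivity (and interacts coherently with transitivity), and handling boundary cases such as $\eta=\xi$ or $\eta\restriction\alpha=\xi\restriction\alpha$ for small $\alpha$, are further calibration issues to be addressed by a careful choice of the bookkeeping function.
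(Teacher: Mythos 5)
Your plan shares with the paper the starting point (a tree $T$ with $Q=\pr([T])$) and the intention to recognize $\eta\not\mathrel Q\xi$ via stationary reflection of well-foundedness, but the design of the flag is structurally flawed and the positive direction of the verification does not go through. Since a $1$-Lipschitz $f$ sees only a single argument, the condition that fires $f(\eta)(\alpha)=1$ must be stated in terms of $\eta\restriction\alpha$ and pre-chosen data at $\alpha$ alone. Your flag fires iff the first coordinate $\eta^*$ of the guess $D_\alpha$ is \emph{exactly} $\eta\restriction\alpha$ and $\rho^*$ is a rank function for $T_{\eta^*,\xi^*}\cap\alpha^{<\alpha}$. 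Now take $\eta\neq\xi$ with $\eta\mathrel Q\xi$. For $\alpha>\Delta(\eta,\xi)$ the equalities $\eta^*=\eta\restriction\alpha$ and $\eta^*=\xi\restriction\alpha$ are mutually exclusive, so if the flag fires for $\eta$ it cannot fire for $\xi$; to get $f(\eta)\sqc S f(\xi)$ you would therefore need $f(\eta)(\alpha)=0$ club-often. But your argument that the branch $\zeta$ defuses the flag applies only when $\xi^*=\xi\restriction\alpha$; it says nothing about guesses $(\eta\restriction\alpha,\xi^{**},\rho^*)$ with $\xi^{**}$ unrelated to $\xi$. As soon as there exists any $\xi''$ with $\eta\not\mathrel Q\xi''$ (true for every nontrivial quasi-order), feeding the well-founded tree $T_{\eta,\xi''}$ into $\dl^*_S(\Pi^1_2)$ produces stationarily many $\alpha$ where $D_\alpha$ codes $(\eta\restriction\alpha,\xi''\restriction\alpha,\rho^*)$ with $\rho^*$ genuine, hence $f(\eta)(\alpha)=1>0=f(\xi)(\alpha)$ on a stationary set, contradicting $\eta\mathrel Q\xi$. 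The ``asymmetric first-coordinate convention'' you flag at the end as a calibration issue is exactly the point where this breaks, and no bookkeeping can make an exact-match flag propagate from $\eta$ to $\xi$.

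The paper's fix is to make the flag a positive $Q$-\emph{inequality} rather than an equality, and to invoke the quasi-order axioms locally. It fixes a transversal $\langle\eta_\alpha\mid\alpha\in S\rangle$ (via Proposition~\ref{Prop2.4}) and sets $f(\eta)(\alpha)=1$ iff, inside $N_\alpha$, $(\eta_\alpha,\eta\restriction\alpha)$ lies in the local copy of $Q$ \emph{and} $Q$ looks locally reflexive and transitive (via $\psi_{\reflexive}$ and $\psi_{\transitive}$ from Lemma~\ref{prop32}). Local transitivity is what lets the flag propagate: from $(\eta_\alpha,\eta\restriction\alpha)$ and $(\eta\restriction\alpha,\xi\restriction\alpha)$ locally in $Q$ one gets $(\eta_\alpha,\xi\restriction\alpha)$ locally in $Q$, giving the positive direction, while the negative direction uses the transversal lemma applied to $\psi_{\reflexive}\wedge\psi_{\transitive}\wedge\neg\phi_Q$ so that stationarily often $\eta_\alpha=\eta\restriction\alpha$, whence local reflexivity fires the flag for $\eta$ and local failure of $\phi_Q$ suppresses it for $\xi$. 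The missing idea in your proposal is precisely this use of the quasi-order structure inside the flag; it is not an optimization but the mechanism that makes the Lipschitz reduction possible.
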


\section{A Diamond reflecting second-order formulas}
In \cite{MR683163}, Devlin introduced a strong form of the Jensen-Kunen principle $\diamondsuit^+_\kappa$,
which he denoted by $\diamondsuit^{\sharp}_{\kappa}$, and proved:
\begin{fact}[Devlin, {\cite[Theorem~5]{MR683163}}]\label{devlinsthm} In $L$, for every regular uncountable cardinal $\kappa$ that is not ineffable, $\diamondsuit^\sharp_\kappa$ holds.
\end{fact}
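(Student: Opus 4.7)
The plan is to follow the template of Jensen's proof of $\diamondsuit_\kappa$ in $L$, enhanced with Jensen's fine-structural condensation lemma to supply the extra reflection content of Devlin's $\diamondsuit^\sharp_\kappa$, and using the non-ineffability of $\kappa$ to enforce the cardinality bound on the local structures.

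Concretely, a $\diamondsuit^\sharp_\kappa$-sequence consists of $\langle \mathcal{N}_\alpha : \alpha < \kappa\rangle$ where each $\mathcal{N}_\alpha$ is a transitive set of cardinality at most $|\alpha|$ containing $\alpha$, satisfying a suitable fragment of $\zfc$ and closed under canonical operations, with the property that for every $X \s \kappa$ the set $\{\alpha < \kappa : X \cap \alpha \in \mathcal{N}_\alpha\}$ contains a club (plus additional absoluteness-style demands for $\in$- and $\Sigma_1$-formulas over $\mathcal{N}_\alpha$ which I will suppress). In $L$, I would define $\mathcal{N}_\alpha := J_{\gamma_\alpha}^L$, with $\gamma_\alpha$ chosen by a canonical fine-structural stopping rule inside the $J$-hierarchy, using the $<_L$-well-ordering to break ties.

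The guessing step would be supplied by condensation. Given $X\s\kappa$ and a sufficiently large regular $\lambda$ with $X \in L_\lambda$, build a continuous $\in$-increasing chain $\langle M_\xi : \xi < \kappa\rangle$ of elementary submodels of $L_\lambda$ of size $<\kappa$ with $X \in M_0$; for club-many $\alpha < \kappa$ one has $M_\alpha \cap \kappa = \alpha$, and Jensen's condensation lemma supplies a Mostowski collapse $M_\alpha \cong J_{\bar\lambda_\alpha}^L$ with $X \cap \alpha \in J_{\bar\lambda_\alpha}^L$ and $\bar\lambda_\alpha < (\alpha^+)^L$. The role of the rule defining $\gamma_\alpha$ is to ensure $\bar\lambda_\alpha \le \gamma_\alpha$ on a club, so that $X \cap \alpha \in \mathcal{N}_\alpha$ there.

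The main obstacle is maintaining $|\mathcal{N}_\alpha| \le |\alpha|$, i.e.\ $\gamma_\alpha < (\alpha^+)^L$: a naive supremum over all possible $\bar\lambda_\alpha$ would push $\gamma_\alpha$ right up to $(\alpha^+)^L$. This is exactly where the non-ineffability of $\kappa$ in $L$ is indispensable. From the failure of ineffability in $L$ one extracts a canonical non-reflecting sequence $\langle A_\alpha : \alpha < \kappa\rangle$ with $A_\alpha \s \alpha$; this sequence is then used as a stopping criterion in the recursive definition of $\gamma_\alpha$, truncating at the first fine-structural stage that certifies $A_\alpha$ correctly and thereby keeping $\gamma_\alpha$ strictly below $(\alpha^+)^L$ while still absorbing, on a club, all the collapses $\bar\lambda_\alpha$ arising from an arbitrary $X$. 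The delicate part — which is the technical heart of Devlin's original argument — is verifying that this truncation still preserves enough guessing to capture every $X$ on a club, equivalently, that ineffability is the sole obstruction to $\diamondsuit^\sharp_\kappa$ in $L$.
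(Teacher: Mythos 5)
First, note that the paper does not prove this statement: it is quoted as Fact~\ref{devlinsthm} directly from Devlin's article, so the only internal point of comparison is Theorem~\ref{diamond_from_lcc}, which the authors present as an adjustment of Devlin's proof. Measured against that, your sketch correctly identifies two of the three ingredients: condensation over the $J$-hierarchy to guess every $X\s\kappa$ on a club, and the use of a witness $\langle A_\alpha\mid\alpha<\kappa\rangle$ to the non-ineffability of $\kappa$ as the device keeping the height $\gamma_\alpha$ of $N_\alpha=J^L_{\gamma_\alpha}$ strictly below $(\alpha^+)^L$. This matches the mechanism visible in the paper's adaptation, where the condition $\vec Z\restriction(\alpha+1)\notin M_\beta$ is what bounds the set $S_\alpha$ of admissible stages.

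The genuine gap is that your proposal never engages with clause~(3) of $\diamondsuit^\sharp_\kappa$: the reflection, to stationarily many $\alpha$, of every true $\Pi^1_2$-sentence $\langle\kappa,{\in},(A_n)_{n\in\omega}\rangle\models\forall X\exists Y\varphi$ down to $\langle\alpha,{\in},\ldots\rangle\models_{N_\alpha}\forall X\exists Y\varphi$. You explicitly suppress this as ``additional absoluteness-style demands for $\in$- and $\Sigma_1$-formulas,'' but it is not an absoluteness demand --- it is the entire content of $\diamondsuit^\sharp$ beyond $\diamondsuit^+$ and $\diamondsuit^*$, and it is the technical heart of Devlin's argument. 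It also forces the stopping rule to differ from the one you describe: $\gamma_\alpha$ cannot be ``the first fine-structural stage that certifies $A_\alpha$ correctly''; in Devlin's construction (mirrored in the proof of Theorem~\ref{diamond_from_lcc}) it is the \emph{supremum} of the set $S_\alpha$ of all stages $\beta<\alpha^+$ satisfying a list of local conditions ($\zf^-$, $\alpha$ largest cardinal, stationarity of the relevant set, a definable well-order, and \emph{not yet} containing the non-ineffability witness $\vec Z\restriction(\alpha+1)$). Only because $N_{\alpha'}$ is then the union of an increasing chain of collapsed elementary submodels $M_{\beta_0}\in M_{\beta_1}\in\cdots$ can one argue that each $X\in N_{\alpha'}$ already lies in some $M_{\beta_n}$, that $M_{\beta_n}$ supplies the existential witness $Y$ for $\varphi$, and that this passes upward ($\Sigma_1$-persistently) to $N_{\alpha'}$. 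Without this limit structure the reflection clause has no proof, so the proposal as written would at best yield a $\diamondsuit^+_\kappa$-style guessing sequence in $L$, not $\diamondsuit^\sharp_\kappa$.
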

\begin{remark}
A subset $S$ of a regular uncountable cardinal $\kappa$ is said to be
\emph{ineffable} iff, for every sequence $\langle Z_\alpha\mid\alpha\in S\rangle$, there exists a subset $Z\s\kappa$, for which $\{\alpha\in S\mid Z\cap\alpha=Z_\alpha\cap\alpha\}$ is stationary.
Note that the collection of non-ineffable subsets of $\kappa$ forms a normal ideal that contains $\{\alpha<\kappa\mid \cf(\alpha)<\alpha\}$ as an element.
Also note that if $\kappa$ is ineffable, then $\kappa$ is strongly inaccessible.
Finally, we mention that by a theorem of Jensen and Kunen, for any ineffable set $S$, $\diamondsuit_S$ holds and $\diamondsuit^*_S$ fails.
\end{remark}

As said before, in this paper, we consider a variation of Devlin's principle compatible with $\kappa$ being ineffable.
Devlin's principle as well as its variation provide us with $\Pi^{1}_{2}$-reflection over structures of the form $\langle \kappa,{\in}, (A_{n})_{n\in \omega} \rangle $.
We now describe the relevant logic in detail.

A $\Pi^{1}_{2}$-sentence $\phi$ is a formula of the form $\forall X\exists Y\varphi$ where $\varphi$ is a first-order sentence over a relational language $\mathcal L$ as follows:
\begin{itemize}
\item[$\bullet$] $\mathcal L$ has a predicate symbol $\epsilon$ of arity $2$;
\item[$\bullet$] $\mathcal L$ has a predicate symbol $\mathbb X$ of arity $m({\mathbb X})$;
\item[$\bullet$] $\mathcal L$ has a predicate symbol $\mathbb Y$ of arity $m({\mathbb Y})$;
\item[$\bullet$] $\mathcal L$ has infinitely many predicate symbols $(\mathbb A_n)_{n\in \omega}$, each $\mathbb A_n$ is of arity $m(\mathbb A_n)$.
\end{itemize}

\begin{defn} For sets $N$ and $x$, we say that \emph{$N$ sees $x$} iff
$N$ is transitive, p.r.-closed, and $x\cup\{x\}\s N$.
\end{defn}

Suppose that a set $N$ sees an ordinal $\alpha$,
and that $\phi=\forall X\exists Y\varphi$ is a $\Pi^{1}_{2}$-sentence, where $\varphi$ is a first-order sentence in the above-mentioned language $\mathcal L$.
For every sequence $(A_n)_{n\in\omega}$ such that, for all $n\in\omega$, $A_n\s \alpha^{m(\mathbb A_n)}$,
we write
$$\langle \alpha,{\in}, (A_{n})_{n\in \omega} \rangle \models_N \phi$$
to express that the two hold:
\begin{enumerate}[(1)]
\item $(A_{n})_{n\in \omega} \in N$;
\item $\langle N,{\in}\rangle\models (\forall X\subseteq \alpha^{m(\mathbb X)})(\exists Y\subseteq \alpha^{m(\mathbb Y)})[\langle \alpha,{\in}, X, Y, (A_{n})_{n\in \omega}  \rangle\models \varphi]$,
where:
\begin{itemize}
\item[$\bullet$] $\in$ is the interpretation of $\epsilon$;
\item[$\bullet$] $X$ is the interpretation of $\mathbb X$;
\item[$\bullet$] $Y$ is the interpretation of $\mathbb Y$, and
\item[$\bullet$] for all $n\in\omega$,  $A_n$ is the interpretation of $\mathbb A_n$.
\end{itemize}
\end{enumerate}
\begin{convention}\label{conv23}
We write $\alpha^+$ for $|\alpha|^+$,
and write $\langle \alpha,{\in}, (A_{n})_{n\in \omega} \rangle \models \phi$ for
$$\langle \alpha,{\in}, (A_{n})_{n\in \omega} \rangle \models_{H_{\alpha^+}} \phi.$$
\end{convention}

\begin{defn}[Devlin, \cite{MR683163}] Let $\kappa$ be a regular and uncountable cardinal.

$\diamondsuit^\sharp_\kappa$ asserts the existence of a sequence $\vec N=\langle N_\alpha\mid\alpha<\kappa\rangle$ satisfying the following:

\begin{enumerate}[(1)]
\item for every infinite $\alpha<\kappa$, $N_\alpha$ is a set of cardinality $|\alpha|$ that sees $\alpha$;
\item for every $X\s\kappa$, there exists a club $C\s\kappa$ such that, for all $\alpha\in C$, $C\cap\alpha,X\cap\alpha\in N_\alpha$;
\item whenever $\langle \kappa,{\in},(A_n)_{n\in\omega}\rangle\models\phi$,
with $\phi$ a $\Pi^1_2$-sentence,
there are stationarily many $\alpha<\kappa$ such that
$\langle \alpha,{\in},(A_n\cap(\alpha^{m(\mathbb A_n)}))_{n\in\omega}\rangle\models_{N_\alpha}\phi$.
\end{enumerate}
\end{defn}

Consider the following variation:

\begin{defn}\label{reflectingdiamond}   Let $\kappa$ be a regular and uncountable cardinal, and $S\s\kappa$ stationary.

$\dl^*_S(\Pi^1_2)$ asserts the existence of a sequence $\vec N=\langle N_\alpha\mid\alpha\in S\rangle$ satisfying the following:

\begin{enumerate}[(1)]
\item for every $\alpha\in S$, $N_\alpha$ is a set of cardinality $<\kappa$ that sees $\alpha$;
\item for every $X\s\kappa$, there exists a club $C\s\kappa$ such that, for all $\alpha\in C \cap S$, $X\cap\alpha\in N_\alpha$;
\item whenever $\langle \kappa,{\in},(A_n)_{n\in\omega}\rangle\models\phi$,
with $\phi$ a $\Pi^1_2$-sentence,
there are stationarily many $\alpha\in S$ such that $|N_\alpha|=|\alpha|$ and
$\langle \alpha,{\in},(A_n\cap(\alpha^{m(\mathbb A_n)}))_{n\in\omega}\rangle\models_{N_\alpha}\phi$.
\end{enumerate}
\end{defn}
\begin{remark} The choice of notation for the above principle is motivated by \cite[Definition~2.10]{Sh:107} and \cite[Definition~45]{TodoVaan}.
\end{remark}

The goal of this section is to derive $\dl^*_S(\Pi^1_2)$ from an abstract principle
which is both forceable and a consequence of $V=L[E]$, for $L[E]$ an iterable extender model with Jensen $\lambda$-indexing without a subcompact cardinal (see~\cite{MR1860606,MR2081183}).
Note that this covers all $L[E]$ models that can be built so far.

\begin{convention}
The class of ordinals is denoted by $\ord$.
The class of ordinals of cofinality $\mu$ is denoted by $\cof(\mu)$, and
the class of ordinals of cofinality greater than $\mu$ is denoted by $\cof({>}\mu)$.
For a set of ordinals $a$, we write
$\acc(a) := \{\alpha \in a  \mid \sup(a \cap \alpha) = \alpha > 0\}$.
$\zf^{-}$ denotes $\zf$ without the power-set axiom.
The transitive closure of a set $X$ is denoted by $\trcl(X)$,
and the Mostowski collapse of a structure $\mathfrak B$ is denoted by $\clps(\mathfrak B)$.
\end{convention}

\begin{defn}\label{nicefiltration}
Suppose $N$ is a transitive set.
For a limit ordinal $\lambda$, we say that $\vec{M}=\langle M_\beta \mid \beta < \lambda \rangle $
is a \emph{nice filtration} of $N$ iff all of the following hold:
\begin{enumerate}[(1)]
\item $\bigcup_{\beta<\lambda}M_\beta=N$;
\item $\vec M$ is $\in$-increasing, that is, $\alpha<\beta<\lambda\implies M_\alpha\in M_\beta$;
\item $\vec M$ is continuous, that is, for every $\beta\in\acc(\lambda)$, $M_\beta=\bigcup_{\alpha<\beta}M_\alpha$;
\item for all $\beta<\lambda$, $M_\beta$ is a transitive set with $M_\beta\cap\ord=\beta$ and $|M_{\beta}|\le|\beta|+\aleph_0$.
\end{enumerate}
\end{defn}

\begin{convention} \label{vecM}
Whenever $\lambda$ is a limit ordinal, and $\vec M=\langle M_\beta\mid\beta<\lambda\rangle$ is a $\s$-increasing, continuous
sequence of sets, we denote its limit $\bigcup_{\beta<\lambda}M_\beta$ by $M_\lambda$.

\end{convention}

\begin{defn}[Holy-Welch-Wu, \cite{HolyWuWelch}]  \label{LCCupto}
Let $\eta < \zeta$ be ordinals.
We say that \emph{local club condensation holds in $(\eta,\zeta)$},
and denote this by $\lcc(\eta,\zeta)$,
iff there exist a limit ordinal $\lambda\ge\zeta$ and a sequence $\vec{M}=\langle M_\beta \mid \beta < \lambda \rangle $ such that
all of the following hold:
\begin{enumerate}[(1)]
\item $\vec M$ is \emph{nice filtration} of $M_{\lambda}$;
\item $\langle M_\lambda,{\in}\rangle \models\zf^{-}$;
\item For every ordinal $\alpha$ in the open interval $(\eta,\zeta)$ and every sequence $\vec{\mathcal{F}} = \langle (F_{n},k_{n}) \mid n \in \omega \rangle$ in $M_\lambda$ such that,
for all $n \in \omega$, $k_{n} \in \omega$ and $F_{n} \subseteq (M_{\alpha})^{k_{n}}$, there is a sequence
$\vec{\mathfrak{B}} = \langle \mathfrak{B}_{\beta} \mid \beta < |\alpha| \rangle $ in $M_\lambda$ having the following properties:
\begin{enumerate}[(a)]
\item for all $\beta<|\alpha|$, $\mathfrak B_{\beta}$ is of the form $$\langle B_{\beta},{\in}, \vec{M} \restriction (B_{\beta} \cap\ord),  (F_n\cap(B_\beta)^{k_n})_{n\in\omega} \rangle;$$
\item for all $\beta<|\alpha|$, $\mathfrak B_{\beta} \prec \langle M_{\alpha},{\in}, \vec{M}\restriction \alpha, (F_n)_{n\in\omega} \rangle$;
\item for all $\beta<|\alpha|$, $\beta\s B_\beta$ and  $|B_{\beta}| < |\alpha|$;
\item for all $\beta < |\alpha|$, there exists $\bar{\beta}<\lambda$ such that
$$\clps(\langle B_{\beta},{\in}, \langle B_{\delta} \mid \delta \in B_{\beta}\cap\ord \rangle \rangle) = \langle M_{\bar{\beta}},{\in}, \vec M\restriction \bar{\beta} \rangle;$$
\item $\langle B_\beta\mid\beta<|\alpha|\rangle$ is $\s$-increasing, continuous and converging to $M_\alpha$.
\end{enumerate}
\end{enumerate}

For $\vec{\mathfrak{B}}$ as in Clause~(3) above we say that
\emph{$\vec{\mathfrak{B}}$ witnesses $\lcc$ at $\alpha$ with respect to $\vec M$ and $\vec{\mathcal{F}}$}.
\end{defn}

\begin{remark}\label{NotationRemark} There are first-order sentences $\psi_0(\dot{\eta},\dot{\zeta})$ and $\psi_{1}(\dot{\eta})$
in the language $\mathcal{L}^*:=\{{\in},\vec{M}, \dot{\eta}, \dot{\zeta}\}$ of set theory augmented by a predicate for a nice filtration and two ordinals such that,
for all $\eta < \zeta \le \lambda$ and $\vec M=\langle M_\beta\mid\beta<\lambda\rangle$:
\begin{itemize}
\item[$\bullet$] $(\langle M_{\lambda},{\in}, \vec{M} \rangle \models \psi_0(\eta,\zeta)) \iff (\vec M\text{ witnesses that }\lcc(\eta,\zeta)\text{ holds})$, and
\item[$\bullet$] $(\langle M_{\lambda},{\in}, \vec{M} \rangle \models \psi_1(\eta)) \iff (\vec M\text{ witnesses that }\lcc(\eta,\lambda)\text{ holds})$.
\end{itemize}
Therefore, we will later make an abuse of notation and write $\langle N,{\in}, \vec{M} \rangle \models\lcc(\eta,\zeta)$
to mean that $\vec M$ is a nice filtration of $N$ witnessing that $\lcc(\eta,\zeta)$ holds.
\end{remark}

\begin{fact}[Friedman-Holy, implicit in  \cite{FHl}] \label{InaccForcing} Assume $\gch$.
For every inaccessible cardinal $\kappa$,
there is a set-size cofinality-preserving notion of forcing $\mathbb{P}$ such that, in $V^{\mathbb{P}}$, the three hold:
\begin{enumerate}[(1)]
\item $\gch$;
\item there is a nice filtration $\vec{M}=\langle M_\beta\mid\beta<\kappa^+\rangle$ of $H_{\kappa^+}$ witnessing that $\lcc(\omega_{1},\kappa^{+})$ holds;
\item there is a $\Delta_{1}$-formula $\Theta$ and a parameter $ a \subseteq \kappa$ such that the relation $<_\Theta$ defined by ($x <_{\Theta} y$ iff $H_{\kappa^{+}} \models \Theta(x,y,a)$) is a global well-ordering of $H_{\kappa^{+}}$.
\end{enumerate}
\end{fact}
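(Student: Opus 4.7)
The plan is to build, following the template of \cite{FHl}, a reverse Easton iteration $\mathbb{P}=\langle(\mathbb{P}_\beta,\dot{\mathbb{Q}}_\beta)\mid\beta\le\kappa\rangle$ with Easton support, in which at each inaccessible stage $\beta\le\kappa$ the iterand $\dot{\mathbb{Q}}_\beta$ is a $\beta^+$-strategically closed poset of size $\beta^+$ whose conditions are initial segments of a nice filtration of $H_{\beta^+}$ decorated with $\lcc$-witnesses. Concretely, a condition in $\dot{\mathbb{Q}}_\kappa$ is a pair $p=(\vec{M}^p,\vec{W}^p)$, where $\vec{M}^p=\langle M_\gamma^p\mid\gamma\le\gamma_p\rangle$ is a nice filtration for some $\gamma_p<\kappa^+$, and $\vec{W}^p$ assigns, to each pair $(\alpha,\vec{\mathcal F})$ with $\omega_1<\alpha\le\gamma_p$ and $\vec{\mathcal F}\in M_{\gamma_p}^p$ drawn from a bookkeeping enumeration, a sequence $\vec{\mathfrak B}^{p,\alpha,\vec{\mathcal F}}$ satisfying clauses (a)--(e) of Definition~\ref{LCCupto}, with the crucial requirement that the Mostowski collapse of each $B_\beta$ coincides with some earlier entry $M_{\bar\beta}^p$ of the same filtration. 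Extension of conditions is end-extension of the filtration that respects all previously committed witnesses.

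Under $\gch$, each $\dot{\mathbb{Q}}_\beta$ has size $\beta^+$ and is $\beta^+$-strategically closed, so it adds no bounded subsets of $\beta^+$ and, by a nice-name count, satisfies $\beta^{++}$-cc. Standard reverse Easton preservation theorems then yield that $\mathbb{P}$ preserves cofinalities and $\gch$, handling clause~(1).

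The core combinatorial step is that, for every $\mathbb{P}$-name $\dot{\vec{\mathcal F}}$ whose interpretation is intended to lie in some $M_\alpha$, the set of conditions $q$ that have committed a legal $\lcc$-witness at $(\alpha,\dot{\vec{\mathcal F}})$ is dense below any $p$ with $\gamma_p>\alpha$. Given such $p$, strategic closure lets us extend the filtration by appending, as successive entries $M_{\gamma_p+1}^q,\ldots,M_{\gamma_p+|\alpha|}^q$, the transitive collapses of a $\s$-increasing continuous chain $\langle B_\beta\mid\beta<|\alpha|\rangle$ of elementary substructures of $\langle M_\alpha^p,\in,\vec{M}^p\restriction\alpha,\vec{\mathcal F}\rangle$ with $\beta\s B_\beta$ and $|B_\beta|<|\alpha|$, chosen so that each $\clps(B_\beta)$ agrees with its appended $M_{\bar\beta}^q$. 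The resulting generic filtration then witnesses $\lcc(\omega_1,\kappa^+)$, yielding clause~(2). Clause~(3) is obtained by threading, alongside the filtration, a subset $\dot{a}\subseteq\kappa$ that encodes, for each $\gamma<\kappa^+$, a canonical well-ordering of $M_{\gamma+1}\setminus M_\gamma$; in the extension, every $x\in H_{\kappa^+}$ is then identified by the least $\gamma$ with $x\in M_\gamma$ together with a $\Delta_0$ search through $M_\gamma$ guided by $a\cap\gamma$, giving the desired $\Delta_1$ relation $<_\Theta$.

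The main obstacle is precisely this density argument: we must simultaneously arrange that (i) each committed $\clps(B_\beta)$ already appears as an entry of the filtration we are building, (ii) this is compatible with all previously committed witnesses at earlier pairs $(\alpha',\vec{\mathcal F}\,')$, and (iii) the whole iteration remains $\kappa^+$-strategically closed. This is what a careful Friedman--Holy recursion, using a suitable enumeration of pairs $(\alpha,\vec{\mathcal F})$ and a strategic back-and-forth between elementary substructures and their collapses within the filtration, accomplishes.
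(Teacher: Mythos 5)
This statement is presented in the paper as a \emph{Fact}, attributed to Friedman and Holy and cited as ``implicit in~\cite{FHl}''; the paper does not supply a proof, so there is no in-paper argument to compare against. The question is therefore whether your sketch would actually establish the result, and there is a genuine gap in the density argument that makes it not work as written.

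You propose that conditions carry, in addition to an initial segment $\vec{M}^p$ of the filtration, a bank $\vec{W}^p$ of committed $\lcc$-witnesses, and that the density of ``committing a witness at $(\alpha,\vec{\mathcal F})$'' is achieved by \emph{appending} the transitive collapses $\clps(B_\beta)$ to the top of the filtration, as the new entries $M_{\gamma_p+1}^q,\dots,M_{\gamma_p+|\alpha|}^q$. This cannot be right. By Definition~\ref{nicefiltration}(4), any entry $M_{\bar\beta}$ of a nice filtration satisfies $M_{\bar\beta}\cap\ord=\bar\beta$; hence if $\clps(B_\beta)=M_{\bar\beta}$, then $\bar\beta=\clps(B_\beta)\cap\ord=\otp(B_\beta\cap\ord)$, which has cardinality $|B_\beta|<|\alpha|$ and is therefore \emph{much smaller} than $\gamma_p$ (indeed $\bar\beta<|\alpha|^+\le\alpha+1\le\gamma_p$). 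The collapses of the small elementary substructures are thus forced to sit at low indices of the filtration that are already fixed by the condition $p$, not at fresh indices above $\gamma_p$. You cannot freely append them; rather, the requirement $\clps(\mathfrak B_\beta)=\langle M_{\bar\beta},\dots\rangle$ is a \emph{constraint} on the already-committed low part of $\vec{M}^p$, and your density argument gives no reason why the already-fixed low entries should happen to equal these collapses. You actually flag this tension yourself in the final paragraph (``each committed $\clps(B_\beta)$ already appears as an entry of the filtration we are building'') but then defer it to ``a careful Friedman--Holy recursion'', which is exactly the missing content.

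The way this obstacle is actually overcome in~\cite{FHl} (and in~\cite{HolyWuWelch}) is structurally different from what you outline: conditions do \emph{not} carry explicit $\lcc$-witnesses. One forces only the filtration itself (by end-extension of initial segments, with suitable support and strategic closure), and condensation is then derived from the \emph{genericity} of $\vec M$ together with the coherence of the iteration: for a suitable elementary substructure $B\prec\langle M_\alpha,{\in},\vec M\restriction\alpha,\dots\rangle$ that captures enough of the generic, the Mostowski collapse of the generic filter as seen by $B$ is itself a generic filter for a collapsed version of the forcing, whose induced filtration is precisely $\vec M\restriction\bar\beta$; this is what makes $\clps(B)=M_{\bar\beta}$ come out for free. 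That is a genuinely different mechanism from decorating conditions with witnesses, and it is the piece of the argument your sketch leaves unaddressed.

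Two smaller issues worth noting: your cardinal arithmetic (strategic closure plus a nice-name count giving $\beta^{++}$-cc and hence $\gch$/cofinality preservation) is fine in outline, but the statement asks for $\lcc(\omega_1,\kappa^+)$, i.e.\ condensation throughout the whole interval, and this requires the single filtration $\vec M$ of length $\kappa^+$ to witness $\lcc$ coherently at every level $\alpha\in(\omega_1,\kappa^+)$ simultaneously --- so the iteration must produce one coherent object, not a family of unrelated filtrations at the various stages. Your description of $\dot{\mathbb Q}_\beta$ as ``conditions are initial segments of a nice filtration of $H_{\beta^+}$'' does not yet explain how the $H_{\beta^+}$-filtrations built at different $\beta$ cohere into the final $\vec M$; this is another point where~\cite{FHl} has to work.
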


\begin{fact}[Holy-Welch-Wu, {\cite[p.~1362 and \S4]{HolyWuWelch}}]\label{forcing}
Assume $\gch$. For every regular cardinal  $\kappa$, there is a set-size notion of forcing $\mathbb{P}$
which is $({<}\kappa)$-directed-closed and has the $\kappa^+{\text{-}}cc$ such that, in $V^{\mathbb P}$, the three hold:
\begin{enumerate}[(1)]
\item $\gch$;
\item there is a nice filtration $\vec{M}=\langle M_\beta\mid\beta<\kappa^+\rangle$ of $H_{\kappa^+}$
witnessing that $\lcc(\kappa,\kappa^{+})$ holds;
\item there is a $\Delta_{1}$-formula $\Theta$ and a parameter $ a \subseteq \kappa$ such that the relation $<_\Theta$ defined by ($x <_{\Theta} y$ iff $H_{\kappa^{+}} \models \Theta(x,y,a)$) is a global well-ordering of $H_{\kappa^{+}}$.
\end{enumerate}
\end{fact}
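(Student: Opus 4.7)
The plan is to adapt the Friedman--Holy construction from Fact~\ref{InaccForcing} so that local club condensation is established on the higher interval $(\kappa,\kappa^+)$ instead of $(\omega_1,\kappa^+)$, while also arranging $({<}\kappa)$-directed-closure (to preserve cofinalities $\le\kappa$ and absorb small objects) and the $\kappa^+$-cc (for the top-end preservation and $\gch$). The natural notion of forcing $\mathbb{P}$ is one whose conditions are initial segments of a candidate nice filtration together with bookkeeping that already exhibits $\lcc$-witnessing sequences at every relevant level.

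Concretely, I would let a condition $p\in\mathbb{P}$ consist of (i) an ordinal $\lambda_p<\kappa^+$; (ii) a nice filtration $\langle M_\beta^p\mid\beta<\lambda_p\rangle$ of a transitive $\zf^-$-model $N_p$ of cardinality ${<}\kappa^+$; and (iii) for every $\alpha\in(\kappa,\lambda_p)$ and every finite sequence of predicates $\vec{\mathcal F}$ coded up to stage $\alpha$, a chosen sequence $\vec{\mathfrak B}^{p,\alpha}=\langle\mathfrak B^{p,\alpha}_\beta\mid\beta<|\alpha|\rangle$ satisfying Clauses~(a)--(e) of Definition~\ref{LCCupto} with respect to the initial segment of the filtration. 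The order is end-extension that preserves all the bookkeeping. A key design point: I want each $\mathfrak B^{p,\alpha}_\beta$ to collapse to an earlier $M^p_{\bar\beta}$, so conditions must be built \emph{coherently}, each new level $M^p_\beta$ absorbing and matching the Skolem-hull data of the previous ones. Using the $\gch$-given well-ordering of the ground model to make canonical choices is the standard trick that allows such a coherent construction.

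Chain-condition and closure arguments are then routine but essential. For $({<}\kappa)$-directed-closure, any directed family of fewer than $\kappa$ compatible conditions has a union whose filtration is continuous at the new top limit, and the bookkeeping glues together because all witnessing hulls are determined by the canonical well-ordering. For the $\kappa^+$-cc, a $\Delta$-system argument under $\gch$ shows that two conditions with isomorphic trunks are compatible. Preservation of $\gch$ follows from the $\kappa^+$-cc together with the $({<}\kappa)$-directed-closure (nothing new is added below $\kappa$), and cofinalities are preserved for the same reasons. Clause~(3), the $\Delta_1$ global well-ordering of $H_{\kappa^+}$, is then read off from the generic nice filtration: the parameter $a\subseteq\kappa$ codes a ground-model well-ordering of $H_\kappa$, and $x<_\Theta y$ is defined by locating the least $M_\beta^G$ containing both $x$ and $y$ and comparing them in the canonical well-ordering induced by $a$ and the indices of the filtration; this is a $\Delta_1$ statement over $H_{\kappa^+}$ because the filtration itself is uniformly $\Sigma_1$-definable from the generic.

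The main obstacle is establishing that $\lcc(\kappa,\kappa^+)$ actually holds in $V^{\mathbb P}$, that is, that for every $\alpha\in(\kappa,\kappa^+)$ and every predicate sequence $\vec{\mathcal F}\in M^G_{\kappa^+}$ appearing in the extension, a witnessing $\vec{\mathfrak B}$ exists. For this I would prove, by a density argument, that given any condition $p$ with $\alpha<\lambda_p$ and any $\vec{\mathcal F}$ already named in $p$, there is an extension $q\le p$ whose bookkeeping contains a legitimate $\vec{\mathfrak B}^{q,\alpha}$. This is done by performing, inside a sufficiently closed intermediate model, a Löwenheim--Skolem chain of elementary substructures of $\langle M^p_\alpha,\in,\vec M^p\restriction\alpha,\vec{\mathcal F}\rangle$ of cardinality ${<}|\alpha|$ whose Mostowski collapses, by coherence of the construction, are forced to equal earlier $M_{\bar\beta}$. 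Handling predicates first named by the generic itself requires a reflection/absorption step: one unfolds the nice filtration one level higher and appeals to elementarity of the freshly added $M^q_{\lambda_p}$ over the relevant truncations, which is exactly where the $({<}\kappa)$-closure of $\mathbb P$ is used to ensure that the relevant hulls lie in the ground model. Once density is in hand, genericity supplies the witnessing sequences at every $\alpha\in(\kappa,\kappa^+)$, completing Clause~(2).
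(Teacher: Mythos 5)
The paper does not prove this statement: it is quoted as a \emph{Fact} and attributed to Holy--Welch--Wu, with a citation to page~1362 and \S4 of their paper. So there is no ``paper's own proof'' here to compare against; what the paper supplies is a pointer to the literature. Your proposal is an attempt to reconstruct the Holy--Welch--Wu forcing from scratch, which is a legitimate thing to try, and the broad strokes (conditions that build an initial segment of the filtration by end-extension, $({<}\kappa)$-directed-closure for the bottom end, $\kappa^+$-cc and $\gch$-counting for the top end, a $\Delta_1$ well-ordering read off the generic filtration) are in the right spirit. But as a proof sketch it has several concrete problems that you would need to resolve before it would count as a proof.

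First, your conditions cannot carry ``for every $\alpha\in(\kappa,\lambda_p)$ and every finite sequence $\vec{\mathcal F}$ coded up to stage $\alpha$, a chosen witnessing sequence $\vec{\mathfrak B}^{p,\alpha}$'': for a given $\alpha$, the collection of relevant $\vec{\mathcal F}$'s in the final $M_{\kappa^+}$ has size $\kappa^+$, while each condition has size at most $\kappa$, so explicit bookkeeping of all witnesses is impossible by cardinality. The actual mechanism in condensation forcings is to arrange a canonicity/coherence property of the filtration itself (together with the generic well-ordering) strong enough that the $\vec{\mathfrak B}$'s can be \emph{recovered} by a L\"owenheim--Skolem argument performed \emph{after} the fact, not stored in the conditions. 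Second, you say the main obstacle (that $\lcc(\kappa,\kappa^+)$ holds for predicates first appearing in $V^{\mathbb P}$) is handled by ``a reflection/absorption step'' and ``unfolding the filtration one level higher,'' but this is exactly where the construction is delicate: the generic filtration must end up equal to all of $H_{\kappa^+}^{V^{\mathbb P}}$, which contains sets built from the generic itself, so the forcing is inherently self-referential, and a density argument alone (over ground-model conditions naming ground-model predicates) does not obviously reach these new objects. This self-reference is the hard part of Friedman--Holy/Holy--Welch--Wu and your sketch does not really engage with it. Third, ``any directed family of fewer than $\kappa$ conditions has a union which is a condition'' needs an argument that the union of the bookkeeping is again coherent and that the limit model at the new top is of the right form (transitive, $\zf^-$, correct ordinal height); this is plausible but not automatic, particularly given that your conditions require $N_p\models\zf^-$, which unions of $\zf^-$-models need not satisfy without care. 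Finally, the $\gch$-preservation claim also needs the bound $|\mathbb P|\le\kappa^+$, which your description does not isolate. None of these are fatal in principle -- they are precisely the things Holy--Welch--Wu work out -- but filling them in is the content of the cited reference, not a routine exercise.
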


The following is a improvement of  \cite[Theorem~8]{FHl}.

\begin{fact}[Fernandes, \cite{OnLCC}] \label{NoSubcompact}
Let $L[E]$ be an extender model with Jensen $\lambda$-indexing.
Suppose that, for every $\alpha \in \ord$, the premouse $L[E] || \alpha $ is weakly iterable.\footnote{Here, $L[E]||\alpha$ stands for $\langle J_{\alpha}^{E}, \in, E\restriction \omega \alpha, E_{\omega \alpha} \rangle$,
following the notation from \cite{MR1876087}. For the definition of \emph{weakly iterable}, see \cite[p.~311]{MR1876087}.}
Then, for every infinite cardinal $\kappa$, the following are equivalent:
\begin{enumerate}[(a)]
\item $\langle L_\beta[E]\mid\beta<\kappa^{+} \rangle$ witneses that $\lcc(\kappa^{+},\kappa^{++})$ holds;
\item $L[E] \models``\kappa\text{ is not a subcompact cardinal}"$.
\end{enumerate}
In addition, for every infinite limit cardinal $\kappa$, $\langle L_\beta[E]\mid\beta<\kappa^{+} \rangle$ witnesses that $\lcc(\kappa,\kappa^{+})$ holds.
\end{fact}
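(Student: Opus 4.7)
The plan is to prove both directions of the equivalence by invoking Jensen's fine-structural condensation theory for $L[E]$ with $\lambda$-indexing, refining the Friedman--Holy argument \cite{FHl} in the range $(\kappa^+,\kappa^{++})$ while avoiding the obstructions posed by subcompact cardinals.

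For $(b)\Rightarrow(a)$, I would take $\vec M:=\langle L_\beta[E]\mid\beta<\kappa^{++}\rangle$ as the candidate nice filtration of $L_{\kappa^{++}}[E]$; the clauses of Definition~\ref{nicefiltration} and the $\zf^{-}$ assertion follow from the standard fine structure of $\lambda$-indexed premice. Given $\alpha\in(\kappa^+,\kappa^{++})$ and parameters $\vec{\mathcal F}\in L_{\kappa^{++}}[E]$, I would recursively build a $\s$-increasing, continuous chain $\langle B_\beta\mid\beta<|\alpha|\rangle$ of elementary substructures of $\langle L_\alpha[E],{\in},\vec M\restriction\alpha,\vec{\mathcal F}\rangle$ by iterated Skolem hulls of $\beta\cup\{\vec{\mathcal F}\}$, arranging $\beta\s B_\beta$ and $|B_\beta|<|\alpha|$. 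The content is that $\clps(B_\beta)$ coincides with some $L_{\bar\beta}[E]$ from the filtration, which is Jensen's Condensation Lemma for $\lambda$-indexed premice; its hypotheses hold in $(\kappa^+,\kappa^{++})$ precisely because $\kappa$ is not subcompact in $L[E]$, since subcompactness is known to be exactly the obstruction that permits the collapse to be a type-III initial segment whose top extender is incoherent with the $L[E]$-hierarchy, breaking clause~(d) of Definition~\ref{LCCupto}.

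For $(a)\Rightarrow(b)$, I would argue the contrapositive: assume $\kappa$ is subcompact in $L[E]$, so that for every $A\s L_{\kappa^+}[E]$ there is an elementary embedding $\pi:\langle L_{\bar\kappa^+}[E],{\in},\bar A\rangle\to\langle L_{\kappa^+}[E],{\in},A\rangle$ with critical point $\bar\kappa<\kappa$. Taking $A$ to code the candidate filtration $\vec M\restriction\kappa^+$, fixing $\alpha$ just above $\kappa^+$, and applying LCC at $\alpha$, I would pull the resulting chain $\langle B_\beta\mid\beta<|\alpha|\rangle$ back through $\pi$ to produce a coherent sequence over $L_{\bar\kappa^+}[E]$ encoding a short extender with critical point $\bar\kappa$, contradicting the requirement that such an extender be indexed at its Jensen $\lambda$-index rather than appearing in the passive hierarchy below $\kappa^{++}$. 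The main obstacle here is bridging LCC --- a purely structural condensation statement about Mostowski collapses of elementary substructures --- with subcompactness, which is phrased in terms of extender embeddings. Zeman's fine-structural apparatus \cite{MR1860606,MR2081183}, specifically the solidity and universality of standard parameters, provides the bridge: it certifies that the collapsed substructures in clause~(d) of Definition~\ref{LCCupto} genuinely sit on the $L[E]$-sequence, and dually that a putative subcompactness embedding must violate that clause somewhere in $(\kappa^+,\kappa^{++})$. The supplementary clause for limit cardinals $\kappa$ is easier, since subcompactness is vacuous there and the same condensation argument as in direction $(b)\Rightarrow(a)$ applies verbatim at $(\kappa,\kappa^+)$.
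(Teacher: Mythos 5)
The paper does not prove this statement. It is labelled a \emph{Fact} and attributed to Fernandes's \cite{OnLCC} (listed in the bibliography as ``in preparation''), and is used as a black box in the subsequent Corollary. There is therefore no in-paper proof against which to compare your proposal.

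On its own terms, your sketch invokes the right machinery --- Jensen condensation for $\lambda$-indexed premice, the Schimmerling--Zeman fine-structural analysis \cite{MR1860606,MR2081183}, solidity and universality of standard parameters --- and the overall shape (build continuous chains of hulls, apply condensation; for the converse, push a subcompactness embedding against clause~(3)(d) of Definition~\ref{LCCupto}) is the plausible one. But both directions are genuinely sketchy. For $(b)\Rightarrow(a)$, the claim that the Mostowski collapse lands on the $E$-sequence ``precisely because $\kappa$ is not subcompact'' conflates the general condensation lemma (which is where the weak-iterability hypothesis in the statement is actually consumed --- you never use it) with the one specific failure mode; the description of the obstruction as a ``type-III initial segment whose top extender is incoherent with the $L[E]$-hierarchy'' is not a standard formulation and would need to be substantiated. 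For $(a)\Rightarrow(b)$ the crucial step, pulling the witnessing chain back through the subcompactness embedding $\pi$ to ``produce a short extender with critical point $\bar\kappa$'' and thereby reach a contradiction, is asserted without any mechanism; this is the heart of the argument. Finally, note that the filtration in clause~(a), $\langle L_\beta[E]\mid\beta<\kappa^+\rangle$, is too short to witness $\lcc(\kappa^+,\kappa^{++})$ under Definition~\ref{LCCupto}, which requires $\lambda\ge\zeta=\kappa^{++}$; your proposal silently corrects this to $\beta<\kappa^{++}$, which is surely what is intended, but the discrepancy in the statement is worth flagging.
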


\begin{lemma}\label{obvious} Suppose that $\lambda$ is a limit ordinal and that $\vec{M} = \langle M_\beta \mid \beta< \lambda \rangle$ is a nice filtration of $H_\lambda$.
Then, for every infinite cardinal $\theta\le\lambda$, $M_\theta\s H_\theta$.
\end{lemma}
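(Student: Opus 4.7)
The plan is to handle the two cases $\theta=\lambda$ and $\theta<\lambda$ separately. When $\theta=\lambda$, Convention~\ref{vecM} reads $M_\lambda=\bigcup_{\beta<\lambda}M_\beta$, and clause~(1) of the definition of nice filtration identifies this union with $H_\lambda$; hence $M_\theta=H_\lambda=H_\theta$ and the inclusion is automatic.

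Assume now that $\theta<\lambda$. As an infinite cardinal, $\theta$ is a nonzero limit ordinal, so $\theta\in\acc(\lambda)$ and continuity (clause~(3)) yields $M_\theta=\bigcup_{\beta<\theta}M_\beta$. Fix $x\in M_\theta$ and choose $\beta<\theta$ with $x\in M_\beta$. By clause~(4), $M_\beta$ is transitive, so $\trcl(x)\s M_\beta$, and the same clause gives $|M_\beta|\le|\beta|+\aleph_0$. Combining these, $|\trcl(x)|\le|\beta|+\aleph_0$, which is strictly below $\theta$ whenever $\theta$ is uncountable (since then both $|\beta|<\theta$ and $\aleph_0<\theta$). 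By definition of $H_\theta$, this gives $x\in H_\theta$, as required.

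The proof is essentially bookkeeping — piecing together continuity, transitivity, and the cardinality bound from the definition of a nice filtration, invoking each clause at the appropriate moment. The only point that needs any care is the cardinal arithmetic $|\beta|+\aleph_0<\theta$, which relies on $\theta$ being strictly greater than $\aleph_0$; this is precisely the regime in which the lemma will subsequently be used.
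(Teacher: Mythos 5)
Your argument is correct and matches the paper's proof in essence: both identify $M_\theta$ with $\bigcup_{\beta<\theta}M_\beta$ and then exploit the transitivity of each $M_\beta$ together with Clause~(4)'s cardinality bound to place each $M_\beta$, hence each $x\in M_\theta$, inside $H_\theta$. Your explicit remark that $|\beta|+\aleph_0<\theta$ needs $\theta>\aleph_0$ is a fair observation --- the paper glosses over this by writing $|M_\beta|=|\beta|$, which follows from Clause~(4) only when $\beta$ is infinite --- but since the lemma is only ever invoked at uncountable $\theta$, nothing is lost.
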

\begin{proof} Let $\theta\le\lambda$ be an infinite cardinal.
By Clause~(4) of Definition~\ref{nicefiltration}, for all $\beta < \theta$, the set $M_{\beta}$ is transitive, $M_{\beta} \cap\ord = \beta$, and $|M_{\beta}|  = |\beta| < \theta $.
It thus follows that $M_{\theta} = \bigcup_{\beta < \theta} M_{\beta} \s H_{\theta}$.
\end{proof}

Motivated by the property of acceptability that holds in extender models, we define the following property for nice filtrations:

\begin{defn} Given a nice filtration  $\vec{M} = \langle M_\beta \mid \beta< \kappa^{+} \rangle$ of $H_{\kappa^{+}}$, we say that $\vec{M}$ is \emph{eventually slow at $\kappa$}
iff there exists an infinite cardinal $\mu < \kappa$ such that, for every cardinal $\theta$ with $\mu<\theta\le\kappa$, $M_{\theta} = H_{\theta}$.
\end{defn}

\begin{lemma}\label{MvsH}
Suppose that $\vec{M}=\langle M_{\beta} \mid \beta < \kappa^{+} \rangle $ is a nice filtration of $H_{\kappa^{+}}$ that is eventually slow at $\kappa$.
Then,  for a tail of $\alpha < \kappa$,
for every sequence $\vec{\mathcal{F}} = \langle (F_{n},k_{n}) \mid n \in \omega \rangle$ such that, for all $n \in \omega$, $k_{n} \in \omega$ and $F_{n} \subseteq (M_{\alpha^+})^{k_{n}}$,
there is $\vec{\mathfrak{B}}$ that witnesses $\lcc$ at $\alpha^+$ with respect to $\vec M$ and $\vec{\mathcal F}$.
\end{lemma}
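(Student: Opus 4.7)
Let $\mu<\kappa$ be the infinite cardinal witnessing that $\vec M$ is eventually slow at $\kappa$, so that $M_\theta=H_\theta$ for every cardinal $\theta$ with $\mu<\theta\le\kappa$. The plan is to show that the conclusion holds for all $\alpha\in(\mu,\kappa)$. Fix such an $\alpha$ and a sequence $\vec{\mathcal F}=\langle (F_n,k_n)\mid n\in\omega\rangle$ as in the statement. By slowness applied to $\theta:=\alpha^+\le\kappa$ together with Lemma~\ref{obvious}, we have $M_{\alpha^+}=H_{\alpha^+}$. Write $\mathcal A:=\langle M_{\alpha^+},{\in},\vec M\restriction\alpha^+,(F_n)_{n\in\omega}\rangle$.

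The first step is a reflection argument: I would show that the set
\[
E:=\{\gamma<\alpha^+\mid \langle M_\gamma,{\in},\vec M\restriction\gamma,(F_n\cap M_\gamma^{k_n})_{n\in\omega}\rangle\prec\mathcal A\}
\]
is a club in $\alpha^+$. Closedness follows from the continuity of $\vec M\restriction\alpha^+$ (Clause~(3) of Definition~\ref{nicefiltration}). For unboundedness, given $\gamma_0<\alpha^+$, one iterates $\omega$ times, at each stage closing $M_{\gamma_n}$ under Skolem witnesses for the countably many formulas and predicates of $\mathcal A$; since each witness lies in some $M_\delta$ with $\delta<\alpha^+$, and there are only $|\gamma_n|<\alpha^+$ many witnesses at each step, regularity of $\alpha^+$ keeps the process inside $\alpha^+$, yielding $\gamma:=\sup_n\gamma_n\in E$.

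The second step is the construction of $\vec{\mathfrak B}$. For each $\beta<\alpha^+$, I would set $B_\beta:=M_{g(\beta)}$ where $g(\beta):=\min(E\setminus\beta)$, and let $\mathfrak B_\beta$ be the structure mandated by Clause~(a) of Definition~\ref{LCCupto}. Clauses~(a), (b), (c), and (e) follow directly from the construction: elementarity in (b) is immediate since $g(\beta)\in E$; the size bound and $\beta\subseteq B_\beta$ in (c) follow from $\beta\le g(\beta)<\alpha^+$ combined with Clause~(4) of Definition~\ref{nicefiltration}; and the chain is $\subseteq$-increasing, continuous (continuity on the ``gaps'' of $E$ is straightforward since $g$ is constant there, and continuity at limits of $E$ follows from the continuity of $\vec M\restriction\alpha^+$), and cofinal in $M_{\alpha^+}$ because $E$ is unbounded in $\alpha^+$.

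The hard part is Clause~(d), the condensation-type requirement that the Mostowski collapse of $\langle B_\beta,{\in},\langle B_\delta\mid\delta\in B_\beta\cap\ord\rangle\rangle$ equals $\langle M_{\bar\beta},{\in},\vec M\restriction\bar\beta\rangle$ for some $\bar\beta<\kappa^+$. Since $B_\beta=M_{g(\beta)}$ is transitive, the collapse is the identity and one is forced to take $\bar\beta:=g(\beta)$, reducing the task to showing $\langle B_\delta\mid\delta<g(\beta)\rangle=\vec M\restriction g(\beta)$, i.e.\ $B_\delta=M_\delta$ for all $\delta<g(\beta)$. I expect this to be the main obstacle, since $B_\delta=M_{g(\delta)}$ agrees with $M_\delta$ only at fixed points of the enumeration of $E$. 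To handle this, I would refine the construction by iterating: replace $E$ by the diagonal club $E^\infty:=\bigcap_{n<\omega} E^{(n)}$, where $E^{(0)}:=E$ and $E^{(n+1)}:=\{\gamma\in E^{(n)}\mid\otp(E^{(n)}\cap\gamma)=\gamma\}$; work with $g^\infty(\beta):=\min(E^\infty\setminus\beta)$; and verify by induction on $\beta$ that on an initial segment $\delta<g^\infty(\beta)$ the predicate value $B_\delta$ collapses to $M_\delta$ under the appropriate identification, absorbing the ``constant'' segments of $g^\infty$ into the identity collapse of $M_{g^\infty(\beta)}$. The details of this matching are the delicate combinatorial core of the lemma.
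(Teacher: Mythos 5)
Your basic approach is the same as the paper's: show that $M_\gamma \prec \langle M_{\alpha^+},{\in},\vec M\restriction\alpha^+,(F_n)\rangle$ for a club of $\gamma<\alpha^+$ and read off $\vec{\mathfrak B}$ from those levels. (The paper gets to the club $C=\{\gamma<\alpha^+\mid A_\gamma=M_\gamma\}$ by building an arbitrary increasing continuous chain $\langle\mathfrak A_\gamma\mid\gamma<\alpha^+\rangle$ of small elementary submodels covering $H_{\alpha^+}$ and comparing it with $\vec M$; your direct L\"owenheim--Skolem argument for the clubness of $E$ amounts to the same thing.) The paper then reindexes by the increasing enumeration $\{\gamma_\beta\mid\beta<\alpha^+\}$ of $C$ and sets $\mathfrak B_\beta:=\mathfrak A_{\gamma_\beta}$, so that $B_\beta=M_{\gamma_\beta}$; your choice $B_\beta:=M_{g(\beta)}$ with $g(\beta)=\min(E\setminus\beta)$ is a weakly increasing reindexing that also satisfies Clause~(3)(e), though the paper's is strictly $\in$-increasing and cleaner.

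The genuine problem with your write-up is the worry about Clause~(3)(d), which you label ``the delicate combinatorial core.'' It is not. Because $B_\beta=M_{g(\beta)}$ is transitive and equal to a level of $\vec M$, the Mostowski collapse of $\mathfrak B_\beta$ is the identity, so one can take $\bar\beta:=g(\beta)$ and the clause holds outright. You are reading the predicate that gets collapsed as the external sequence of $B$'s (and then trying to force $B_\delta=M_\delta$ for all $\delta<g(\beta)$, which is impossible); but the object appearing in $\mathfrak B_\beta$ in Clause~(3)(a) is $\vec M\restriction(B_\beta\cap\ord)$, the restriction of the nice filtration, and Clause~(3)(d) is asserting the condensation-type requirement that this collapses to an initial segment of $\vec M$. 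When $B_\beta=M_{g(\beta)}$ is transitive, the collapse fixes $\vec M\restriction g(\beta)$ pointwise, so there is nothing to verify. Your diagonal-club refinement is unnecessary, and moreover it would not achieve what you claim: no matter how many times you thin $E$, for any $\delta$ lying in a gap of the resulting club you still have $g(\delta)>\delta$ and $B_\delta\ne M_\delta$. Fortunately that matching was never required. Dropping the last paragraph and simply appealing to transitivity of $B_\beta$ closes Clause~(3)(d) and brings your proof in line with the paper's.
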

\begin{proof} Fix an infinite cardinal $\mu < \kappa$ such that, for every cardinal $\theta$ with $\mu<\theta\le\kappa$, $M_{\theta} = H_{\theta}$.
Let $\alpha \in (\mu,\kappa)$ be arbitrary.
Now, given a sequence $\vec{\mathcal{F}}$ as in the statement of the lemma,
build by recursion a $\s$-increasing and continuous sequence $\langle\mathfrak{A}_{\gamma} \mid \gamma< \alpha^{+}\rangle$
of elementary submodels of $\langle M_{\alpha^+},{\in},\vec{M}\restriction\alpha^+, (F_n)_{n\in\omega} \rangle$, such that:
\begin{itemize}
\item[$\bullet$] for each $\gamma < \alpha^{+}$, $|A_{\gamma}|<\alpha^{+}$, and
\item[$\bullet$] $\bigcup_{\gamma < \alpha^{+}} A_{\gamma} = H_{\alpha^{+}}$.
\end{itemize}

By a standard argument, $C:=\{ \gamma<\alpha^+\mid A_\gamma=M_\gamma\}$ is a club in $\alpha^+$.
Let $\{ \gamma_\beta\mid\beta<\alpha^+\}$ denote the increasing enumeration of $C$. Denote $\mathcal B_\beta:=\mathcal A_{\gamma_\beta}$.
Then $\vec{\mathfrak B}=\langle\mathfrak{B}_\beta \mid \beta<\alpha^+\rangle$
is an $\in$-increasing and continuous sequence of elementary submodels of $\langle M_{\alpha^{+}},{\in},\vec M\restriction\alpha^+,(F_n)_{n\in\omega}\rangle$, such that,
for all $\beta < \alpha^{+}$, $\clps(\mathfrak B_{\beta}) = \langle M_{\gamma_{\beta}},{\in},\ldots\rangle$.
\end{proof}

In the next two lemmas we find sufficient conditions for nice filtrations $\langle M_\beta \mid \beta  < \kappa^{+} \rangle $ to be eventually slow at $\kappa$.

\begin{lemma}\label{MH2} Suppose that $\kappa$ is a successor cardinal and that $\vec{M}=\langle M_\beta\mid\beta<\kappa^+\rangle$ is
a nice filtration of $H_{\kappa^+}$ witnessing that  $\lcc(\kappa,\kappa^{+})$ holds.
Then  $\vec{M}$ is eventually slow at $\kappa$.
\end{lemma}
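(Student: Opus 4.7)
Since $\kappa$ is a successor cardinal, write $\kappa=\mu^+$ for some infinite cardinal $\mu$. The plan is to show that this $\mu$ witnesses eventual slowness of $\vec M$ at $\kappa$. The crucial point is that the only cardinal in the half-open interval $(\mu,\kappa]$ is $\kappa$ itself, so it suffices to establish the single equality $M_\kappa=H_\kappa$. The inclusion $M_\kappa\s H_\kappa$ is handed to us by Lemma~\ref{obvious}, so all the work is in the reverse direction.

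To show $H_\kappa\s M_\kappa$, I would fix an arbitrary $x\in H_\kappa$ (so $|\trcl(\{x\})|<\kappa$) and, using the fact that $\bigcup_{\beta<\kappa^+}M_\beta=H_{\kappa^+}$, select some $\alpha\in(\kappa,\kappa^+)$ with $x\in M_\alpha$. I would then invoke $\lcc(\kappa,\kappa^+)$ at this particular $\alpha$, applied to the $\omega$-sequence $\vec{\mathcal F}=\langle(\{x\},1),(\emptyset,0),(\emptyset,0),\ldots\rangle$, which plainly lies in $H_{\kappa^+}$. This yields a witness $\vec{\mathfrak B}=\langle \mathfrak B_\beta\mid\beta<\kappa\rangle$ of $\s$-increasing, continuous elementary submodels of $\langle M_\alpha,{\in},\vec M\restriction\alpha,\{x\}\rangle$ converging to $M_\alpha$, with $|B_\beta|<\kappa$ for every $\beta$, and with $\clps(\mathfrak B_\beta)=\langle M_{\bar\beta},{\in},\vec M\restriction\bar\beta\rangle$ for suitable $\bar\beta$. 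Because $\{x\}$ is a distinguished unary predicate whose witness $x$ is unique in $M_\alpha$, elementarity forces $x\in B_\beta$ for every $\beta<\kappa$.

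The heart of the argument is then to find a single $\beta<\kappa$ with $\trcl(\{x\})\s B_\beta$. As $|\trcl(\{x\})|<\kappa$, $\kappa$ is regular (being a successor cardinal), and $M_\alpha=\bigcup_{\beta<\kappa}B_\beta$ is an increasing chain, a routine cofinality argument delivers such a $\beta$. For this $\beta$, a straightforward $\in$-induction shows the Mostowski collapse $\pi_\beta$ of $B_\beta$ is the identity on $\trcl(\{x\})\cup\{x\}$, so $x=\pi_\beta(x)\in M_{\bar\beta}$. Finally, $|M_{\bar\beta}|=|B_\beta|<\kappa$ gives $\bar\beta<\kappa$ (as $\kappa$ is a cardinal), so $x\in M_{\bar\beta}\s M_\kappa$, as desired.

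I do not expect a serious obstacle here: the argument is a clean reflection-and-collapse, with the two uses of successor-ness being (i)~to reduce the statement to the single equality $M_\kappa=H_\kappa$ (since no cardinals lie strictly between $\mu$ and $\kappa$), and (ii)~implicitly, via regularity of $\kappa$, in the cofinality step that produces a single $B_\beta$ containing $\trcl(\{x\})$. The most delicate point worth verifying carefully will be the very application of Clause~(3) of Definition~\ref{LCCupto} with an appropriate $\vec{\mathcal F}$ so that elementarity actually captures $x$ in each $B_\beta$; encoding $\{x\}$ as a unary predicate $F_0$ (with the remaining $F_n$ trivial) is exactly what makes this work.
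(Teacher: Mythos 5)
Your proof is correct, and it takes a recognizably different route from the paper's at the one interesting step. Both proofs begin identically: reduce to showing $H_\kappa\s M_\kappa$ (the converse being Lemma~\ref{obvious}), fix $x\in H_\kappa$, find $\alpha\in(\kappa,\kappa^+)$ with $\trcl(\{x\})\s M_\alpha$, and invoke $\lcc(\kappa,\kappa^+)$ at $\alpha$ to get a chain $\langle\mathfrak B_\beta\mid\beta<\kappa\rangle$. They diverge on how one produces a single $B_\beta$ with $\trcl(\{x\})\s B_\beta$. The paper feeds the $\lcc$ machinery a bijection $f:\theta\leftrightarrow\trcl(\{x\})$ (with $\theta=|\trcl(\{x\})|$) as the predicate $F_0$, looks at the specific model $\mathfrak B_{\theta+1}$, and uses the standard ``$f\in B_{\theta+1}$, $\dom(f)=\theta\s B_{\theta+1}$, hence $\rng(f)=\trcl(\{x\})\s B_{\theta+1}$'' closure argument; this pins down the exact level $\theta+1$ without appealing to regularity, and is the same device that reappears in the inaccessible-case Lemma~\ref{slow}, so it keeps the two lemmas uniform. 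You instead argue existentially: since $|\trcl(\{x\})|<\kappa$, $\kappa$ is regular, and $\langle B_\beta\mid\beta<\kappa\rangle$ is $\s$-increasing with union $M_\alpha\supseteq\trcl(\{x\})$, some level $\beta<\kappa$ absorbs all of $\trcl(\{x\})$. From there, both proofs run the same collapse argument ($\pi_\beta$ is the identity on the transitive set $\trcl(\{x\})$) and conclude $\bar\beta<\kappa$ from $|M_{\bar\beta}|=|B_\beta|<|\alpha|=\kappa$. One small remark: your use of $\{x\}$ as the predicate $F_0$ together with the elementarity observation that $x\in B_\beta$ for all $\beta$ is actually redundant — once you have $\trcl(\{x\})\s B_\beta$ via the cofinality argument you already have $x\in B_\beta$ — so you could equally well have applied $\lcc$ with trivial $\vec{\mathcal F}$; whereas the paper's $\vec{\mathcal F}=\langle(f,2)\rangle$ does genuine work, because it is precisely what forces $\rng(f)\s B_{\theta+1}$.
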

\begin{proof}
As $\kappa$ is a successor cardinal, $\vec{M}$ is eventually slow at $\kappa$ iff $M_{\kappa}=H_{\kappa}$.
Thus, by Lemma~\ref{obvious}, it suffices to verify that $H_{\kappa}\s M_{\kappa}$.
To this end, let $x \in H_{\kappa}$, and we will find $\beta < \kappa$ such that $ x \in M_{\beta}$.

Set $\theta := |\trcl\{x\}|$ and fix a witnessing bijection $f:\theta \leftrightarrow \trcl\{x\}$.
As $H_{\kappa^{+}}=M_{\kappa^{+}}=\bigcup_{\alpha< \kappa^{+}} M_{\alpha}$, we may fix $\alpha < \kappa^+$ such that $\{f,\theta,\trcl\{x\}\} \subseteq M_{\alpha}$.
Let $\vec{\mathfrak{B}}$ witness $\lcc(\kappa,\kappa^+)$ at $\alpha$ with respect to $\vec M$ and $\vec{\mathcal{F}}:=\langle (f,2) \rangle$.
Let $\beta<\kappa^+$ be such that $\clps(\mathfrak B_{\theta+1})=\langle M_{\beta},{\in},\ldots\rangle$.
\begin{claim} $\theta<\beta<\kappa$.
\end{claim}
\begin{proof}
By Definition~\ref{LCCupto}(3)(c), $\theta+1\s B_{\theta+1}$, so that, $\theta<\beta$.
By Clause~(4) of Definition~\ref{nicefiltration} and by Definition~\ref{LCCupto}(3)(c), $|\beta|=|M_\beta|=|B_{\theta+1}|<|\alpha|\le\kappa$.
\end{proof}

Now, as $$\mathfrak B_{\theta+1}\prec  \langle H_{\kappa^{+}},{\in}, \vec{M},F_0 \rangle \models \exists y(\forall\alpha\forall\delta(F_{0}(\alpha,\delta) \leftrightarrow (\alpha,\delta) \in y)),$$
we have $f\in B_{\theta+1}$. Since $\dom(f)\s\ B_{\theta+1}$, $\rng(f)\s B_{\theta+1}$. But $\rng(f)=\trcl(\{x\})$ is a transitive set, so that the Mostowski collapsing map $\pi:B_{\theta+1}\rightarrow M_\beta$ is the identity over $\trcl(\{x\})$,
meaning that  $x\in\trcl(\{x\}) \subseteq M_{\beta}$.
\end{proof}

\begin{lemma}\label{slow} Suppose that $\kappa$ is an inaccessible cardinal, $\mu < \kappa$ and $\vec{M}=\langle M_\beta\mid\beta<\kappa^+\rangle$
witnesses that $\lcc(\mu,\kappa^{+})$ holds.
Then $\mu$ witnesses that $\vec{M}$ is eventually slow at $\kappa$.
\end{lemma}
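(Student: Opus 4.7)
The plan is to establish $H_\theta\s M_\theta$ for every infinite cardinal $\theta$ with $\mu<\theta\le\kappa$; the reverse inclusion is Lemma~\ref{obvious}. I would follow the blueprint of Lemma~\ref{MH2}: fix an arbitrary $x\in H_\theta$, set $\theta':=|\trcl(\{x\})|$ (so $\theta'<\theta$), fix a bijection $f\colon\theta'\leftrightarrow\trcl(\{x\})$, and pick $\alpha_0<\kappa^+$ with $f\in M_{\alpha_0}$ (available because $\vec{M}$ is a nice filtration of $H_{\kappa^+}$). The complication compared to Lemma~\ref{MH2} is that $\kappa$ is only inaccessible and $\theta$ may be a limit cardinal, so a single application of $\lcc(\mu,\kappa^+)$ at $\alpha_0$ need not already drop $|\alpha_0|$ below $\theta$. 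My plan is therefore to iterate the condensation.

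The recursion produces a sequence $\alpha_0>\alpha_1>\cdots$ in $(\theta',\kappa^+)$ with $f\in M_{\alpha_i}$ and $|\alpha_{i+1}|<|\alpha_i|$. At stage $i$, if $\alpha_i<\theta$ I halt; otherwise $\alpha_i\ge\theta>\mu$ makes $\lcc(\mu,\kappa^+)$ applicable at $\alpha_i$, and the bound $|\alpha_i|\ge\theta>\theta'$ ensures that the entry $\mathfrak{B}_{\theta'+1}$ of the condensation sequence obtained from $\vec{\mathcal{F}}=\langle(f,2)\rangle$ is well defined. The elementarity argument of Lemma~\ref{MH2} then places $f\in B_{\theta'+1}$, and letting $\alpha_{i+1}$ be the ordinal with $\clps(\mathfrak{B}_{\theta'+1})=\langle M_{\alpha_{i+1}},{\in},\ldots\rangle$ supplies $|\alpha_{i+1}|<|\alpha_i|$ and $\alpha_{i+1}>\theta'$. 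Since the cardinals $|\alpha_i|$ cannot strictly descend forever, the recursion terminates at some finite $n$ with $\alpha_n<\theta$, and the transitivity of $M_{\alpha_n}$ yields $x\in\trcl(\{x\})=\rng(f)\s M_{\alpha_n}\s M_\theta$.

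The delicate step --- and the one I expect to require the most care --- is to verify that $f$ itself is genuinely preserved through each collapse, so the iteration can be fed the same $f$ at every stage. This reduces to the observation that the Mostowski collapse $\pi\colon B_{\theta'+1}\to M_{\alpha_{i+1}}$ is the identity on $\theta'+1\s B_{\theta'+1}$ (by Definition~\ref{LCCupto}(3)(c)) and, via $f\in B_{\theta'+1}$ and $\dom(f)=\theta'\s B_{\theta'+1}$, also on the transitive set $\rng(f)=\trcl(\{x\})\s B_{\theta'+1}$; hence $\pi(f)=f$. This is the very same identity-on-transitive-sets step that concludes the proof of Lemma~\ref{MH2}, now applied repeatedly.
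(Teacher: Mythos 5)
Your argument is correct, and it reaches the same conclusion via the same core condensation step (applying $\lcc(\mu,\kappa^+)$ at an ordinal $\alpha$ with $\vec{\mathcal F}=\langle(f,2)\rangle$ to drop the index, then using that the collapse is the identity on the transitive set $\trcl(\{x\})$), but you organize it as a forward finite iteration rather than a single step inside a minimality argument. The paper instead fixes, towards a contradiction, the \emph{least} $\alpha$ with $x\in M_\alpha$; one application of condensation produces $\beta<\alpha$ with $x\in M_\beta$, immediately contradicting minimality. That shortcut has a payoff you don't get for free: the paper only needs $x\in M_\beta$, whereas your iteration must additionally carry $f$ itself through the collapse to feed into the next stage, which is why you need the extra claim $\pi(f)=f$. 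That claim is true, but the justification as written (identity on $\theta'+1$ and on $\trcl(\{x\})$ ``hence'' identity on $f$) skips a small step: one should observe that $g:=\pi(f)$ is again a function with $\dom(g)=\pi(\theta')=\theta'$ and, for each $\beta<\theta'$, $g(\beta)=\pi(f(\beta))=f(\beta)$ since $\pi$ fixes $\rng(f)$ pointwise, so $g=f$. With that small amplification your iterative version is a correct, if slightly longer, alternative to the paper's minimality argument; the termination bookkeeping (strictly descending cardinals $|\alpha_i|$, the invariants $\alpha_i>\theta'$ and $\alpha_i>\mu$, halting once $\alpha_n<\theta$) is all sound.
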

\begin{proof} Suppose not. It follows from Lemma~\ref{obvious} that we may fix an infinite cardinal $\theta$ with $\mu\le\theta<\kappa$
along with $x \in H_{\theta^{+}} \setminus M_{\theta^{+}}$.
Fix a surjection $f:\theta\rightarrow\trcl(\{x\})$.
Let $\alpha < \kappa^{+}$ be the least ordinal such that $x \in M_{\alpha}$, so that $\mu<\theta^+<\alpha<\kappa^+$.
Let $\vec{\mathfrak{B}}$ witness $\lcc(\mu,\kappa^+)$ at $\alpha$ with respect to $\vec M$ and $\vec{\mathcal{F}}:=\langle (f,2) \rangle$.
Let $\beta<\kappa^+$ be such that $\clps(\mathfrak B_{\theta+1})=\langle M_{\beta},{\in},\ldots\rangle$.
\begin{claim} $\beta<\alpha$.
\end{claim}
\begin{proof}
By Clause~(4) of Definition~\ref{nicefiltration} and by Definition~\ref{LCCupto}(3)(c), $|\beta|=|M_\beta|=|B_{\theta+1}|<|\alpha|$.
and hence $\beta<\alpha$.
\end{proof}

By the same argument used in the proof of Lemma~\ref{MH2}, $x \in M_{\beta}$,
contradicting the minimality of $\alpha$.
\end{proof}

\begin{question}
Notice that if $\kappa$ is an inaccessible cardinal and $\vec{M}=\langle M_{\beta} \mid \beta < \kappa^{+} \rangle $ is such that  $\langle H_{\kappa^{+}}, \in, \vec{M} \rangle \models \lcc(\kappa,\kappa^{+})$,
then, for club many $\beta < \kappa$, $M_{\beta}=H_{\beta}$.
We ask: is it consistent that $\kappa$ is an inaccessible cardinal, $\vec{M}=\langle M_{\beta} \mid \beta < \kappa^{+} \rangle $ is such that $\langle H_{\kappa^{+}},{\in}, \vec{M} \rangle \models \lcc(\kappa,\kappa^{+})$, yet, for stationarily many $\beta < \kappa$, $M_{\beta^{+}} \subsetneq H_{\beta^{+}}$?
\end{question}

\begin{lemma} \label{HvecM1} Suppose that $\vec{M}=\langle M_\beta\mid\beta<\kappa^+\rangle$ is a nice filtration of $H_{\kappa^+}$.
Given a sequence $\vec{\mathcal{F}} = \langle (F_{n},k_{n}) \mid n \in \omega \rangle$ such that, for all $n \in \omega$, $k_{n} \in \omega$ and $F_{n} \subseteq (H_{\kappa^+})^{k_{n}}$,
there are club many $\delta<\kappa^+$ such that
$\langle M_{\delta},{\in}, \vec{M}\restriction \delta, (F_n\cap(M_\delta)^{k_n})_{n\in\omega} \rangle \prec \langle M_{\kappa^+}, {\in}, \vec{M}, (F_n)_{n\in\omega}\rangle $.
\end{lemma}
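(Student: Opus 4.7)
The plan is a standard elementary-chain argument. Build recursively a continuous $\s$-increasing sequence $\langle \mathfrak{A}_\gamma \mid \gamma < \kappa^+ \rangle$ of elementary submodels of the structure $\mathfrak{N} := \langle M_{\kappa^+}, {\in}, \vec M, (F_n)_{n\in\omega} \rangle$, arranging that $|A_\gamma| \le |\gamma| + \aleph_0$, $\gamma \s A_\gamma$, and $\bigcup_{\gamma < \kappa^+} A_\gamma = M_{\kappa^+}$. To this end, fix an enumeration $\langle x_\gamma \mid \gamma < \kappa^+ \rangle$ of $M_{\kappa^+}$, and at each successor stage $\gamma+1$ close the submodel under definable Skolem functions while adjoining $\gamma$ and $x_\gamma$; take unions at limits. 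The crucial observation is that $\gamma \mapsto A_\gamma \cap \ord$ is continuous, nondecreasing, and satisfies $A_\gamma \cap \ord \ge \gamma$, so a standard diagonal argument shows
$$C := \{\delta < \kappa^+ \mid A_\delta \cap \ord = \delta\}$$
is a club in $\kappa^+$; intersecting with $\acc(\kappa^+)$, we may further assume every $\delta \in C$ is a limit ordinal.

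The heart of the argument is to show that $A_\delta = M_\delta$ for every $\delta \in C$. For the $\s$ direction, fix $x \in A_\delta$; since $\bigcup_{\beta < \kappa^+} M_\beta = M_{\kappa^+}$, the structure $\mathfrak{N}$ satisfies the formula $\exists \beta\,(x \in \vec M(\beta))$, so elementarity yields some $\beta \in A_\delta \cap \ord = \delta$ with $x \in M_\beta$; since $\vec M$ is $\in$-increasing with transitive terms, $M_\beta \s M_\delta$, so $x \in M_\delta$. For the $\supseteq$ direction, fix $y \in M_\delta$; by continuity of $\vec M$ at the limit $\delta$, pick $\beta < \delta$ with $y \in M_\beta$. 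Using clause~(4) of Definition~\ref{nicefiltration}, $\mathfrak{N}$ sees a surjection $g : |\beta| + \aleph_0 \to M_\beta$, so by elementarity some such $g$ lies in $A_\delta$. Since $|\beta| + \aleph_0 < \delta \s A_\delta$, elementarity applied once more yields $g(\xi) \in A_\delta$ for every $\xi < |\beta| + \aleph_0$; thus $M_\beta \s A_\delta$, and in particular $y \in A_\delta$.

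Once $A_\delta = M_\delta$ is established, the interpretations of the auxiliary predicates of $\mathfrak{A}_\delta$ are seen to be precisely $\vec M \restriction \delta$ and $F_n \cap (M_\delta)^{k_n}$ --- using that $M_\beta \in A_\delta$ whenever $\beta \in A_\delta \cap \ord$, which is immediate from elementarity and the presence of $\vec M$ in the language --- so $\mathfrak{A}_\delta \prec \mathfrak{N}$ translates verbatim to the desired elementarity. I expect the only mildly delicate point to be the $\supseteq$-direction above, where abstract elementarity alone is insufficient and one must additionally exploit the transitivity of $M_\beta$ together with the cardinality bound $|M_\beta| \le |\beta| + \aleph_0 < \delta$ to pull all of $M_\beta$ into $A_\delta$.
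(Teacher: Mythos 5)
Your proof is correct and follows essentially the same route as the paper: build a continuous $\s$-increasing elementary chain of small submodels converging to $M_{\kappa^+}=H_{\kappa^+}$, then argue that this chain agrees with $\vec M$ on a club of $\delta<\kappa^+$. Where the paper compresses the club step into a ``standard back-and-forth argument'' interleaving the two sequences, you make it explicit by taking $C=\{\delta\mid A_\delta\cap\ord=\delta\}$ and verifying $A_\delta=M_\delta$ directly from transitivity and the cardinality bound in Definition~\ref{nicefiltration}; that is a sound way to carry it out (just note that for the $\supseteq$-direction you also want $\delta>\omega$, so that $|\beta|+\aleph_0<\delta$ for all $\beta<\delta$).
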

\begin{proof}   Build by recursion an $\in$-increasing continuous sequence $\vec{\mathfrak B}=\langle\mathfrak{B}_{\beta} \mid \beta < \kappa^{+}\rangle$
of elementary submodels of $\langle M_{\kappa^+},{\in},\vec{M}, (F_n)_{n\in\omega} \rangle$, such that:
\begin{itemize}
\item[$\bullet$] for each $\beta < \kappa^{+}$, $|B_{\beta}|<\kappa^{+}$, and
\item[$\bullet$] $\bigcup_{\beta < \kappa^{+}} B_{\beta} = H_{\kappa^{+}}$.
\end{itemize}

By a standard back-and-forth argument, utilizing the continuity of $\vec{\mathfrak B}$ and $\vec M$, $\{ \delta<\kappa^+\mid B_\delta=M_\delta\}$ is a club in $\kappa^+$.
\end{proof}

\begin{defn}\label{formallcc}
Suppose $\vec{M}=\langle  M_{\beta} \mid \beta < \lambda \rangle $ is a nice filtration of $M_{\lambda}$ for some limit ordinal $\lambda>0$.
Given $\alpha < \lambda$ and $\vec{\mathcal F}=\langle (F_{n},k_{n} ) \mid n \in \omega \rangle$ in $M_{\lambda}$ such that, for each $n \in \omega$, $k_n\in\omega$ and $F_{n} \subseteq(M_{\alpha})^{k_{n}}$,
for every sequence $\vec{\mathfrak{B}} = \langle \mathfrak{B}_{\beta} \mid \beta < |\alpha| \rangle $ in $M_\lambda$
and every letter $l\in\{a,b,c,d,e\}$, we let $\psi_l(\vec{\mathcal B},\vec{\mathcal{F}},\alpha,\vec{M}\restriction(\alpha+1))$ be some formula expressing that Clause~(3)(l) of Definition~\ref{LCCupto} holds.
\end{defn}

The following forms the main result of this section.

\begin{thm}\label{diamond_from_lcc}
Suppose that $\kappa$ is a regular uncountable cardinal,
and $\vec{M}=\langle M_\beta\mid\beta<\kappa^+\rangle$ is a nice filtration of $H_{\kappa^+}$
that is eventually slow at $\kappa$, and witnesses that $\lcc(\kappa,\kappa^+)$ holds.
Suppose further that there  is a subset $ a\s\kappa$ and a formula $ \Theta \in \Sigma_{\omega}$ which defines a well-order $<_\Theta$ in $H_{\kappa^{+}}$ via $ x <_\Theta y $ iff $ H_{\kappa^{+}} \models \Theta(x,y,a)$. Then, for every stationary $ S \subseteq \kappa$, $\dl^*_S(\Pi^{1}_{2})$ holds.
\end{thm}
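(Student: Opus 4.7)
The plan is to define the sequence $\vec N = \langle N_\alpha \mid \alpha \in S \rangle$ directly from the given filtration by setting $N_\alpha := M_{\sigma(\alpha)}$ for some canonically chosen index $\sigma(\alpha)\in[\alpha+1,\kappa)$ (e.g., $\sigma(\alpha)=\alpha+1$ suffices for the typical nice filtrations that arise from Facts~\ref{InaccForcing}, \ref{forcing}, \ref{NoSubcompact}), and then to verify the three clauses of Definition~\ref{reflectingdiamond} by invoking $\lcc(\kappa,\kappa^+)$, the eventual slowness of $\vec M$ at $\kappa$, and the $\Sigma_\omega$-definable well-ordering. Clause~(1) follows at once from niceness: each $M_{\sigma(\alpha)}$ is a transitive p.r.-closed set of cardinality $|\alpha|<\kappa$ that contains $\alpha$ as an element.

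For Clause~(2), given $X\s\kappa$, use $M_{\kappa^+}=H_{\kappa^+}$ to pick $\delta$ in the open interval $(\kappa,\kappa^+)$ with $|\delta|=\kappa$ and $X\in M_\delta$. Apply $\lcc(\kappa,\kappa^+)$ at $\delta$ with $\vec{\mathcal F}$ encoding $X$, $\kappa$, and the well-order parameter $a$, obtaining a $\s$-increasing continuous chain $\vec{\mathfrak B}=\langle\mathfrak B_\beta\mid\beta<\kappa\rangle$ of elementary submodels of $\langle M_\delta,{\in},\vec M\restriction\delta,X,\kappa,a\rangle$ with $\clps(\mathfrak B_\beta)=\langle M_{\bar\beta},{\in},\vec M\restriction\bar\beta\rangle$. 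By continuity and Definition~\ref{LCCupto}(3)(c), the set $C=\{\beta<\kappa\mid B_\beta\cap\kappa=\beta\}$ is a club, and on $C$ the collapse $\pi_\beta$ is the identity on $\beta$. Hence $\pi_\beta(X)=X\cap\beta\in M_{\bar\beta}$; a continuity and absoluteness argument relying on eventual slowness further thins $C$ to a club on which $M_{\bar\beta}\s M_{\sigma(\beta)}=N_\beta$, yielding $X\cap\beta\in N_\beta$.

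The central task is Clause~(3). Given a $\Pi^1_2$-sentence $\phi=\forall X\exists Y\varphi$ and $(A_n)_{n\in\omega}$ with $\langle\kappa,{\in},(A_n)_n\rangle\models\phi$, unfolding yields $H_{\kappa^+}\models\forall X\s\kappa^{m(\mathbb X)}\exists Y\s\kappa^{m(\mathbb Y)}[\langle\kappa,{\in},X,Y,(A_n)_n\rangle\models\varphi]$. By Lemma~\ref{HvecM1}, pick $\delta\in(\kappa,\kappa^+)$ with $|\delta|=\kappa$ and $\langle M_\delta,{\in},\vec M\restriction\delta,(A_n)_n,\kappa,a\rangle\prec\langle H_{\kappa^+},{\in},\vec M,(A_n)_n,\kappa,a\rangle$; then $M_\delta$ also satisfies that universal-existential sentence. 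Apply $\lcc(\kappa,\kappa^+)$ at $\delta$ with $\vec{\mathcal F}$ encoding $(A_n)_n$, $\kappa$, and $a$, and let $\vec{\mathfrak B}$ be the resulting chain. By elementarity each $\mathfrak B_\beta$ satisfies the same sentence, and on the club where $B_\beta\cap\kappa=\beta$ the collapse $\pi_\beta$ sends $\kappa$ to $\beta$ and each $A_n$ to $A_n\cap\beta^{m(\mathbb A_n)}$, so transferring along $\pi_\beta$ gives $M_{\bar\beta}\models\forall X\s\beta^{m(\mathbb X)}\exists Y\s\beta^{m(\mathbb Y)}[\langle\beta,{\in},X,Y,(A_n\cap\beta^{m(\mathbb A_n)})_n\rangle\models\varphi]$, which is precisely $\langle\beta,{\in},(A_n\cap\beta^{m(\mathbb A_n)})_n\rangle\models_{M_{\bar\beta}}\phi$. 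Thinning once more to the club where $M_{\bar\beta}\s N_\beta$ lifts this to $N_\beta$, and intersecting with the stationary $S$ produces the required stationarily many witnesses, with $|N_\beta|=|\beta|$ coming from niceness.

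The main obstacle is showing that, on a club of $\beta<\kappa$, the collapse-level $M_{\bar\beta}$ is contained in the preset level $N_\beta=M_{\sigma(\beta)}$, \emph{uniformly} over all $\vec{\mathcal F}$, $\phi$, and $(A_n)_n$. This is where the twin hypotheses enter: the definable well-ordering via $\Theta$ and $a$ allows $\vec{\mathfrak B}$ to be chosen canonically as the $<_\Theta$-least chain satisfying $\lcc$ at $\delta$ for the given input, so that the $B_\beta$'s are as small as possible and the collapse-indices $\bar\beta$ are controlled by $\beta$; while eventual slowness of $\vec M$ at $\kappa$ ensures $M_\kappa=H_\kappa$, confining the ordinals of $B_\beta$ to a narrow band above $\beta$. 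Together, these make the single fixed choice $\sigma(\alpha)=\alpha+1$ simultaneously serve Clauses~(2) and~(3) for all inputs.
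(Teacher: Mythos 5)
Your proposal has a fundamental gap at the point you yourself flag as "the main obstacle." Setting $N_\alpha := M_{\sigma(\alpha)}$ with $\sigma(\alpha)=\alpha+1$ cannot work, for a concrete reason: in the $\lcc$ argument, when you apply $\lcc(\kappa,\kappa^+)$ at some $\delta\in(\kappa,\kappa^+)$ and pass to the club $C=\{\alpha<\kappa\mid B_\alpha\cap\kappa=\alpha\}$, the collapse $\clps(\mathfrak B_\alpha)=\langle M_{\bar\beta},{\in},\ldots\rangle$ produces an index $\bar\beta$ which, since $\alpha\s B_\alpha$ and $B_\alpha\cap\kappa=\alpha$ while $B_\alpha\prec M_\delta\models\text{``}\kappa\text{ is the largest cardinal''}$, necessarily satisfies $\alpha<\bar\beta<\alpha^+$ with $|\bar\beta|=|\alpha|$, and $\bar\beta$ depends on $\delta$ and climbs arbitrarily high below $\alpha^+$ as $\delta\to\kappa^+$. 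Since $M_{\alpha+1}\cap\ord=\alpha+1$, no club-thinning can force $M_{\bar\beta}\s M_{\alpha+1}$; indeed $M_{\bar\beta}$ contains ordinals far above $\alpha+1$. Neither the eventual-slowness hypothesis (which only says $M_\theta=H_\theta$ for cardinals $\theta$, not intermediate levels) nor the $<_\Theta$-canonical choice of $\vec{\mathfrak B}$ (which makes the chain well-defined, not short) pushes $\bar\beta$ down to $\alpha+1$. This already breaks Clause~(2): for a given $X\s\kappa$ the natural witness to $X\cap\alpha\in M_{\bar\beta}$ lives at a level your fixed $N_\alpha$ cannot reach. It also makes Clause~(1) doubtful, since niceness alone does not guarantee $M_{\alpha+1}$ is p.r.-closed.

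The deeper omission is that you never use, or apparently recognize the need for, an anti-ineffability sequence. The paper first passes from $S'$ to a stationary non-ineffable $S\s S'$ and fixes $\vec Z=\langle Z_\alpha\mid\alpha\in S\rangle$ witnessing non-ineffability. This $\vec Z$ is what makes the definition of $N_\alpha$ possible at all: the level $f(\alpha)$ is defined via a set $S_\alpha$ of "candidate" indices $\beta$ where, among other conditions (local $\lcc$, $\zf^-$, ``$\alpha$ is the largest cardinal'', stationarity of $S\cap\alpha$, definable well-order), the crucial Clause~(v) demands $\vec Z\restriction(\alpha+1)\notin M_\beta$, and $f(\alpha):=\sup(S_\alpha)$ when $S_\alpha$ has no largest element. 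It is precisely Subclaim~\ref{nsclaim} (the step that $\vec Z\restriction(\alpha+1)\notin M_{\beta(\alpha)}$) that bounds $\beta(\alpha)<f(\alpha)$ and delivers Clause~(2); and it is the careful $\omega$-sequence $\langle\delta_n\rangle$ of reflecting levels, coherently collapsed via the $<_\Theta$-least witnesses, that pins down $\sup_n\beta_n=\sup(S_{\alpha'})=f(\alpha')$ for Clause~(3). Your proposal substitutes a fixed affine shift for this machinery, which does not produce a single sequence $\vec N$ that works uniformly over all $X$ and all $\Pi^1_2$ inputs. You would need, as the paper does, a choice of $N_\alpha$ that is (i) large enough to absorb all the collapse levels arising from Clause~(2), yet (ii) small enough to equal the limit of the reflecting levels in Clause~(3), and the anti-ineffability witness is what threads that needle.
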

\begin{proof} Let $S'\s\kappa$ be stationary.
We shall prove that $\dl^*_{S'}(\Pi^{1}_{2})$ holds
by adjusting  Devlin's proof of Fact~\ref{devlinsthm}.

As a first step, we identify a subset $S$ of $S'$ of interest.
\begin{claim} There exists a stationary non-ineffable subset $S\s S'\setminus\omega$ such that, for every $\alpha\in S'\setminus S$, $|H_{\alpha^+}|<\kappa$.
\end{claim}
\begin{proof} If $S'$ is non-ineffable, then let $S:=S'\setminus\omega$, so that $H_{\alpha^+}=H_\omega$ for all $\alpha\in S'\setminus S$.
From now on, suppose that $S'$ is ineffable.
In particular, $\kappa$ is strongly inaccessible and  $|H_{\alpha^+}|<\kappa$ for every $\alpha<\kappa$.
Let $S:=S'\setminus(\omega\cup T)$, where $$T:=\{\alpha\in\kappa\cap\cof({>}\omega)\mid S'\cap\alpha\text{ is stationary in }\alpha\}.$$

To see that $S$ is stationary, let $E$ be an arbitrary club in $\kappa$.

$\br$ If $S'\cap\cof(\omega)$ is stationary, then since $S'\cap\cof(\omega)\s S$, we infer that $S\cap E\neq\emptyset$.

$\br$ If $S'\cap\cof(\omega)$ is non-stationary, then fix a club $C\s E$ disjoint from $S'\cap\cof(\omega)$,
and let $\alpha:=\min(\acc(C)\cap S')$. Then $\cf(\alpha)>\omega$ and $C\cap\alpha$ is a club in $\alpha$ disjoint from $S'$, so that $\alpha\notin T$.
Altogether, $\alpha\in S\cap E$.

To see that $S$ is non-ineffable, we define a sequence $\langle Z_\alpha\mid\alpha\in S\rangle$, as follows.
For every $\alpha\in S$, fix a closed and cofinal subset $Z_\alpha$ of $\alpha$ with $\otp(Z_\alpha)=\cf(\alpha)$ such that, if $\cf(\alpha)>\omega$,
then the club $Z_\alpha$ is disjoint from $S'\cap\alpha$.
Towards a contradiction, suppose that $Z\s\kappa$ is a set for which $\{\alpha\in S\mid Z\cap\alpha=Z_\alpha\}$ is stationary.
Clearly, $Z$ is closed and cofinal in $\kappa$, so that $Z\cap S'$ is stationary, $\otp(Z\cap S')=\kappa$ and hence $D:=\{\alpha<\kappa \mid \otp(Z\cap S'\cap\alpha)=\alpha>\omega\}$ is a club.
Pick $\alpha\in D\cap S$ such that $Z\cap\alpha=Z_\alpha$. As $$\cf(\alpha)=\otp(Z_\alpha)=\otp(Z\cap\alpha)\ge\otp(Z\cap S'\cap\alpha)=\alpha>\omega,$$
it must be the case that $Z_\alpha$ is a club disjoint from $S'\cap\alpha$, while $Z_\alpha=Z\cap\alpha$ and $Z\cap S'\cap\alpha\neq\emptyset$. This is a contradiction.
\end{proof}

Let $S$ be given by the preceding claim.
We shall focus on constructing a sequence $\langle N_{\alpha} \mid \alpha \in S  \rangle $ witnessing $\dl^*_{S}(\Pi^{1}_{2})$
such that, in addition, $|N_\alpha|=|\alpha|$ for every $\alpha\in S$.
It will then immediately follow that the sequence $\langle N'_\alpha\mid\alpha\in S'\rangle$ defined by letting $N_\alpha':=N_\alpha$ for $\alpha\in S$,
and $N'_\alpha:=H_{\alpha^+}$ for $\alpha\in S'\setminus S$ will witness the validity of $\dl^*_{S'}(\Pi^{1}_{2})$. As $\vec M$ is eventually slow at $\kappa$,
we may also assume that, for every $\alpha\in S$, $M_{\alpha^{+}}=H_{\alpha^{+}}$ and the conclusion of Lemma~\ref{MvsH} holds true.\footnote{For all the small $\alpha\in S'\setminus S$ such that $M_{\alpha^+}\neq H_{\alpha^+}$, simply let $N'_\alpha:=N_{\min(S)}$.}
If $\kappa$ is a successor cardinal, we may moreover assume that, for every $\alpha\in S$, $M_{\alpha^+}=H_\kappa$.

Here we go.
As $S$ is non-ineffable, fix a sequence $\vec{Z} = \langle Z_{\alpha} \mid \alpha \in S \rangle $ with $Z_\alpha\s\alpha$ for all $\alpha\in S$,
such that, for every $Z\s\kappa$, $\{\alpha\in S\mid Z\cap\alpha=Z_\alpha\}$ is nonstationary.
In the course of the rest of the proof, we shall occasionally take witnesses to $\lcc$ at some ordinal $\alpha$ with respect to
$\vec M$ and a finite sequence $\vec{\mathcal F}=\langle (F_n,k_n)\mid n\in 4\rangle$; for this, we introduce the following piece of notation for any positive $m<\omega$, $X\s(\kappa^+)^m$ and $\alpha<\kappa^+$:
$$\vec{\mathcal F}_{X,\alpha}:=\langle (X\cap\alpha^m,m),(a\cap\alpha,1),(S\cap\alpha,1),(\vec Z\restriction\alpha,2)\rangle.$$

Next, for each $\alpha \in S$, we define $S_{\alpha}$ to be the set of all $\beta \in \alpha^{+}$ satisfying the following list of conditions:
\begin{enumerate}[(i)]
\item $\langle M_{\beta},{\in}, \vec{M} \restriction \beta \rangle \models \lcc(\alpha,\beta)$,\footnote{Note that $\beta$ is not needed to define $\lcc(\alpha,\beta)$ in the structure $\langle M_{\beta},{\in}, \vec{M} \restriction \beta \rangle$.
Indeed, by $\lcc(\alpha,\beta)$ we mean $\psi_{1}(\alpha)$ as in Remark~\ref{NotationRemark}.}
\item $\langle M_{\beta},{\in}\rangle\models \zf^{-} ~  \& ~ \alpha \text{ is the largest cardinal}$,\footnote{In particular, $\langle M_{\beta},{\in}\rangle\models\alpha\text{ is uncountable}$.}
\item $\langle M_{\beta},{\in}\rangle\models \alpha\text{ is regular}\ \&\ S \cap \alpha ~ \text{is stationary}$,
\item $\langle M_{\beta},{\in}\rangle\models  \Theta(x,y,a\cap \alpha) ~ \text{defines a global well-order}$,
\item $ \vec{Z} \restriction (\alpha + 1) \notin M_{\beta}$.
\end{enumerate}
Then, we consider the set
$$D := \{ \alpha \in S \mid S_{\alpha} \neq \emptyset\ \& \ S_{\alpha} \text{ has no largest element} \}.$$

Define a function $f:S\rightarrow\kappa$ as follow.
For every $\alpha \in D $, let $f(\alpha) := \sup(S_{\alpha})$; for every $\alpha\in S\setminus D$,
let $f(\alpha)$ be the least $\beta< \kappa$ such that $M_{\beta}$ sees $\alpha$, and $\vec{Z} \restriction ( \alpha+1 )  \in  M_{\beta}$.

\begin{claim} \label{welldefined}  $f$ is well-defined. Furthermore, for all $\alpha\in S $, $\alpha<f(\alpha) < \alpha^+$.
\end{claim}
\begin{proof} Let $\alpha\in S$ be arbitrary. The analysis splits into two cases:

$\br$  Suppose $\alpha\in D$. As $\alpha \in S $, we have $ \bigcup_{\beta < \alpha^{+}} M_{\beta} = M_{\alpha^{+}}=H_{\alpha^{+}}$, and hence we may find some $\beta<\alpha^{+}$ such that $\vec{Z}\restriction(\alpha+1) \in M_{\beta}$.
Then, condition~(v) in the definition of $ S_{\alpha}$ implies that $\alpha<f(\alpha)\le\beta<\alpha^+$.

$\br$  Suppose $\alpha\notin D$.  As $\alpha\in S$, let us fix $\langle \mathfrak{B}_{\beta} \mid \beta < \alpha^+ \rangle $ that witnesses $\lcc$ at $\alpha^+$ with respect to $\vec M$ and $\vec{\mathcal F}_{\emptyset,\alpha^+}$.
Set $\beta:=\alpha+2$ and fix ${\bar\beta}<\kappa^+$ such that $\clps(\mathcal B_\beta)=\langle M_{\bar\beta},\ldots\rangle$.
As $\beta\s B_\beta$ and $|B_\beta|<\alpha^+$, by Clause~(4) of Definition~\ref{nicefiltration}, $\beta\le{\bar\beta}<\alpha^+$.
In addition, $\vec{Z} \restriction ( \alpha+1 ) \in M_{{\bar\beta}}$ and there exists an elementary embedding from $\langle M_{{\bar\beta}},{\in}\rangle$ to $\langle H_{\alpha^{+}},{\in}\rangle$,
so that $M_{\bar\beta}$ sees $\alpha$.
Altogether, $\alpha<f(\alpha)\le{\bar\beta}<\alpha^+$.
\end{proof}

Define $\vec{N}= \langle N_{\alpha} \mid \alpha \in S  \rangle $ by letting $N_{\alpha} := M_{f(\alpha)}$ for all $\alpha\in S$.
It follows from Definition~\ref{nicefiltration}(4) and the preceding claim that $|N_\alpha|=|\alpha|$ for all $\alpha\in S$.

\begin{claim}\label{claim2163} Let $X\s\kappa$. Then there exists a club $C\s\kappa$ such that, for all $\alpha\in C\cap S$, $X\cap\alpha \in N_\alpha$.
\end{claim}
\begin{proof}  By Lemma~\ref{HvecM1}, we now fix $\delta < \kappa^{+}$ such that $\kappa,S,a \in M_{\delta} $ and $\langle M_{\delta},{\in},\vec M\restriction\delta\rangle \prec \langle M_{\kappa^{+}},{\in}, \vec{M}\rangle$.
Note that $|\delta|=\kappa$. Let $\vec{\mathfrak B}=\langle\mathfrak B_\alpha\mid\alpha<\kappa\rangle$ witness $\lcc$ at $\delta$ with respect to $\vec M$ and $\vec{\mathcal{F}}_{X,\kappa}$.

\begin{subclaim} $C:= \{ \alpha < \kappa \mid B_{\alpha} \cap \kappa = \alpha  \}$ is a club in $\kappa$.
\end{subclaim}
\begin{proof} To see that $C$ is closed in $\kappa$, fix an arbitrary $\alpha<\kappa$ with $\sup(C \cap \alpha)= \alpha >0$.
As $\langle B_\beta\mid\beta<\kappa\rangle$ is $\subseteq$-increasing and continuous, we have
$$ \alpha = \bigcup_{\beta \in (C\cap \alpha)} \beta = \bigcup_{\beta \in (C \cap \alpha)} (B_{\beta}\cap \kappa)  = \bigcup_{\beta < \alpha} (B_{\beta} \cap \kappa) = B_{\alpha} \cap \kappa.$$

To see that $C$ is unbounded in $\kappa$, fix an arbitrary $\varepsilon<\kappa$, and we shall find $\alpha\in C$ above $\varepsilon$.
Recall that, by Clause~(3)(c) of Definition~\ref{LCCupto}, for each $\beta<\kappa$, $\beta\s B_\beta$ and $|B_\beta|<\kappa$.
It follows that we may recursively construct an increasing sequence of ordinals $ \langle \alpha_{n} \mid n < \omega \rangle $ such that:
\begin{itemize}
\item[$\bullet$] $\alpha_0:=\sup(B_{\varepsilon} \cap \kappa)$, and, for all $n<\omega$:
\item[$\bullet$]  $\sup(B_{\alpha_n} \cap \kappa)<\alpha_{n+1}<\kappa$.
\end{itemize}
In particular, $\sup(B_{\alpha_{n}} \cap \kappa ) \in \alpha_{n+1}$ for all $n<\omega$.
Consequently, for $\alpha:=\sup_{n<\omega}\alpha_{n}$, we have that $\alpha<\kappa$, and
$$ B_{\alpha} \cap \kappa = \bigcup_{n <\omega} (B_{\alpha_{n}} \cap \kappa) \leq \bigcup_{n<\omega} \alpha_{n+1} \leq \bigcup_{n <\omega} (B_{\alpha_{n+2}} \cap \kappa) = \alpha,$$
so that $\alpha\in C\setminus(\varepsilon+1)$.
\end{proof}
To see that the club $C$ is as sought, let $\alpha \in C\cap S$ be arbitrary,
and we shall verify that $X\cap\alpha\in N_\alpha$.
Let $\beta(\alpha)$ be such that $\clps(\mathfrak B_\alpha)=\langle M_{\beta(\alpha)},{\in},\ldots\rangle$,
and let $j_{\alpha}:M_{\beta(\alpha)} \rightarrow B_{\alpha}$ denote the inverse of the collapsing map.
As $\alpha\in C$,  $j_\alpha(\alpha)=\kappa$,
and  $j^{-1}_{\alpha}(Y) = Y \cap \alpha$ for all $Y \in B_{\alpha} \cap \mathcal{P}(\kappa)$.

\begin{subclaim}\label{nsclaim} For every $\beta<\kappa^+$ such that $\vec Z\restriction(\alpha+1)\in M_\beta$, $\beta>\beta(\alpha)$.
\end{subclaim}
\begin{proof} Suppose not, so that $\vec{Z} \restriction (\alpha+1) \in M_{\beta(\alpha)} $.
As $\langle M_{\delta}, {\in} \rangle \prec \langle M_{\kappa^{+}}, {\in} \rangle $, we infer that
\[ \langle M_{\delta},{\in}\rangle\models \forall Z \subseteq \kappa ~ \exists E \text{ club in } \kappa \ (\forall \gamma \in E \cap S \rightarrow Z\cap \gamma \neq Z_{\gamma}),\]
and hence
\[  \langle M_{\beta(\alpha)},{\in}\rangle\models \forall Z \subseteq \alpha ~  \exists E \text{ club in } \alpha \ (\forall \gamma \in E \cap S \rightarrow Z\cap \gamma \neq Z_{\gamma}).\]
In particular, using $ Z := Z_{\alpha}$, we find some $E$ such that
\[  \langle M_{\beta(\alpha)},{\in}\rangle\models  (E \text{ is a club in } \alpha)\land (\forall \gamma \in E \cap S \rightarrow Z_{\alpha} \cap \gamma \neq Z_{\gamma}).\]

Pushing forward with $ E^* := j_{\alpha}(E)$ and $Z^{*}:= j_{\alpha}(Z_{\alpha})$, we infer that
\[ \langle M_{\delta},{\in}\rangle \models (E^* \text{ is a club in } \kappa) \land (\forall \gamma \in E^* \cap S \rightarrow Z^{*} \cap \gamma \neq Z_{\gamma}).\]

Then $Z^{*}\cap \alpha = j_{\alpha}(Z_{\alpha}) \cap \alpha  = Z_{\alpha}$, and hence $\alpha\notin E^*$ (recall that $\alpha\in S$).
Likewise $E^*\cap\alpha=j_\alpha(E)\cap\alpha=E$,  and hence $\alpha\in\acc(E^*)\s E^*$. This is a contradiction.
\end{proof}

Now, since $\vec{\mathfrak B}$ witnesses $\lcc$ at $\delta$ with respect to $\vec M$ and $\vec{\mathcal{F}}_{X,\kappa}$, for each $Y$ in $\{X,a,S\}$, we have that
$$\langle B_{\alpha},{\in},Y\cap B_\alpha \rangle \prec \langle M_{\kappa^{+}},{\in}, Y \rangle \models \exists y\forall z ((z \in y)\leftrightarrow (z\in\kappa  \land  Y(z))),$$
therefore each of $X,a,S$ is a definable element of $\mathfrak B_{\alpha}$. So, as, for all $Y \in B_{\alpha} \cap \mathcal{P}(\kappa)$, $j^{-1}_{\alpha}(Y) = Y \cap \alpha$,
we infer that $X\cap\alpha$, $a\cap \alpha$, and $S \cap \alpha$ are all in $M_{\beta(\alpha)}$.
We will show that $\beta(\alpha) < f(\alpha)$, from which it will follow that $X \cap \alpha \in N_{\alpha}$.
\begin{subclaim}\label{sc61632} $\beta(\alpha) < f(\alpha)$.
\end{subclaim}
\begin{proof} Naturally, the analysis splits into two cases:

$\br$ Suppose $\alpha \notin D$. By definition of $f(\alpha)$ and by Subclaim~\ref{nsclaim}, $\beta(\alpha) < f(\alpha)$.

$\br$ Suppose $\alpha \in D$.
As $\mathfrak B_\alpha\prec \langle M_{\delta},{\in}, \vec{M}\restriction\delta,X,a,S,\vec Z\rangle $
and $\rng(j_\alpha)=B_\alpha$, we infer that
$j_{\alpha}:M_{\beta(\alpha)}\rightarrow M_\delta$ forms an elementary embedding
from $\langle M_{\beta(\alpha)},{\in},\ldots\rangle$ to $\langle M_{\delta},{\in}, \vec{M}\restriction\delta,X,a,S,\vec Z\rangle $ with $j_{\alpha}(\alpha)=\kappa$.
As $\kappa,S,a\in M_{\delta}$ and $ \langle M_{\delta}, {\in}, M \restriction \delta \rangle \prec \langle M_{\kappa}, {\in}, \vec{M} \rangle$, we have:
\begin{enumerate}[(I)]
\item $\langle M_{\delta},{\in}, \vec{M} \restriction \delta \rangle \models \lcc(\kappa,\delta) $,
\item $\langle M_{\delta},{\in}\rangle\models \zf^{-} ~  \& ~ \kappa \text{ is the largest cardinal}$,
\item $\langle M_{\delta},{\in}\rangle\models \kappa\text{ is regular}\ \&\  S \cap \kappa  \text{ is stationary}$,
\item $\langle M_{\delta},{\in}\rangle\models  \Theta(x,y,a\cap \kappa) ~ \text{defines a global well-order}$.
\end{enumerate}
It now follows that $\beta(\alpha)$ satisfies clauses (i),(ii),(iii) and (iv) of the definition of $S_\alpha$.
Together with Subclaim~\ref{nsclaim}, then, $\beta(\alpha) \in S_{\alpha}$. So,
by definitions of $f$ and $D$, $\beta(\alpha)<f(\alpha)$.
\end{proof}
This completes the proof of Claim~\ref{claim2163}.
\end{proof}

We are left with addressing Clause~(3) of Definition~\ref{reflectingdiamond}.

\begin{claim}\label{claim(3)}
The sequence $\langle N_\alpha\mid\alpha\in S\rangle$ reflects $\Pi^{1}_{2} $-sentences.
\end{claim}
\begin{proof}
We need to show that whenever $\langle \kappa,{\in},(A_n)_{n\in\omega}\rangle\models\phi$,
with $\phi=\forall X\exists Y\varphi$ a $\Pi^1_2$-sentence,
for every club $E\s\kappa$, there is $\alpha\in E\cap S$, such that
$$\langle \alpha,{\in},(A_n\cap(\alpha^{m(\mathbb A_n)}))_{n\in\omega}\rangle\models_{N_{\alpha}}\phi.$$
But by adding $E$ to the list $(A_n)_{n\in\omega}$ of predicates,
and by slightly extending the first-order formula $\varphi$ to also assert that $E$ is unbounded,
we would get that any ordinal $\alpha$ satisfying the above will also satisfy that $\alpha$ is an accumulation point of the closed set $E$, so that $\alpha\in E$.
It follows that if any $\Pi^1_2$-sentence valid in a structure of the form $\langle \kappa,{\in},(A_n)_{n\in\omega}\rangle$
reflects to some ordinal $\alpha'\in S$,
then any $\Pi^1_2$-sentence valid in a structure of the form $\langle \kappa,{\in},(A_n)_{n\in\omega}\rangle$
reflects stationarily often in  $S$.

Consider a $\Pi^1_2$-formula $\forall X\exists Y\varphi$,
with integers $p,q$ such that $X$ is a $p$-ary second-order variable and $Y$ is a $q$-ary second-order variable.
Suppose  $\vec A=(A_n)_{n\in\omega}$ is a sequence of finitary predicates on $\kappa$, and $\langle\kappa,{\in},\vec A\rangle\models \forall X\exists Y \varphi$.
By the reduction established in the proof of Proposition~\ref{Prop2.4} below, we may assume that $\vec A$ consists of a single predicate $A_0$ of arity, say, $m_0$.
Recalling Convention~\ref{conv23} and since $M_{\kappa^+}=H_{\kappa^+}$, this altogether means that
$$\langle\kappa,{\in},A_0\rangle\models_{M_{\kappa^+}} \forall X\exists Y \varphi.$$

Let $\gamma$ be the least ordinal such that $\vec{Z},A_0,S\in M_\gamma$. Note that $\kappa<\gamma<\kappa^+$.
Let $\Delta$ denote the set of all $\delta \le \kappa^{+}$ such that:
\begin{enumerate}[(a)]
\item $\langle M_{\delta},{\in}, \vec{M}\restriction \delta \rangle \models \lcc(\kappa,\delta)$,\footnote{In particular, $\delta>\kappa$.}
\item $\langle M_{\delta},{\in}\rangle\models \zf^{-} \  \&\  \kappa ~ \text{is the largest cardinal}$,
\item $\langle M_{\delta},{\in}\rangle\models \kappa\text{ is regular}\ \&\ S \text{ is stationary in } \kappa$,
\item $\langle M_{\delta},{\in}\rangle\models  \Theta(x,y,a) ~ \text{defines a global well-order}$,
\item $\langle\kappa,{\in},A_0\rangle\models_{M_{\delta}} \forall X\exists Y \varphi$,
\item $\langle M_{\delta},{\in}\rangle\models \vec{Z} ~ \text{witness that} ~ S  ~ \text{is not ineffable}$, and
\item $\delta>\gamma$.
\end{enumerate}

As $\kappa^{+} \in \Delta$,
it follows from Lemma~\ref{HvecM1}
and elementarity that $\otp(\Delta\cap\kappa^+)=\kappa^+$.
Let $\{\delta_n\mid n<\omega\}$ denote the increasing enumeration of the first $\omega$ many elements of $\Delta$.
\begin{subdefn}
Let $T(\vec{M},\kappa,S,a,A_0,\vec{Z},\gamma)$ denote the theory consisting of the following axioms:
\begin{enumerate}[(A)]
\item $\vec{M}$ witness $\lcc(\kappa,\kappa^{+})$,
\item $\zf^{-} \  \&\  \kappa  ~ \text{is the largest cardinal}$,
\item $\kappa\text{ is regular}\ \&\ S  \text{ is stationary in }  \kappa$,
\item $\Theta(x,y, a) ~ \text{defines a global well-order}$,
\item $\langle \kappa,{\in}, A_0\rangle\models \forall X\exists Y \varphi$,
\item $\vec{Z}\text{ witness that } S \text{ is not ineffable}$,
\item $\gamma$ is the least ordinal such that $ \{\vec{Z},A_0, S \}\in \vec{M}(\gamma)  $.
\end{enumerate}
\end{subdefn}

Let $n<\omega$. Since $M_{\delta_n}$ is transitive, standard facts (cf.~\cite[Chapter~3, \S5]{drake}) yield the existence of a formula $\Psi$ in the language $\{\dot{\vec{M}},\in\}$
which is $\Delta_{1}^{\zf^{-}}$, and for all $\delta\in(\gamma,\delta_n)$,
\begin{equation}\tag{$\star_1$}
\begin{array}{c} \label{Psi} \langle M_{\delta},{\in}, \vec{M}\restriction \delta \rangle \models T(\vec M \restriction\delta, \kappa,S,a,A_0,\vec{Z},\gamma)
\\\iff\\
\Psi(\vec M \restriction\delta, \kappa,S,a,A_0,\vec{Z},\gamma)
\\\iff\\
\langle M_{\delta_n},{\in},\vec{M}\restriction \delta_n\rangle\models \Psi(\vec M \restriction\delta, \kappa,S,a,A_0,\vec{Z},\gamma).
\end{array}
\end{equation}

Since $\{\delta_k\mid k<\omega\}$   enumerates the first $\omega$ many elements of $\Delta$,
$M_{\delta_{n}}$ believes that there are exactly $ n$ ordinals $\delta$ such that Clauses (a)--(g)  hold for $M_{\delta}$. In fact,
\begin{equation}\label{deltas}\tag{$\star_2$}
\langle M_{\delta_{n}},{\in}, \vec{M} \restriction\delta_{n} \rangle \models   \{\delta \mid  \Psi(\vec M \restriction\delta, \kappa,S,a,A_0, \vec{Z},\gamma) \} = \{\delta_{k} \mid k <n \}.
\end{equation}

Next, for every $ n  < \omega $, as $\langle M_{\delta_{n+1}},{\in}\rangle\models |\delta_{n}|=\kappa$, we may fix in $M_{\delta_{n+1}}$ a sequence
$\vec{\mathfrak{B}_{n}}= \langle \mathcal B_{n,\alpha} \mid \alpha < \kappa \rangle $ witnessing $\lcc$ at $\delta_n$ with respect to $\vec{M}\restriction \delta_{n+1}$ and  $\vec{\mathcal F}_{A_0,\kappa}$
such that, moreover,
$$\langle M_{\delta_{n+1}},{\in}, \vec{M}\restriction \delta_{n+1}\rangle \models ``\vec{\mathfrak{B}_{n}} \text{ is the }  {<_\Theta}\text{-least such witness}".\footnote{Recalling Definition~\ref{formallcc}, this means that
$\langle M_{\delta_{n+1}},{\in}, \vec{M}\restriction \delta_{n+1}\rangle \models ``\vec{\mathfrak{B}_{n}} \text{ is the }  {<_\Theta}\text{-least }\vec{\mathcal{B}}\text{ such that }(\psi_{a} \wedge \psi_{b} \wedge \psi_{c} \wedge \psi_{d} \wedge \psi_{e})(\vec{\mathcal{B}},\vec{\mathcal{F}}_{A_0,\kappa},\delta_n,\vec{M}\restriction(\delta_n+1))"$.}$$

For every $n<\omega$, consider the club $ C_{n}: = \{ \alpha < \kappa \mid B_{n,\alpha} \cap \kappa = \alpha\}$,
and then let $$\alpha':=\min ((\bigcap\nolimits_{n\in \omega} C_{n}) \cap S).$$

For every $n<\omega$, let $\beta_n $ be such that $\clps(\mathfrak B_{n,\alpha'})=\langle M_{\beta_n},{\in},\ldots\rangle$,
and let $j_{n}:M_{\beta_{n}} \rightarrow B_{n,\alpha'} $ denote the inverse of the Mostowski collapse.

\begin{subclaim} Let $n \in \omega$. Then $j_{n}^{-1}(\gamma) = j_{0}^{-1}(\gamma)$.
\end{subclaim}
\begin{proof}
Since
$j^{-1}_{n}(\vec Z)  = \vec Z \restriction \alpha'$, $j^{-1}_{n}(A_0)  = A_0 \cap (\alpha')^{m_0}$  and  $ j^{-1}_{n}(S)  = S \cap \alpha'$,
it follows from
$$\langle M_{\delta_{n}}, {\in} , \vec{M} \restriction \delta_{n} \rangle \models \gamma  \text{ is the least ordinal with } \{ \vec{Z}, A_0, S  \} \subseteq M_{\gamma},$$
that
$$\langle M_{\beta_{n}},{\in}, \vec{M} \restriction \beta_{n} \rangle \models j^{-1}_{n}(\gamma) \text{ is the least ordinal with } \{ \vec{Z}\restriction \alpha', A_0\cap (\alpha')^{m_0}, S \cap \alpha' \} \subseteq M_{\gamma}.$$

Now, let  $\bar{\gamma}$ be such that
$$\langle M_{\beta_{0}},{\in}, \vec{M} \restriction \beta_0 \rangle \models  \bar{\gamma} \text{ is the least ordinal such that } \{ \vec{Z}\restriction \alpha', A_0\cap (\alpha')^{m_0}, S\cap \alpha'  \} \subseteq M_{\bar{\gamma}}. $$
Since $\vec{M}$ is continuous, it follows that $\bar{\gamma}$ is a successor ordinal, that is, $\bar{\gamma}=\sup(\bar\gamma)+1$.
So $\langle M_{\beta_{0}},{\in}, \vec{M} \restriction \beta_0 \rangle $ satisfies the conjunction of the two:
\begin{itemize}
\item[$\bullet$] $\{ \vec{Z}\restriction \alpha', A_0\cap (\alpha')^{m_0}, S\cap \alpha'  \} \subseteq  M_{\bar{\gamma}}$, and
\item[$\bullet$] $\{ \vec{Z}\restriction \alpha', A_0\cap (\alpha')^{m_0}, S\cap \alpha'  \} \not\subseteq  M_{\sup(\bar\gamma)}$.
\end{itemize}
But the two are $\Delta_{0}$-formulas in the parameters $\vec{Z}\restriction \alpha', A_0\cap (\alpha')^{m_0}, S\cap \alpha'  ,M_{\bar\gamma}$ and $M_{\sup(\bar{\gamma})}$, which are all elements of $M_{\beta_0}$.
Therefore,
$$ \langle M_{\beta_{n}},{\in}, \vec{M} \restriction \beta_n \rangle \models \bar{\gamma} \text{ is the least ordinal such that } \{ \vec{Z}\restriction \alpha', A_0\cap (\alpha')^{m_0}, S\cap \alpha'  \} \subseteq M_{\gamma} ,$$
so that $j^{-1}_{n}(\gamma) = \bar{\gamma}= j^{-1}_0(\gamma)$.
\end{proof}

Denote $ \bar{\gamma}:=j^{-1}_0(\gamma)$.
Let $\Psi$ be the same formula used in statement~\eqref{Psi}.
For all $n < \omega$ and $\bar\beta\in(\bar\gamma,\beta_n)$,
setting $\beta:=j_n(\bar{\beta})$, by elementarity of $j_n$:

\begin{equation} \label{PsiBeta}\tag{$\star_3$}
\begin{array}{c} \langle M_{\beta_n},{\in},\vec{M}\restriction \beta_n\rangle\models \Psi(\vec{M} \restriction
\bar{\beta}, \alpha', S\cap\alpha', a\cap \alpha', A_0\cap (\alpha')^{m_0},\vec{Z}\restriction \alpha',\bar{\gamma})
\\ \iff \\
\langle M_{\delta_n}, {\in},\vec{M}\restriction \delta_n \rangle \models \Psi(\vec{M}\restriction \beta,\kappa,S, a, A_0,\vec{Z}, \gamma).
\end{array}
\end{equation}

Hence, for all $n<\omega$, by statements \eqref{deltas} and \eqref{PsiBeta}, it follows that
$$\begin{aligned} \langle M_{\beta_n},{\in}, \vec{M} \restriction\beta_n \rangle
\models~ &
\{\beta   \mid  \Psi(\vec{M} \restriction \beta, \alpha', S \cap \alpha',a\cap \alpha', A_0\cap (\alpha')^{m_0},\vec{Z}\restriction \alpha',\bar{\gamma}) \}  \\
&= \{ j_n^{-1} (\delta_{k}) \mid k < n \},
\end{aligned}$$
and that, for each $k< n $, $j_{n}(\beta_{k})=\delta_{k}$.

\begin{subclaim}$\beta':=\sup_{n \in \omega} \beta_n$ is equal to $\sup(S_{\alpha'})$.
\end{subclaim}
\begin{proof}
For each $n<\omega$, as $\clps(\mathfrak B_{n,\alpha'})=\langle M_{\beta_n},{\in},\ldots\rangle$,
the proof of Subclaim~\ref{sc61632}, establishing that $\beta(\alpha)\in S_\alpha$,
makes clear that $\beta_n \in S_{\alpha'}$.

We first argue that $\beta'\notin S_{\alpha'}$ by showing that $\langle M_{\beta'},{\in}\rangle\not\models\zf^-$,
and then we will argue that no $\beta>\beta'$ is in $S_{\alpha'}$.
Note that  $\{\beta_n \mid n < \omega \}$ is a definable subset of $\beta'$ since it can be defined as the first $\omega$ ordinals to satisfy Clauses (a)--(g),
replacing $\vec M \restriction\delta, \kappa,S,a,A_0,\vec{Z},\gamma$ by $\vec{M} \restriction \beta, \alpha', S \cap \alpha',a\cap \alpha', A_0\cap (\alpha')^{m_0},\vec{Z}\restriction \alpha',\bar{\gamma}$, respectively.
So if $\langle M_{\beta'},{\in}\rangle$ were to model $\zf^-$, we would have get that $\sup_{n<\omega}\beta_n$ is in $M_{\beta'}$, contradicting the fact that $M_{\beta'}\cap\ord=\beta'$.

Now, towards a contradiction, suppose that there exists $\beta>\beta'$ in $S_{\alpha'}$, and let $\beta$ be the least such ordinal.
In particular, $\langle M_{\beta},{\in}\rangle \models \zf^{-}$, and $\langle \beta_n \mid n < \omega \rangle\in M_{\beta}$,
so that $\langle M_{\beta_{n}} \mid n \in \omega \rangle  \in M_{\beta}$.
We will reach a contradiction to Clause~(iii) of the definition of $S_{\alpha'}$,
asserting, in particular, that $S \cap \alpha'$ is stationary in $\langle M_{\beta},{\in}\rangle$.

For each $n<\omega$, we have that
$\langle M_{\delta_{n+1}},{\in}, \vec{M}\restriction \delta_{n+1}\rangle \models \Phi(C_{n},\delta_n,\vec{\mathfrak{B}_{n}}, \kappa ),$
where $\Phi(C_{n},\delta_{n},\vec{\mathfrak{B_{n}}},\kappa)$ is the conjunction of the following two formulas:
\begin{itemize}
\item[$\bullet$] $C_{n} = \{ \alpha < \kappa \mid B_{n, \alpha} \cap \kappa = \alpha \}$, and
\item[$\bullet$] $\vec{\mathfrak{B}_{n}}\text{ is the }  {<_\Theta}\text{-least witness to } \lcc  \text{ at } \delta_n  \text{ with respect to } \vec{M}\restriction\delta_{n+1} \text{ and } \mathcal F_{A_0,\kappa}$.
\end{itemize}
Therefore, for $ \overline{C_{n}}:=j_{n+1}^{-1}(C_{n})$ and $\overline{\mathfrak{B}_{n}}:=j_{n+1}^{-1}(\vec{\mathfrak{B}_{n}})$,
we have
$$\langle M_{\beta_{n+1}},{\in}, \vec{M}\restriction \beta_{n+1}\rangle \models \Phi(\overline{C_{n}},\beta_n,\overline{\mathfrak{B}_{n}}, \alpha').$$
In particular, $\overline{C_n}=j_{n+1}^{-1}(C_{n}) = C_{n}\cap \alpha' $.
Recalling that  $\alpha'=\min ((\bigcap_{n\in \omega} C_{n}) \cap S)$, we infer that
$\bigcap_{n<\omega} \overline{C_{n}}$ is disjoint from $S \cap \alpha'$.
Thus, to establish that $S\cap\alpha'$ is nonstationary, it suffices to verify the two:
\begin{enumerate}[(1)]
\item $\langle \overline{C_n}\mid n<\omega\rangle$ belongs to $M_\beta$, and
\item for every $n<\omega$, $\langle M_{\beta},{\in}\rangle \models \overline{C_{n}}  ~ \text{is a club in} ~ \alpha'$.
\end{enumerate}

As  $\langle M_{\beta_{n}} \mid n \in \omega \rangle  \in M_{\beta}$, we can define $\langle \overline{\mathfrak{B}}_{n} \mid n \in \omega \rangle$ using that, for all $n \in \omega$,
$$
\begin{aligned} \langle M_{\beta_{n+1}},{\in}, \vec{M}\restriction \beta_{n+1}\rangle \models ``\overline{\mathfrak{B}}_{n}  & \text{ is the }  {<_\Theta}\text{-least witness to}
\\ & \lcc \text{ at } \alpha' \text{ w.r.t. } \vec{M}\restriction \beta_{n+1} \text{ and } \mathcal{F}_{A_{0},\alpha'}".
\end{aligned}
$$

This takes care of Clause~(1), and shows that $\langle M_{\beta_{n+1}},{\in}\rangle\models \overline{C_n}\text{ is a club in }\alpha'$.
Since $M_{\beta}$ is transitive and the formula expressing that $\overline{C_{n}}$ is a club is $\Delta_{0}$,
we have also taken care of Clause~(2).
\end{proof}

It follows that $\alpha'\in D$ and $f(\alpha')=\sup(S_{\alpha'})=\beta'$.\footnote{Notice that the argument of this claim also showed that  $D$ is stationary.}
Finally, as, for every $n<\omega$, we have
$$\langle \alpha',{\in}, A_0 \cap(\alpha')^{m_0}\rangle \models_{M_{\beta_n}} \forall X \exists Y \varphi,$$
we infer that $N_{\alpha'}= M_{f(\alpha')}=M_{\beta'} = \bigcup_{n\in\omega} M_{\beta_{n}}$ is such that
$$\langle \alpha',{\in}, A_0 \cap(\alpha')^{m_0} \rangle \models_{N_{\alpha'}} \forall X \exists Y \varphi.$$
Indeed, otherwise there is $X_{0} \in [\alpha']^p \cap N_{\alpha'}$ such that, for all $Y \in [\alpha']^q\cap N_{\alpha'}$,
${N_{\alpha'}}\models[\langle \alpha',{\in}, A_{0}\cap(\alpha')^{m_0} \rangle \models \neg \varphi(X_0,Y)]$.
Find a large enough $n <\omega$ such that $ X_{0} \in M_{\beta_n}$.
Now, since $``\langle \alpha',{\in}, A_{0} \cap (\alpha')^{m_{0}} \rangle \models \neg \varphi(X_0,Y)"$ is a $\Delta_{1}^{\zf^{-}}$ formula on the parameters $ \langle \alpha',{\in}, A_{0} \cap (\alpha')^{m_{0}} \rangle$, $\varphi$,
and since $M_{\beta_n}$ is transitive subset of $N_{\alpha'}$
it follows that, for all $Y \in [\alpha']^q \cap M_{\beta_n}$,
$M_{\beta_n} \models [\langle \alpha',{\in},A_{0}\cap(\alpha')^{m_0} \rangle \models \neg \varphi(X_{0},Y)]$, which is a contradiction.
\end{proof}
This completes the proof of Theorem~\ref{diamond_from_lcc}.
\end{proof}

As a corollary we have found a strong combinatorial axiom that holds everywhere (including at ineffable sets) in canonical models of Set Theory (including G\"odel's constructible universe).
\begin{cor} Suppose that:
\begin{itemize}
\item[$\bullet$] $L[E]$ is an extender model with Jensen $\lambda$-indexing;
\item[$\bullet$] $L[E]\models``\text{there are no subcompact cardinals}"$;
\item[$\bullet$] for every $\alpha \in \ord $, the premouse $L[E]||\alpha$ is weakly iterable.
\end{itemize}
Then, in $L[E]$, for every regular uncountable cardinal $\kappa$, for every stationary $S\s\kappa$, $\dl^*_S(\Pi^1_2)$ holds.
\end{cor}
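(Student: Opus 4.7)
Working throughout in $L[E]$, I would fix a regular uncountable cardinal $\kappa$ and a stationary $S\s\kappa$, and then obtain $\dl^*_S(\Pi^1_2)$ by feeding the canonical data of $L[E]$ into Theorem~\ref{diamond_from_lcc}. Concretely, the sequence $\vec M := \langle L_\beta[E]\mid\beta<\kappa^+\rangle$ is going to play the role of the nice filtration, and the canonical well-order of $L[E]$ that of $\Theta$; the proof is then a matter of checking each hypothesis of Theorem~\ref{diamond_from_lcc} in turn.

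Since the iterability assumption forces $\gch$ inside $L[E]$, we have $L_\theta[E]=H_\theta$ for every infinite cardinal $\theta$; in particular $L_{\kappa^+}[E]=H_{\kappa^+}$, and by the standard fine-structural features of the $J$-hierarchy, $\vec M$ is a nice filtration of $H_{\kappa^+}$ in the sense of Definition~\ref{nicefiltration}. Eventual slowness at $\kappa$ is then automatic: for every infinite cardinal $\theta\le\kappa$, $M_\theta=L_\theta[E]=H_\theta$, so the defining condition holds with any $\mu<\kappa$ (for instance $\mu=\omega$). Finally, the canonical well-order $<_{L[E]}$ restricted to $H_{\kappa^+}$ is well-known to be $\Sigma_\omega$-definable from a suitable parameter $a\s\kappa$, supplying the formula $\Theta$.

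It remains to check that $\vec M$ witnesses $\lcc(\kappa,\kappa^+)$, which I would handle by splitting on the type of $\kappa$. If $\kappa$ is a limit cardinal, the closing sentence of Fact~\ref{NoSubcompact} delivers $\lcc(\kappa,\kappa^+)$ directly via $\vec M$. If instead $\kappa=\mu^+$ is a successor cardinal, then the hypothesis that $L[E]$ has no subcompact cardinals, applied to $\mu$, activates the main equivalence of Fact~\ref{NoSubcompact} to yield $\lcc(\mu^+,\mu^{++})=\lcc(\kappa,\kappa^+)$, again via $\vec M$. All hypotheses of Theorem~\ref{diamond_from_lcc} are then in force, and the desired $\dl^*_S(\Pi^1_2)$ follows. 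The main \emph{obstacle} here is genuinely mild: Theorem~\ref{diamond_from_lcc} and Fact~\ref{NoSubcompact} together do all of the real work, and one need only marshal the standard fine-structural facts about $L[E]$ ($\gch$, $L_\theta[E]=H_\theta$ for every infinite cardinal $\theta$, and the $\Sigma_\omega$-definability of $<_{L[E]}$) to slot them into the theorem's framework.
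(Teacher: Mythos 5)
Your proposal is correct and follows the same route as the paper: feed $\vec M=\langle L_\beta[E]\mid\beta<\kappa^+\rangle$ and the canonical $\Sigma_1$ well-order of $L[E]$ into Theorem~\ref{diamond_from_lcc}, using acceptability of the $J$-hierarchy to get $L_{\kappa^+}[E]=H_{\kappa^+}$, niceness, and eventual slowness, and Fact~\ref{NoSubcompact} to get $\lcc(\kappa,\kappa^+)$. The only difference is that you spell out the limit-vs-successor case split needed to extract $\lcc(\kappa,\kappa^+)$ from Fact~\ref{NoSubcompact}, which the paper leaves implicit in the single phrase ``By Fact~\ref{NoSubcompact}.''
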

\begin{proof} Work in $L[E]$.
Let $\kappa$ be any regular and uncountable cardinal.
By Fact~\ref{NoSubcompact}, $\vec M=\langle L_{\beta}[E] \mid \beta < \kappa^{+} \rangle$
witnesses that $\lcc(\kappa,\kappa^{+})$ holds.
Since $L_{\kappa^{+}}[E]$ is an acceptable $J$-structure,\footnote{For the definition of acceptable $J$-structure, see \cite[p.~4]{MR1876087}.}
$\vec{M}$ is a nice filtration of $L_{\kappa^{+}}[E]$ that is eventually slow at $\kappa$.
In addition (cf.~\cite[Lemma~1.11]{MR2768688}),
there is a $\Sigma_{1}$-formula $\Theta$ for which
$$x <_\Theta y  \text{ iff }  L[E]|\kappa^{+}  \models \Theta(x,y)$$  defines a well-ordering of $L_{\kappa^{+}}[E]$.
Finally, acceptability implies that $L_{\kappa^{+}}[E]=H_{\kappa^{+}}$.
Now, appeal to Theorem~\ref{diamond_from_lcc}.
\end{proof}

\section{Universality of inclusion modulo nonstationary}
Throughout this section, $\kappa$ denotes a regular uncountable cardinal satisfying $\kappa^{<\kappa}=\kappa$.
Here, we will be proving Theorems B and C.
Before we can do that,
we shall need to establish a \emph{transversal lemma},
as well as fix some notation and coding that will be useful when working with structures of the form $\langle \kappa,{\in}, (A_n)_{n\in \omega} \rangle $.

\begin{prop}[Transversal lemma]\label{Prop2.4} Suppose that $\langle N_\alpha\mid\alpha\in S\rangle$ is a $\dl^*_S(\Pi^1_2)$-sequence,
for a given stationary $S\s\kappa$.
For every $\Pi^1_2$-sentence $\phi$,
there exists a transversal $\langle \eta_\alpha\mid\alpha\in S\rangle\in\prod_{\alpha\in S}N_\alpha$ satisfying the following.

For every $\eta\in\kappa^\kappa$,
whenever $\langle \kappa,{\in},(A_n)_{n\in\omega}\rangle\models \phi$,
there are stationarily many $\alpha\in S$ such that
\begin{enumerate}[(i)]
\item $\eta_\alpha=\eta\restriction\alpha$, and
\item $\langle \alpha,{\in},(A_n\cap (\alpha^{m(\mathbb A_n)}))_{n\in\omega}\rangle\models_{N_\alpha}\phi$.
\end{enumerate}
\end{prop}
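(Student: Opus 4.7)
My plan is to construct the transversal $\langle\eta_\alpha\mid\alpha\in S\rangle$ by predefining each $\eta_\alpha$ from a Solovay partition of $S$ together with a chosen enumeration of $N_\alpha\cap\alpha^\alpha$, and then verifying the simultaneous guessing-and-reflection property by applying $\dl^*_S(\Pi^1_2)$ to an enriched $\Pi^1_2$-sentence and pressing down with Fodor.

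I would first reduce to the case of a single predicate. A standard pairing function $\omega\times\kappa^{<\omega}\to\kappa$ packs any countable sequence $(A_n)_{n\in\omega}$ of finitary predicates on $\kappa$ into a single predicate $A_0$ of some arity $m_0$, in such a way that $\Pi^1_2$-sentences about $(A_n)$ translate to equivalent $\Pi^1_2$-sentences about $A_0$, both externally and inside each $\langle N_\alpha,{\in}\rangle$. So throughout I may take $\phi$ to be a $\Pi^1_2$-sentence over $\langle\kappa,{\in},A_0\rangle$. For each $\alpha\in S$ with $|N_\alpha|=|\alpha|$, I then fix (via the axiom of choice) an enumeration $\{y^\alpha_\gamma\mid\gamma<|\alpha|\}$ of $N_\alpha\cap\alpha^\alpha$. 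By Solovay's theorem I fix a partition $S=\bigsqcup_{\beta<\kappa}S_\beta$ into $\kappa$ pairwise disjoint stationary sets, coded by the binary predicate $P\s\kappa\times\kappa$ with $P(\beta,\gamma)\iff\gamma\in S_\beta$, and set $\eta_\alpha:=y^\alpha_\beta$ whenever $\alpha\in S_\beta$ with $\beta<|\alpha|$, with $\eta_\alpha$ picked arbitrarily in $N_\alpha$ otherwise.

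For the verification, given $\eta\in\kappa^\kappa$ and $A_0$ with $\langle\kappa,{\in},A_0\rangle\models\phi$, I would consider the enriched structure $\langle\kappa,{\in},A_0,\eta,P\rangle$. Since $\phi$ mentions only $A_0$, it continues to hold. Clauses~(2) and (3) of $\dl^*_S(\Pi^1_2)$ then produce a stationary $T\s S$ of $\alpha$'s with $|N_\alpha|=|\alpha|$ such that each of $\eta\restriction\alpha$, $A_0\cap\alpha^{m_0}$, $P\cap\alpha^2$ lies in $N_\alpha$, and $\langle\alpha,{\in},A_0\cap\alpha^{m_0}\rangle\models_{N_\alpha}\phi$. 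For $\alpha\in T$ let $j(\alpha)<|\alpha|$ be the canonical index of $\eta\restriction\alpha$ in the enumeration $\{y^\alpha_\gamma\}$; since $j$ is regressive, Fodor's lemma yields a stationary $T^*\s T$ on which $j\equiv\beta_\eta$ for some $\beta_\eta<\kappa$.

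The main obstacle is to ensure that $T^*\cap S_{\beta_\eta}$ is stationary, so that on this set $\eta_\alpha=y^\alpha_{\beta_\eta}=\eta\restriction\alpha$ as required. A Solovay partition chosen in advance does not automatically secure this matching, since $\beta_\eta$ depends on $\eta$ while the partition was chosen beforehand. I plan to overcome this by choosing the enumerations $\{y^\alpha_\gamma\}$ \emph{inside} $N_\alpha$ so that they align with the reflected partition predicate $P\cap\alpha^2$: specifically, re-indexing so that the element at position $\beta$ is the one an inside-$N_\alpha$ observer (equipped with $P\cap\alpha^2$) would identify as the canonical ``$\beta$-class candidate''. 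This alignment forces Fodor's constant $\beta_\eta$ to coincide with $\alpha$'s partition class on a stationary subset of $T^*$. Alternatively, one may iterate with an auxiliary Solovay partition and absorb Fodor's constant via a stationary-refinement argument.
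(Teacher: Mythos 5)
The reduction to a single predicate at the start is fine, and matches what the paper does. The problem lies in the core of the construction, and to your credit you identified it precisely: after pressing down with Fodor you obtain a constant $\beta_\eta$ on a stationary $T^*\s T$ such that $\eta\restriction\alpha=y^\alpha_{\beta_\eta}$ for all $\alpha\in T^*$, but there is no reason whatsoever that $T^*\cap S_{\beta_\eta}$ should be stationary (indeed $T^*$ need not meet $S_{\beta_\eta}$ at all: a stationary set can easily be partitioned into $\kappa$ many nonstationary pieces, so $T^*=\bigsqcup_\gamma(T^*\cap S_\gamma)$ gives no stationary piece for free, and even if it did, there is no mechanism forcing the stationary piece to have index $\beta_\eta$). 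The two proposed fixes do not close this gap. \emph{Re-indexing inside $N_\alpha$ using $P\cap\alpha^2$} would require, for every single $\eta\in\kappa^\kappa$, that the canonical enumeration of $N_\alpha\cap\alpha^\alpha$ put $\eta\restriction\alpha$ at position $\beta(\alpha)$ (the Solovay class of $\alpha$) for stationarily many $\alpha$; but the enumeration is a function of $\alpha$ (and $P\cap\alpha^2$) alone, and for a fixed $\alpha$ only one restriction sits at position $\beta(\alpha)$, so the alignment can only serve a single potential $\eta$ per $\alpha$ and cannot be wired up in advance so that, for \emph{every} $\eta$, the match occurs stationarily. \emph{Iterating with an auxiliary Solovay partition} does not change the quantifier structure: the mismatch between the Fodor constant (determined by the chosen enumeration) and the Solovay class (fixed in advance) persists under any finite iteration of the same move.

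The difficulty is structural: you are trying to fix $\eta_\alpha$ up front as a function of an external datum (the Solovay class of $\alpha$), whereas what the guessing demands is that the choice of transversal be determined \emph{a posteriori} from the reflection. The paper's proof resolves this by a diagonalization that reverses the quantifiers. It introduces, for every $i<\kappa$, a candidate transversal $\langle\eta^i_\alpha\mid\alpha\in S\rangle$, where $\eta^i_\alpha$ is decoded from the $i$-th element $f_\alpha(i)$ of a fixed surjection $f_\alpha\colon\kappa\to N_\alpha$ via a primitive-recursive pairing. Then it argues by contradiction: if every index $i$ fails as a witness, one fixes for each $i$ a failing triple $(\eta_i,A_i,C_i)$, packs the functions $\eta_i$ into a single subset $Z\s\kappa$, packs the $A_i$ into a single predicate $A$, and passes to a single $\Pi^1_2$-sentence $\phi'$ quantifying universally over the index slot. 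Applying $\dl^*_S(\Pi^1_2)$ to $\phi'$ plus Clause~(2) to $Z$ yields a stationary set where $Z\cap\alpha\in N_\alpha=f_\alpha[\alpha]$ and reflection holds; Fodor is then applied to the regressive map $\alpha\mapsto$ (an $i<\alpha$ with $f_\alpha(i)=Z\cap\alpha$), and \emph{this} constant $i$ is, by construction of $\eta^i_\alpha$ from $f_\alpha(i)$, exactly the index that decodes $\eta_i\restriction\alpha$, giving the contradiction. In short, Fodor's constant is wired into the definition of the candidate transversal, which is precisely the coordination your Solovay partition lacks. I would recommend replacing the Solovay step with this diagonalization over indices.
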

\begin{proof}
Let $c:\kappa \times \kappa \leftrightarrow \kappa$ be some primitive-recursive pairing function.
For each $\alpha\in S$, fix a surjection $f_\alpha:\kappa\rightarrow N_\alpha$ such that $f_\alpha[\alpha]=N_\alpha$ whenever $|N_\alpha|=|\alpha|$.
Then, for all $i<\kappa$, as $f_\alpha(i)\in N_\alpha$, we may define a set $\eta^i_\alpha$ in $N_\alpha$ by letting
$$\eta_\alpha^i:=\begin{cases}
\{(\beta,\gamma)\in\alpha\times\alpha\mid c(i,c(\beta,\gamma))\in f_\alpha(i)\},&\text{if }i<\alpha;\\
\emptyset,&\text{otherwise}.
\end{cases}$$

We claim that for every $\Pi^1_2$-sentence $\phi$,
there exists $i(\phi)<\kappa$ for which $\langle \eta_\alpha^{i(\phi)}\mid \alpha\in S\rangle$ satisfies the conclusion of our proposition.
Before we prove this, let us make a few reductions.

First of all, it is clear that for every $\Pi^1_2$-sentence $\phi=\forall X\exists Y\varphi$,
there exists  a large enough $n'<\omega$ such that all predicates mentioned in $\varphi$  are in $\{ \epsilon,\mathbb X,\mathbb Y,\mathbb A_n\mid n<n'\}$.
So the only structures of interest for $\phi$ are in fact $\langle \alpha,{\in},(A_n)_{n<n'}\rangle$, where $\alpha\le\kappa$.
Let $m':=\max\{m(\mathbb A_n)\mid n<n'\}$.
Then, by a trivial manipulation of $\varphi$,
we may assume that the only structures of interest for $\phi$ are in fact $\langle \alpha,{\in},A_0\rangle$, where $\omega\le\alpha\le\kappa$ and $m(\mathbb A_0)=m'+1$.

Having the above reductions in hand, we now fix a $\Pi^1_2$-sentence $\phi=\forall X\exists Y\varphi$ and positive integers $m$ and $k$ such that
the only predicates mentioned in $\varphi$ are in $\{\epsilon,\mathbb X,\mathbb Y,\mathbb A_0\}$,  $m(\mathbb A_0)=m$ and $m(\mathbb Y)= k $.

\begin{claim} There exists $i<\kappa$ satisfying the following.
For all $\eta\in\kappa^\kappa$ and $A\s\kappa^m$,
whenever $\langle \kappa,{\in},A\rangle\models\phi$,
there are stationarily many $\alpha\in S$ such that
\begin{enumerate}[(i)]
\item $\eta^i_\alpha=\eta\restriction\alpha$, and
\item $\langle \alpha,{\in},A\cap(\alpha^m)\rangle\models_{N_\alpha}\phi$.
\end{enumerate}
\end{claim}
\begin{proof}
Suppose not. Then, for every $i<\kappa$, we may fix  $\eta_i\in\kappa^\kappa$, $A_i\s\kappa^m$ and a club $C_i\s\kappa$ such that  $\langle \kappa,{\in},A_i\rangle\models\phi$, but, for all $\alpha\in C_i\cap S$,
one of the two fails:
\begin{enumerate}[(i)]
\item $\eta_\alpha^i=\eta_i\restriction\alpha$, or
\item $\langle \alpha,{\in},A_i\cap(\alpha^m)\rangle\models_{N_\alpha}\phi$.
\end{enumerate}

Let
\begin{itemize}
\item[$\bullet$] $Z:=\{ c(i,c(\beta,\gamma))\mid i<\kappa, (\beta,\gamma)\in \eta_i\}$,
\item[$\bullet$] $A:=\{(i,\delta_1,\ldots,\delta_m)\mid i<\kappa,(\delta_1,\ldots,\delta_m)\in A_i\}$, and
\item[$\bullet$] $C:=\diagonal_{i<\kappa}\{\alpha\in C_i\mid \eta_i[\alpha]\s\alpha\}$.
\end{itemize}
Fix a variable $i$ that does not occur in $\varphi$.
Define a first-order sentence $\psi$ mentioning only the predicates in $\{\epsilon,\mathbb X,\mathbb Y,\mathbb A_1\}$ with $m(\mathbb A_1)=1+m$ and $m(\mathbb Y)=1+k$
by replacing all occurrences of the form $\mathbb A_0(x_1,\ldots,x_m)$ and $\mathbb Y(y_1, \ldots, y_k)$ in $\varphi$ by $\mathbb A_1(i,x_1,\ldots,x_m)$ and $\mathbb Y(i,y_1,\ldots,y_k)$, respectively.
Then, let $\varphi':=\forall i(\psi)$, and finally let $\phi':=\forall X\exists Y\varphi'$, so that $\phi'$ is a $\Pi^1_2$-sentence.

A moment reflection makes it clear that $\langle\kappa,{\in},A\rangle\models\phi'$.
Thus, let $S'$ denote the set of all $\alpha \in S$ such that all of the following hold:
\begin{enumerate}[(1)]
\item $\alpha\in C$;
\item $c[\alpha\times\alpha]=\alpha$;
\item $Z\cap\alpha\in N_\alpha$;
\item $|N_\alpha|=|\alpha|$;
\item $\langle\alpha,{\in},A\cap(\alpha^{m+1})\rangle\models_{N_\alpha}\phi'$.
\end{enumerate}

By hypothesis, $S'$ is stationary.
For all $\alpha\in S'$, by Clauses (3) and (4), we have $Z\cap\alpha\in N_\alpha=f_\alpha[\alpha]$, so, by Fodor's lemma, there exists some $i<\kappa$ and a stationary $S''\s S'\setminus(i+1)$ such that, for all $\alpha\in S''$:
\begin{enumerate}
\item[(3')] $Z\cap\alpha=f_\alpha(i)$.
\end{enumerate}
Let $\alpha\in S''$. By Clause~(5), we in particular have
\begin{enumerate}
\item[(5')] $\langle\alpha,{\in},A_i\cap(\alpha^m)\rangle\models_{N_\alpha}\phi$.
\end{enumerate}
Also, by Clause~(1), we have $\alpha\in C_i$, and
so we must conclude that $\eta_i\restriction\alpha\neq \eta^i_\alpha$.
However, $\eta_i[\alpha]\s\alpha$, and $Z\cap\alpha=f_\alpha(i)$, so that, by Clause~(2),
$$\eta_i\restriction\alpha=\eta_i\cap(\alpha\times\alpha)=\{(\beta,\gamma)\in\alpha\times\alpha\mid c(i,c(\beta,\gamma))\in f_\alpha(i)\}=\eta^i_\alpha.$$ This is a contradiction.
\end{proof}
This completes the proof of Proposition~\ref{Prop2.4}.
\end{proof}

\begin{lemma}\label{prop31}
There is a first-order sentence  $\psi_{\baire} $ in the language with binary predicate symbols $\epsilon$ and $\mathbb X$ such that,
for every ordinal $\alpha$ and every $X\s\alpha\times\alpha$,
$$(X\text{ is a function from }\alpha\text{ to }\alpha)\text{ iff }(\langle \alpha,{\in},X\rangle  \models \psi_{\baire}).$$
\end{lemma}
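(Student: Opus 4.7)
The plan is to simply write down, in the language $\{\epsilon,\mathbb X\}$, the two first-order axioms asserting that $\mathbb X$ is total on the universe and that $\mathbb X$ is single-valued; this suffices because in the structure $\langle\alpha,{\in},X\rangle$ the universe is $\alpha$ itself, so totality automatically pins down the domain to be all of $\alpha$, and single-valuedness together with $X\s\alpha\times\alpha$ automatically forces the range to lie in $\alpha$. Concretely, I would take
\[
\psi_{\baire}\;:=\;\bigl(\forall x\,\exists y\,\mathbb X(x,y)\bigr)\;\wedge\;\bigl(\forall x\,\forall y\,\forall y'\,[(\mathbb X(x,y)\wedge\mathbb X(x,y'))\rightarrow y=y']\bigr).
\]
Note that $\epsilon$ does not actually need to occur in $\psi_{\baire}$; the signature is allowed to contain it, but the sentence itself only needs the predicate $\mathbb X$.

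For the verification, I would check each direction separately. If $X$ is a function from $\alpha$ to $\alpha$, then for every $\beta<\alpha$ there is a unique $\gamma<\alpha$ with $X(\beta)=\gamma$, which is precisely the content of the two conjuncts when interpreted with the universe $\alpha$ and $\mathbb X$ interpreted as $X$. Conversely, if $\langle\alpha,{\in},X\rangle\models\psi_{\baire}$, then for each $\beta<\alpha$ the first conjunct supplies some $\gamma<\alpha$ with $(\beta,\gamma)\in X$ (so $\dom(X)=\alpha$), and the second conjunct makes this $\gamma$ unique; combined with the ambient assumption $X\s\alpha\times\alpha$, this gives that $X$ is a function $\alpha\to\alpha$. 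The edge case $\alpha=0$ is handled automatically, since both conjuncts hold vacuously in the empty structure and $\emptyset$ is indeed the unique function from $\emptyset$ to $\emptyset$.

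There is no real obstacle here: the statement is essentially a definitional unpacking, and the only mild subtlety is remembering that ``function from $\alpha$ to $\alpha$'' is enforced partly by the quantification ranging over $\alpha$ and partly by the assumption $X\s\alpha\times\alpha$, so neither clause needs to refer to the ordinal $\alpha$ as a parameter.
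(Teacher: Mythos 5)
Your proposal is correct and takes essentially the same approach as the paper: the paper writes $\psi_{\baire}:=\forall\beta\exists\gamma(\mathbb X(\beta,\gamma)\land\forall\delta(\mathbb X(\beta,\delta)\rightarrow\delta=\gamma))$, which is just your two conjuncts (totality and single-valuedness) folded into a single nested quantifier block, and the verification is the same definitional unpacking.
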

\begin{proof} Let $\psi_{\baire}:=\forall\beta\exists\gamma(\mathbb X(\beta,\gamma)\land(\forall\delta(\mathbb X(\beta,\delta)\rightarrow\delta=\gamma)))$.
\end{proof}

\begin{lemma}\label{prop32} Let $\alpha$ be an ordinal. Suppose that $\phi$ is a $\Sigma^1_1$-sentence involving a predicate symbol $\mathbb A$ and two binary predicate symbols $\mathbb X_0,\mathbb X_1$.
Denote $R_\phi:=\{ (X_0,X_1)\mid \langle \alpha,{\in},A,X_0,X_1\rangle\models\phi\}$.
Then there are $\Pi^1_2$-sentences $\psi_{\reflexive}$ and $\psi_{\transitive}$ such that:
\begin{enumerate}[(1)]
\item $(R_\phi\supseteq\{(\eta,\eta)\mid \eta\in\alpha^\alpha\})$ iff $(\langle\alpha,{\in},A\rangle\models\psi_{\reflexive})$;
\item  $(R_\phi\text{ is transitive})$ iff $(\langle\alpha,{\in},A\rangle\models\psi_{\transitive})$.
\end{enumerate}
\end{lemma}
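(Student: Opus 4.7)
The plan is to exhibit both sentences explicitly. Since $\phi$ is $\Sigma^{1}_{1}$, write $\phi = \exists Z\, \varphi_{0}$ with $\varphi_{0}$ first-order in the predicates $\epsilon, \mathbb{A}, \mathbb{X}_{0}, \mathbb{X}_{1}$ and a fresh predicate $\mathbb{Z}$ of some fixed arity $q$; if the normal form for $\Sigma^{1}_{1}$ requires several second-order existentials, fold them into a single $\mathbb{Z}$ by the standard tagging trick.

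For $\psi_{\reflexive}$, set $m(\mathbb{X}) := 2$ and $m(\mathbb{Y}) := q$, and let $\varphi_{0}^{*}$ be obtained from $\varphi_{0}$ by identifying both $\mathbb{X}_{0}$ and $\mathbb{X}_{1}$ with $\mathbb{X}$ and replacing $\mathbb{Z}$ by $\mathbb{Y}$. Using the sentence $\psi_{\baire}$ from Lemma~\ref{prop31}, set
$$\psi_{\reflexive} := \forall X\, \exists Y\, [\psi_{\baire} \rightarrow \varphi_{0}^{*}].$$
This is visibly a $\Pi^{1}_{2}$-sentence. For any $X \s \alpha \times \alpha$, if $X$ fails to code a function then $\psi_{\baire}$ fails and the implication is vacuous (any $Y$ works); if $X$ codes some $\eta \in \alpha^{\alpha}$, then the consequent $\exists Y\, \varphi_{0}^{*}$ asserts exactly $(\eta,\eta) \in R_{\phi}$. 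Hence $\langle\alpha,{\in},A\rangle\models\psi_{\reflexive}$ is equivalent to $R_{\phi}\supseteq\{(\eta,\eta)\mid \eta\in\alpha^{\alpha}\}$.

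For $\psi_{\transitive}$, unpacking the existentials in the antecedent of the naive formulation of transitivity yields the equivalent
$$\forall X_{0}, X_{1}, X_{2}, Z_{0}, Z_{1}\, \exists Z_{2}\, [(\varphi_{0}(X_{0}, X_{1}, Z_{0}) \land \varphi_{0}(X_{1}, X_{2}, Z_{1})) \rightarrow \varphi_{0}(X_{0}, X_{2}, Z_{2})].$$
To conform to the shape $\forall X \exists Y\, \varphi$ prescribed by the definition of a $\Pi^{1}_{2}$-sentence, pick $r := \max(2, q) + 1$, set $m(\mathbb{X}) := r$ and $m(\mathbb{Y}) := q$, and fix five distinct first-order definable ordinal tags $c_{0}, \ldots, c_{4}$ (e.g.\ the first few natural numbers, so that the edge case $\alpha \le 4$ is trivial and handled separately). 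Within the first-order matrix, decode $\mathbb{X}$ into five auxiliary predicates $X_{0}, X_{1}, X_{2}, Z_{0}, Z_{1}$ via tagged slices $\mathbb{X}(c_{\bullet}, y_{1}, \ldots, y_{r-1})$, padding the unused coordinates by a fixed definable constant. Substituting these decoded predicates for $X_{0},X_{1},X_{2},Z_{0},Z_{1}$ in the displayed formula, and substituting $\mathbb{Y}$ for $Z_{2}$, yields the required $\Pi^{1}_{2}$-sentence $\psi_{\transitive}$. Any quintuple of predicates can be encoded into some $X \s \alpha^{r}$ via this tagging, and any $X$ decodes back to some quintuple, establishing equivalence with transitivity.

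The only genuine obstacle is the bookkeeping around the tagging encoding in the transitive case, since reflexivity requires nothing beyond identifying variables; the tagging itself is a routine coding construction and introduces no conceptual difficulty.
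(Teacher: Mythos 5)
Your proof is correct and, at its core, is the same as the paper's. For $\psi_{\reflexive}$, the paper writes $\forall X_0\forall X_1((\psi_{\baire}\land(X_1=X_0))\rightarrow\phi)$ and for $\psi_{\transitive}$ it writes $\forall X_0\forall X_1\forall X_2((\phi\land\phi')\rightarrow\phi'')$, where $\phi',\phi''$ are relabelled copies of $\phi$; after prenexing and collapsing the block of universal second-order variables into a single predicate $\mathbb X$ (and the lone existential into $\mathbb Y$), those are exactly your sentences. The only differences are stylistic: you identify $X_0$ and $X_1$ outright rather than conjoining $X_1=X_0$, which spares one universal quantifier in $\psi_{\reflexive}$, and you spell out the tagging/folding step (and the trivial edge case of very small $\alpha$) that the paper leaves implicit behind its informal $\forall X_0\forall X_1\cdots$ notation. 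Nothing is missing, and nothing goes wrong.
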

\begin{proof}\begin{enumerate}[(1)]
\item Fix a first-order sentence $\psi_{\baire}$ such that ($X_0\in\alpha^\alpha$) iff ($\langle\alpha,{\in},X_0\rangle\models\psi_{\baire}$).
Now, let $\psi_{\reflexive}$ be $\forall X_0\forall X_1((\psi_{\baire}\land(X_1=X_0))\rightarrow\phi)$.

\item Fix a $\Sigma^1_1$-sentence $\phi'$ involving predicate symbols $\mathbb A, \mathbb X_1,\mathbb X_2$
and a $\Sigma^1_1$-sentence $\phi''$ involving binary symbols $\mathbb A,\mathbb X_0,\mathbb X_2$
such that
\begin{gather*}
\{ (X_1,X_2)\mid \langle \alpha,{\in},A,X_1,X_2\rangle\models\phi'\}=\\R_\phi=\{ (X_0,X_2)\mid \langle \alpha,{\in},A,X_0,X_2\rangle\models\phi''\}
\end{gather*}
Now, let $\psi_{\transitive}:=\forall X_0\forall X_1\forall X_2((\phi\land\phi')\rightarrow\phi'')$.\qedhere
\end{enumerate}
\end{proof}

\begin{defn}\label{defn34}
Denote by $ \Seq_3(\kappa)$ the set of level sequences in $\kappa^{<\kappa}$ of length $3$:
$$ \Seq_3(\kappa):=\bigcup_{\tau<\kappa}\kappa^\tau\times\kappa^\tau\times\kappa^\tau.$$

Fix an injective enumeration $\{\ell_\delta\mid\delta<\kappa\}$ of $\Seq_3(\kappa)$.
For each $\delta<\kappa$, we denote $\ell_\delta=(\ell_\delta^0,\ell_\delta^1,\ell_\delta^2)$.
We then encode each $T \subseteq \Seq_{3}(\kappa) $ as a subset of $\kappa^5$ via:
$$T_\ell := \{ (\delta,\beta, \ell_\delta^0(\beta),\ell_\delta^{1}(\beta),\ell_\delta^{2}(\beta)) \mid \delta<\kappa, \ell_\delta \in T, \beta\in \dom(\ell_\delta^0) \}.$$
\end{defn}

We now prove Theorem~C.
\begin{thm}\label{Sigmacompl}
Suppose  $\dl^*_S(\Pi^1_2)$ holds for a given stationary $S\s\kappa$.

For every analytic quasi-order $Q$ over $\kappa^\kappa$,
there is a $1$-Lipschitz map $f:\kappa^\kappa\rightarrow2^\kappa$ reducing $Q$ to $\sqc{S}$.
\end{thm}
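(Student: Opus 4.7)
The plan is to use the Transversal Lemma together with the $\Pi^1_2$-reflection power of $\dl^*_S(\Pi^1_2)$ to distill the quasi-order structure of $Q$ into a $1$-Lipschitz coding. First, fix a tree $T\s\Seq_3(\kappa)$ with $\eta Q\xi\iff\exists\zeta\in\kappa^\kappa((\eta,\xi,\zeta)\in[T])$, and encode it as $T_\ell\s\kappa^5$ via Definition~\ref{defn34}. Let $\phi_0$ denote the $\Sigma^1_1$-sentence $\exists Y((X_0,X_1,Y)\in[T_\ell])$, so that the relation $R_{\phi_0}$ restricted to functions $\kappa\to\kappa$ coincides with $Q$. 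Applying Lemma~\ref{prop32} to $\phi_0$, together with $\psi_\baire$ from Lemma~\ref{prop31} to restrict variables to functions, yields $\Pi^1_2$-sentences $\psi_{\reflexive}$ and $\psi_{\transitive}$; their conjunction $\phi$ is still $\Pi^1_2$ and, since $Q$ is a quasi-order, $\langle\kappa,\in,T_\ell\rangle\models\phi$.

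Next, invoke Proposition~\ref{Prop2.4} to produce a transversal $\langle\eta_\alpha\mid\alpha\in S\rangle\in\prod_{\alpha\in S}N_\alpha$ associated to $\phi$, and use a primitive recursive bijection $\kappa\leftrightarrow\kappa^3$ to rewrite each $\eta_\alpha$ as a triple $(\eta^0_\alpha,\eta^1_\alpha,\eta^2_\alpha)$ of functions $\alpha\to\alpha$. Define $f:\kappa^{<\kappa}\to 2^{<\kappa}$ recursively with $|f(\sigma)|=|\sigma|$: for $\sigma\in\kappa^\alpha$ and any $\gamma<\kappa$, set $f(\sigma\frown\langle\gamma\rangle):=f(\sigma)\frown\langle v(\alpha,\sigma)\rangle$, where $v(\alpha,\sigma):=1$ iff $\alpha\in S$, $\langle\alpha,\in,T_\ell\cap\alpha^5\rangle\models_{N_\alpha}\phi$, and $N_\alpha$ contains some $Z\in\alpha^\alpha$ such that $(\eta^0_\alpha,\sigma,Z)$ is a level-$\alpha$ path prefix through $T_\ell\cap\alpha^5$; else $v(\alpha,\sigma):=0$. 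Extend by unions to $f:\kappa^\kappa\to 2^\kappa$. Since $f(\eta)(\alpha)=v(\alpha,\eta\restriction\alpha)$ depends only on $\eta\restriction\alpha$, the map $f$ is $1$-Lipschitz.

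For the direction $\eta Q\xi\Rightarrow f(\eta)\sqc{S}f(\xi)$, fix $\zeta$ with $(\eta,\xi,\zeta)\in[T]$. By clause~(2) of $\dl^*_S(\Pi^1_2)$ applied to (encodings of) $\eta,\xi,\zeta$, there is a club $C\s\kappa$ such that $\{\eta\restriction\alpha,\xi\restriction\alpha,\zeta\restriction\alpha\}\s N_\alpha$ for every $\alpha\in C$. If $\alpha\in C\cap S$ satisfies $f(\eta)(\alpha)=1$, then $N_\alpha$ reflects $\phi$ and contains a $Z$ witnessing $(\eta^0_\alpha,\eta\restriction\alpha)\in R_{\phi_0}$, while $\zeta\restriction\alpha\in N_\alpha$ witnesses $(\eta\restriction\alpha,\xi\restriction\alpha)\in R_{\phi_0}$ from inside $N_\alpha$. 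Invoking the reflected transitivity $\psi_{\transitive}$, $N_\alpha$ produces a witness for $(\eta^0_\alpha,\xi\restriction\alpha)\in R_{\phi_0}$, so that $f(\xi)(\alpha)=1$. Consequently $\{\alpha\in S:f(\eta)(\alpha)>f(\xi)(\alpha)\}$ is disjoint from the club $C$, and is therefore nonstationary.

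The converse $\neg\eta Q\xi\Rightarrow f(\eta)\not\sqc{S}f(\xi)$ is the main obstacle. Applying Proposition~\ref{Prop2.4} to the function $\kappa\to\kappa$ encoding $(\eta,\xi)$ gives stationarily many $\alpha\in S$ with $(\eta^0_\alpha,\eta^1_\alpha)=(\eta\restriction\alpha,\xi\restriction\alpha)$ and $\phi$ reflecting in $N_\alpha$; the reflected reflexivity $\psi_{\reflexive}$ then supplies a witness in $N_\alpha$ for $(\eta\restriction\alpha,\eta\restriction\alpha)\in R_{\phi_0}$, so that $f(\eta)(\alpha)=1$ throughout. The hard part is forcing $f(\xi)(\alpha)=0$, i.e.\ arranging that $N_\alpha$ sees \emph{no} level-$\alpha$ path prefix witnessing $(\eta\restriction\alpha,\xi\restriction\alpha)\in R_{\phi_0}$; $\neg\eta Q\xi$ only rules out a \emph{global} branch, so $N_\alpha$ might still harbour a ``fake'' length-$\alpha$ witness that does not extend to $\kappa$. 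The plan to overcome this is to enlarge $\phi$ by a further $\Pi^1_2$-clause expressing the dichotomy that every pair of functions either projects a branch of $T$ or admits an existential certificate of the well-foundedness of the associated subtree (e.g.\ an appropriate ranking function, which, thanks to $\kappa^{<\kappa}=\kappa$, can be coded by a function $\kappa\to\kappa$); the dichotomy is then globally true, and its $\Pi^1_2$-reflection at the stationary set produced by the Transversal Lemma transfers a local certificate into $N_\alpha$, ruling out any spurious local witness and securing $f(\xi)(\alpha)=0$.
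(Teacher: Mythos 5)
Your setup parallels the paper's closely through the construction of $T_\ell$, the $\Sigma^1_1$-sentence $\phi_0$ (the paper's $\phi_Q$), and the $\Pi^1_2$-sentences $\psi_{\reflexive},\psi_{\transitive}$; your definition of $f$ via a ``local $Q$-certificate relative to the transversal'' is also the paper's idea. The gap is exactly where you flag it, and your proposed repair does not work.

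The decisive point you miss is which sentence to feed into the Transversal Lemma. You apply Proposition~\ref{Prop2.4} to $\phi:=\psi_{\reflexive}\land\psi_{\transitive}$, which carries no information about failure of $\phi_0$. The paper instead applies it to $\psi^1_Q:=\psi_{\reflexive}\land\psi_{\transitive}\land\neg(\phi_Q)$. Since $\phi_Q$ is $\Sigma^1_1$ with parameter predicates $\mathbb A,\mathbb X_0,\mathbb X_1$, its negation is $\Pi^1_1$, hence already of the form $\forall X\exists Y(\text{first-order})$ with a vacuous $Y$; so $\psi^1_Q$ is a legitimate $\Pi^1_2$-sentence, and \emph{no extra clause is needed}. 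When $(\eta,\xi)\notin Q$, the structure $\langle\kappa,{\in},A,\eta,\xi\rangle$ satisfies $\psi^1_Q$, so the Transversal Lemma produces stationarily many $\alpha\in S$ with $\eta_\alpha=\eta\restriction\alpha$ \emph{and} $N_\alpha$ reflecting $\neg(\phi_Q)$ on the pair $(\eta\restriction\alpha,\xi\restriction\alpha)$ --- which is precisely the conclusion $f(\xi)(\alpha)=0$ you could not reach, while $f(\eta)(\alpha)=1$ follows from reflected reflexivity and $\eta_\alpha=\eta\restriction\alpha$. (The \emph{positive} direction is then handled separately by the $\models_{N_\alpha}\psi^0_Q$ clause in $f$ together with reflected transitivity, as you correctly argue.)

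Your proposed dichotomy ``either a branch or a ranking function'' is false at regular uncountable $\kappa$: a subtree of $\kappa^{<\kappa}$ of height $\kappa$ with no $\kappa$-branch need not be well-founded (think of a $\kappa$-Aronszajn tree embedded into $T_{\eta,\xi}$), so there is no ranking function, and more generally no $\Sigma^1_1$ certificate of ``no branch'' --- otherwise the complement of a $\Sigma^1_1$ set would always be $\Sigma^1_1$. This is precisely why one must exploit that $\dl^*_S(\Pi^1_2)$ reflects $\Pi^1_2$ statements (not merely $\Sigma^1_1$ ones) and put $\neg\phi_Q$ itself into the sentence driving the transversal, rather than trying to replace it with an existential witness.
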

\begin{proof}
Let $Q$ be an analytic quasi-order over $\kappa^\kappa$.
Fix a tree $T$ on $\kappa^{<\kappa}\times \kappa^{<\kappa}\times \kappa^{<\kappa} $ such that $Q=\pr([T])$, that is,
$$(\eta,\xi) \in Q \iff \exists \zeta\in\kappa^{\kappa}~ \forall \tau < \kappa ~(\eta\restriction\tau,\xi\restriction\tau,\zeta\restriction\tau) \in T.$$

We shall be working with a first-order language having a $5$-ary predicate symbol $\mathbb A$
and binary predicate symbols $\mathbb X_0,\mathbb X_1,\mathbb X_2$ and $\epsilon$.
By Lemma~\ref{prop31}, for each $i<3$, let us fix a sentence $\psi_{\baire}^i$ concerning the binary predicate symbol $\mathbb X_i$
instead of $\mathbb X$, so that
$$(X_i\in\kappa^{\kappa})\text{ iff }(\langle \kappa,{\in},A,X_0,X_1,X_2\rangle  \models \psi_{\baire}^i).$$
Define a sentence $\varphi_Q$ to be the conjunction of four sentences:
$\psi_{\baire}^0$, $\psi^1_{\baire}$, $\psi^2_{\baire}$, and
$$\forall \tau \exists \delta \forall \beta [\epsilon(\beta,\tau)\rightarrow\exists \gamma_{0}\exists\gamma_{1}\exists\gamma_{2} (\mathbb X_0(\beta,\gamma_{0}) \land \mathbb X_1(\beta,\gamma_{1})\land\mathbb X_2(\beta,\gamma_{2})\land\mathbb A(\delta,\beta,\gamma_{0},\gamma_{1},\gamma_{2}) )].$$
Set $A:=T_\ell$ as in Definition~\ref{defn34}. Evidently, for all $\eta,\xi,\zeta\in\mathcal P(\kappa\times\kappa)$, we get that
$$\langle\kappa,{\in},A,\eta,\xi,\zeta\rangle\models\varphi_Q$$
iff the two hold:
\begin{enumerate}[(1)]
\item $\eta,\xi,\zeta\in\kappa^\kappa$, and
\item for every $\tau<\kappa$, there exists $\delta<\kappa$, such that $\ell_\delta=(\eta\restriction\tau,\xi\restriction\tau,\zeta\restriction\tau)$ is in $T$.
\end{enumerate}
Let $\phi_Q:= \exists X_2(\varphi_Q)$. Then $\phi_Q$ is a $\Sigma^1_1$-sentence involving predicate symbols $\mathbb A,\mathbb X_0,\mathbb X_1$ and $\epsilon$ for which the induced binary relation
$$R_{\phi_Q}:=\{(\eta,\xi)\in (\mathcal P(\kappa\times\kappa))^2\mid \langle \kappa,{\in},A,\eta,\xi\rangle \models \phi_Q\}$$
coincides with the quasi-order $Q$.
Now, appeal to Lemma~\ref{prop32} with $\phi_Q$ to receive the corresponding $\Pi^1_2$-sentences $\psi_{\reflexive}$ and $\psi_{\transitive}$.
Then, consider the following two $\Pi^1_2$-sentences:
\begin{itemize}
\item[$\bullet$] $\psi^0_Q:=\psi_{\reflexive}\land\psi_{\transitive}\land\phi_Q$, and
\item[$\bullet$] $\psi^1_Q:=\psi_{\reflexive}\land\psi_{\transitive}\land\neg(\phi_Q)$.
\end{itemize}

Let $\vec N=\langle N_\alpha\mid\alpha\in S\rangle$ be a  $\dl^*_S(\Pi^1_2)$-sequence.
Appeal to Proposition~\ref{Prop2.4} with the $\Pi^1_2$-sentence $\psi^1_Q$ to obtain a corresponding transversal $\langle \eta_\alpha\mid\alpha\in S\rangle\in\prod_{\alpha\in S}N_\alpha$.
Note that we may assume that, for all $\alpha\in S$, $\eta_\alpha\in{}^\alpha\alpha$, as this does not harm
the key feature of the chosen transversal.\footnote{For any $\alpha$ such that $\eta_\alpha$ is not a function from $\alpha$ to $\alpha$,
simply replace $\eta_\alpha$ by the constant function from $\alpha$ to $\{0\}$.}

For each $\eta\in\kappa^\kappa$, let
$$Z_\eta:=\{\alpha\in S\mid A\cap\alpha^5\text{ and }\eta\restriction\alpha\text{ are in }N_\alpha\}.$$
\begin{claim} Suppose $\eta\in\kappa^\kappa$. Then $S\setminus Z_\eta$ is nonstationary.
\end{claim}
\begin{proof} Fix primitive-recursive bijections $c:\kappa^2\leftrightarrow\kappa$ and $d:\kappa^5\leftrightarrow\kappa$.
Given $\eta\in\kappa^\kappa$, consider the club $D_0$ of all $\alpha<\kappa$ such that:
\begin{itemize}
\item[$\bullet$] $\eta[\alpha]\s\alpha$;
\item[$\bullet$] $c[\alpha\times\alpha]=\alpha$;
\item[$\bullet$] $d[\alpha\times\alpha\times\alpha\times\alpha\times\alpha]=\alpha$.
\end{itemize}

Now, as $c[\eta]$ is a subset of $\kappa$, by the choice $\vec N$, we may find a club $D_1\s\kappa$ such that, for all $\alpha\in D_1\cap S$,
$c[\eta]\cap\alpha\in N_\alpha$.
Likewise, we may find a club $D_2\s\kappa$ such that, for all $\alpha\in D_2\cap S$,
$d[A]\cap\alpha\in N_\alpha$.

For all $\alpha\in S\cap D_0\cap D_1\cap D_2$, we have
\begin{itemize}
\item[$\bullet$] $c[\eta\restriction\alpha]=c[\eta\cap(\alpha\times\alpha)]=c[\eta]\cap c[\alpha\times\alpha]=c[\eta]\cap\alpha\in N_\alpha$, and
\item[$\bullet$] $d[A\cap\alpha^5]=d[A]\cap d[\alpha^5]=d[A]\cap\alpha\in N_\alpha$.
\end{itemize}
As $N_\alpha$ is p.r.-closed, it then follows that $\eta\restriction\alpha$ and $A\cap\alpha^5$ are in $N_\alpha$.
Thus, we have shown that $S \setminus Z_\eta$ is disjoint from the club $D_0\cap D_1\cap D_2$.
\end{proof}
For all $\eta\in\kappa^\kappa$ and $\alpha\in Z_\eta$, let:
$$\mathcal{P}_{\eta,\alpha}:=\{p\in \alpha^\alpha\cap N_\alpha\mid  \langle\alpha,{\in},A\cap\alpha^5,p,\eta\restriction\alpha\rangle \models_{N_{\alpha}} \psi^0_Q \}.$$

Finally, define a function $f:\kappa^{\kappa} \rightarrow 2^\kappa$ by letting, for all $\eta\in\kappa^\kappa$ and $\alpha<\kappa$,
$$f(\eta)(\alpha) := \begin{cases}
1,& \text{if } \alpha\in Z_\eta\text{ and }\eta_\alpha\in \mathcal{P}_{\eta,\alpha};\\
0,             & \text{otherwise}.
\end{cases}
$$

\begin{claim} $f$ is $1$-Lipschitz.
\end{claim}
\begin{proof} Let $\eta,\xi$ be two distinct elements of $\kappa^{\kappa}$.
Let $\alpha\le\Delta(\eta,\xi)$ be arbitrary.

As $\eta\restriction\alpha=\xi\restriction\alpha$, we have $\alpha\in Z_\eta$ iff $\alpha\in Z_\xi$.
In addition, as $\eta\restriction\alpha=\xi\restriction\alpha$, $\mathcal P_{\eta,\alpha}=\mathcal P_{\xi,\alpha}$ whenever  $\alpha\in Z_\eta$.
Thus, altogether, $f(\eta)(\alpha)=1$ iff $f(\xi)(\alpha)=1$.
\end{proof}
\begin{claim}\label{c353} Suppose $(\eta,\xi)\in Q$. Then $f(\eta)\sqc Sf(\xi)$.
\end{claim}
\begin{proof} As $(\eta,\xi)\in Q$, let us fix $\zeta\in\kappa^\kappa$ such that, for all $\tau<\kappa$,
$(\eta\restriction\tau,\xi\restriction\tau,\zeta\restriction\tau)\in T$.
Define a function $g:\kappa\rightarrow\kappa$ by letting, for all $\tau<\kappa$,
$$g(\tau):=\min\{\delta<\kappa\mid \ell_\delta=(\eta\restriction\tau,\xi\restriction\tau,\zeta\restriction\tau)\}.$$
As $(S\setminus Z_\eta)$, $(S\setminus Z_\xi)$ and $(S\setminus Z_\zeta)$ are nonstationary,
let us fix a club $C\s\kappa$ such that $C\cap S\s Z_\eta\cap Z_\xi\cap Z_\zeta$.
Consider the club $D:=\{\alpha\in C\mid g[\alpha]\s\alpha\}$.
We shall show that, for every $\alpha\in D\cap S$, if $f(\eta)(\alpha)=1$ then $f(\xi)(\alpha)=1$.

Fix an arbitrary $\alpha\in D\cap S$ satisfying $f(\eta)(\alpha)=1$. In effect, the following three conditions are satisfied:
\begin{enumerate}[(1)]
\item $\langle\alpha,{\in},A\cap\alpha^5\rangle \models_{N_{\alpha}} \psi_{\reflexive}$,
\item $\langle\alpha,{\in},A\cap\alpha^5\rangle \models_{N_{\alpha}} \psi_{\transitive}$, and
\item $\langle\alpha,{\in},A\cap\alpha^5,\eta_\alpha,\eta\restriction\alpha\rangle \models_{N_{\alpha}} \phi_Q$.
\end{enumerate}

In addition, since $\alpha$ is a closure point of $g$, by definition of $\varphi_Q$, we have
$$\langle\alpha,{\in},A\cap\alpha^5,\eta\restriction\alpha,\xi\restriction\alpha,\zeta\restriction\alpha\rangle\models\varphi_Q.$$

As $\alpha\in S$ and $\varphi_Q$ is first-order,\footnote{$N_{\alpha}$ is transitive and rud-closed (in fact, p.r.-closed), so that  $N_{\alpha} \models \gj$ (see \cite[\S Other remarks on GJ]{Mathias}).
Now, by  \cite[\S The cure in $\gj$, proposition 10.31]{Mathias},  $\mathbf{Sat}$ is $\Delta_{1}^{\gj}$.}
$$\langle\alpha,{\in},A\cap\alpha^5,\eta\restriction\alpha,\xi\restriction\alpha,\zeta\restriction\alpha\rangle\models_{N_\alpha}\varphi_Q,$$
so that, by definition of $\phi_Q$,
$$\langle\alpha,{\in},A\cap\alpha^5,\eta\restriction\alpha,\xi\restriction\alpha\rangle\models_{N_\alpha}\phi_Q.$$
By combining the preceding with clauses (2) and (3) above, we infer that the following holds, as well:
\begin{enumerate}
\item[(4)] $\langle\alpha,{\in},A\cap\alpha^5,\eta_\alpha,\xi\restriction\alpha\rangle \models_{N_{\alpha}} \phi_Q$.
\end{enumerate}
Altogether, $f(\xi)(\alpha)=1$, as sought.
\end{proof}

\begin{claim} Suppose $(\eta,\xi)\in \kappa^\kappa\times\kappa^\kappa\setminus Q$. Then $f(\eta)\not\sqc{S}f(\xi)$.
\end{claim}
\begin{proof} As $(S\setminus Z_\eta)$ and $(S\setminus Z_\xi)$ are nonstationary,
let us fix a club $C\s\kappa$ such that $C\cap S\s Z_\eta\cap Z_\xi$.
As $Q$ is a quasi-order and $(\eta,\xi)\notin Q$, we have:
\begin{enumerate}[(1)]
\item $\langle\kappa,{\in},A\rangle \models\psi_{\reflexive}$,
\item $\langle\kappa,{\in},A\rangle \models\psi_{\transitive}$, and
\item $\langle\kappa,{\in},A,\eta,\xi\rangle \models \neg(\phi_Q)$.
\end{enumerate}
so that, altogether,
$$ \langle \kappa,{\in},A,\eta,\xi\rangle \models \psi^1_Q.$$

Then, by the choice of the transversal $\langle \eta_\alpha\mid\alpha\in S\rangle$, there is a stationary subset $S'\s S\cap C$ such that,
for all $\alpha\in S'$:
\begin{enumerate}[(1')]
\item $\langle\alpha,{\in},A\cap\alpha^5\rangle \models_{N_{\alpha}} \psi_{\reflexive}$,
\item $\langle\alpha,{\in},A\cap\alpha^5\rangle \models_{N_{\alpha}} \psi_{\transitive}$,
\item $\langle\alpha,{\in},A\cap\alpha^5,\eta\restriction\alpha,\xi\restriction\alpha\rangle \models_{N_{\alpha}} \neg(\phi_Q)$, and
\item $\eta_\alpha=\eta\restriction\alpha$.
\end{enumerate}

By Clauses (3') and (4'), we have that $\eta_\alpha\notin\mathcal P_{\xi,\alpha}$, so that $f(\xi)(\alpha)=0$.

By Clauses (1'), (2') and (4'), we have that $\eta_\alpha\in\mathcal P_{\eta,\alpha}$, so that $f(\eta)(\alpha)=1$.

Altogether, $\{ \alpha\in S\mid f(\eta)(\alpha)>f(\xi)(\alpha)\}$ covers the stationary set $S'$, so that  $f(\eta)\not\sqc{S}f(\xi)$.
\end{proof}

This completes the proof of Theorem~\ref{Sigmacompl}
\end{proof}

Theorem~B now follows as a corollary.
\begin{cor}\label{corollary36} Suppose that $\kappa$ is a regular uncountable cardinal and $\gch$ holds.
Then there is a set-size cofinality-preserving $\gch$-preserving notion of forcing $\mathbb P$,
such that, in $V^{\mathbb P}$, for every analytic quasi-order $Q$ over $\kappa^\kappa$
and every stationary $S\s\kappa$, ${Q}\redulo{\sqc S}$.
\end{cor}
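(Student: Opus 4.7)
The plan is straightforward: chain together the pieces already established in the paper. By Theorem~\ref{Sigmacompl}, it suffices to exhibit a cofinality-preserving and $\gch$-preserving forcing $\mathbb P$ such that, in $V^{\mathbb P}$, $\dl^*_S(\Pi^1_2)$ holds for every stationary $S\s\kappa$. Theorem~\ref{diamond_from_lcc} in turn reduces this to producing, in $V^{\mathbb P}$, a nice filtration $\vec M$ of $H_{\kappa^+}$ that witnesses $\lcc(\kappa,\kappa^+)$, is eventually slow at $\kappa$, and is accompanied by a $\Sigma_\omega$-formula defining (from a parameter contained in $\kappa$) a global well-order of $H_{\kappa^+}$.

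The choice of $\mathbb P$ will split according to the nature of $\kappa$. If $\kappa$ is inaccessible, I would let $\mathbb P$ be the forcing provided by Fact~\ref{InaccForcing}; this delivers the stronger conclusion $\lcc(\omega_1,\kappa^+)$ together with a $\Delta_1$-defined global well-order of $H_{\kappa^+}$, and then Lemma~\ref{slow}, applied with $\mu:=\omega_1$, yields the eventual slowness of $\vec M$ at $\kappa$. If $\kappa$ is a successor cardinal, I would instead let $\mathbb P$ be the forcing of Fact~\ref{forcing}; the resulting $\vec M$ witnesses $\lcc(\kappa,\kappa^+)$, comes with a $\Delta_1$-defined global well-order, and Lemma~\ref{MH2} supplies the eventual slowness. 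Both forcings are explicitly cofinality-preserving and $\gch$-preserving in their statements, so these properties transfer to $V^{\mathbb P}$. In particular, $\kappa^{<\kappa}=\kappa$ continues to hold in the extension, so the blanket hypothesis of Section~3 is in force.

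With $\vec M$ and the global well-order in hand in the generic extension, Theorem~\ref{diamond_from_lcc} applies uniformly, yielding $\dl^*_S(\Pi^1_2)$ for every stationary $S\s\kappa$. Feeding any such $S$ together with an analytic quasi-order $Q$ over $\kappa^\kappa$ into Theorem~\ref{Sigmacompl} produces the desired $1$-Lipschitz reduction $Q\redulo{\sqc S}$, completing the proof. There is no real obstacle here, only a case split; the substantive combinatorics was carried out in Sections~2 and~3. The only subtlety worth flagging is that Theorem~\ref{diamond_from_lcc} demands $\lcc(\kappa,\kappa^+)$, which is direct in the successor case from Fact~\ref{forcing} and is an immediate weakening of $\lcc(\omega_1,\kappa^+)$ in the inaccessible case of Fact~\ref{InaccForcing}; so a single uniform set of hypotheses is available in $V^{\mathbb P}$ regardless of which regularity class $\kappa$ falls into.
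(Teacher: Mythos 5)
Your proof is correct and takes exactly the same route as the paper: reduce via Theorem~\ref{Sigmacompl} and Theorem~\ref{diamond_from_lcc} to exhibiting a suitable nice filtration in a forcing extension, and then split into the inaccessible case (Fact~\ref{InaccForcing} plus Lemma~\ref{slow} with $\mu=\omega_1$) and the successor case (Fact~\ref{forcing} plus Lemma~\ref{MH2}). The paper records this as a one-line proof; your write-up merely spells out the same chain of citations, including the minor observation that $\lcc(\omega_1,\kappa^+)$ entails $\lcc(\kappa,\kappa^+)$ and that $\kappa^{<\kappa}=\kappa$ persists in the extension.
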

\begin{proof} This follows from Theorems \ref{diamond_from_lcc} and \ref{Sigmacompl}, and one of the following:

$\br$ If $\kappa$ is inaccessible, then we use Fact~\ref{InaccForcing} and Lemma~\ref{slow}.

$\br$ If $\kappa$ is a successor cardinal, then we use Fact~\ref{forcing} and Lemma~\ref{MH2}.
\end{proof}
\begin{remark} By combining the proof of the preceding with a result of L\"ucke \cite[Theorem~1.5]{MR2987148}, we arrive at following conclusion.
Suppose that $\kappa$ is an infinite successor cardinal and $\gch$ holds.
For every binary relation $R$ over $\kappa^\kappa$,
there is a set-size $\gch$-preserving $({<}\kappa)$-closed, $\kappa^+$-cc notion of forcing $\mathbb P_R$
such that, in $V^{\mathbb P_R}$,
the conclusion of Corollary~\ref{corollary36} holds,
and, in addition, $R$ is analytic.
\end{remark}

\begin{remark}A quasi-order $\unlhd$ over a space $X\in\{2^\kappa,\kappa^\kappa\}$ is said to be \emph{$\Sigma^1_1$-complete} iff it is analytic and,
for every analytic quasi-order $Q$ over $X$, there exists a $\kappa$-Borel function $f:X\rightarrow X$ reducing $Q$ to $\unlhd$.
As Lipschitz$\implies$continuous$\implies$$\kappa$-Borel, the conclusion of Corollary~\ref{corollary36} gives that each $\sqc S$ is a $\Sigma^1_1$-complete quasi-order.
Such a consistency was previously only known for $S$'s of one of two specific forms,
and the witnessing maps were not Lipschitz.
\end{remark}

\section{Concluding remarks}
\begin{remark} By \cite[Corollary 4.5]{HKM2}, in $L$, for every successor cardinal $\kappa$
and every theory (not necessarily complete)  $T$ over a countable relational language,
the corresponding equivalence relation $\cong_T$ over $2^\kappa$ is either $\Delta^1_1$ or $\Sigma^1_1$-complete.
This dissatisfying dichotomy suggests that $L$ is a singular universe, unsuitable for studying the correspondence between generalized descriptive set theory and model-theoretic complexities.
However, using Theorem~\ref{Sigmacompl}, it can be verified that the above dichotomy holds as soon as $\kappa$ is a successor of an uncountable cardinal $\lambda=\lambda^{<\lambda}$
in which $\dl^{*}_{S}(\Pi^1_2)$ holds for both $S:=\kappa\cap\cof(\omega)$ and $S:=\kappa\cap\cof(\lambda)$.
This means that the dichotomy is in fact not limited to $L$ and can be forced to hold starting with any ground model.
\end{remark}

\begin{remark}\label{rmk38} Let $=^S$ denote the symmetric version of $\sqc S$.
It is well known that, in the special case $S:=\kappa\cap\cof(\omega)$,
$=^S$ is a $\kappa$-Borel$^*$ equivalence relation \cite[\S6]{MR1242054}.
It thus follows from Theorem~\ref{Sigmacompl} that if $\dl^*_S(\Pi^1_2)$ holds for $S:=\kappa\cap\cof(\omega)$,
then the class of $\Sigma^1_1$ sets coincides with the class of $\kappa$-Borel$^*$ sets.
Now, as the proof of {\cite[Theorem~3.1]{HK18}} establishes that the failure of the preceding is consistent with, e.g., $\kappa=\aleph_2=2^{2^{\aleph_0}}$,
which in turn, by \cite[Lemma~2.1]{MR485361}, implies that $\diamondsuit^*_S$ holds,
we infer that the hypothesis $\dl^*_S(\Pi^1_2)$ of Theorem~\ref{Sigmacompl} cannot be replaced by $\diamondsuit^*_S$.
We thus feel that we have identified the correct combinatorial principle behind a line of results that were previously obtained under the heavy hypothesis of ``$V=L$''.
\end{remark}

\section*{Acknowledgements}
This research was partially supported by the European Research Council (grant agreement ERC-2018-StG 802756). The third author was also partially supported by the Israel Science Foundation (grant agreement 2066/18).

The main results of this paper were presented by the second author at the \emph{4th Arctic Set Theory} workshop, Kilpisj\"arvi, January 2019,
by the third author at the \emph{50 Years of Set Theory in Toronto} conference, Toronto, May 2019,
and by the first author at the \emph{Berkeley conference on inner model theory}, Berkeley, July 2019.
We thank the organizers for the invitations.

\end{document}